\documentclass[11pt,reqno]{amsart}

\usepackage[T1]{fontenc}
\usepackage[utf8]{inputenc}
\usepackage{lmodern}
\usepackage{microtype}
\usepackage{amsmath,amssymb,amsthm,mathtools}
\usepackage{mathrsfs}
\usepackage{enumitem}
\usepackage{booktabs}
\usepackage[pdfusetitle,breaklinks,unicode,bookmarks=false]{hyperref}
\usepackage[nameinlink,capitalise]{cleveref}
\usepackage[a4paper,margin=31mm]{geometry}
\usepackage{bm}
\usepackage{tikz}
\usetikzlibrary{arrows.meta,shapes.geometric,positioning,calc}
\hypersetup{
 colorlinks=true,
 linkcolor=blue,
  citecolor=blue,
  urlcolor=blue
}

\newtheorem{theorem}{Theorem}[section]
\newtheorem{proposition}[theorem]{Proposition}
\newtheorem{lemma}[theorem]{Lemma}
\newtheorem{corollary}[theorem]{Corollary}

\theoremstyle{definition}
\newtheorem{definition}[theorem]{Definition}
\newtheorem{example}[theorem]{Example}
\newtheorem{problem}[theorem]{Problem}
\theoremstyle{remark}
\newtheorem{remark}[theorem]{Remark}
\newcommand{\D}{\mathbb D}
\newcommand{\C}{\mathbb C}
\newcommand{\R}{\mathbb R}
\newcommand{\Aut}{\operatorname{Aut}}
\newcommand{\diam}{\operatorname{diam}}
\newcommand{\rad}{\operatorname{rad}}

\newcommand{\arctanh}{\operatorname{arctanh}}
\newcommand{\arcsinh}{\operatorname{arcsinh}}

\newcommand{\conv}{\operatorname{conv}}

\newcommand{\cA}{c_{\mathrm{aff}}}
\newcommand{\rA}{r_{\mathrm{aff}}}
\newcommand{\dA}{d_{\mathrm{aff}}}
\newcommand{\kA}{k_{\mathrm{aff}}}
\newcommand{\KA}{K_{\mathrm{aff}}}
\newcommand{\qA}{q_{\mathrm{aff}}}
\newcommand{\Bal}{\mathscr B}
\newcommand{\SH}{\mathcal S_H}
\newcommand{\SHo}{\mathcal S_H^0}

\newcommand{\norm}[1]{\left\lVert #1\right\rVert}

\newcommand{\ip}[2]{\left\langle #1,#2\right\rangle}
\numberwithin{equation}{section}

\title[Affine balancing of complex dilatations]
{Affine balancing of complex dilatations: affine invariants and optimal distortion of harmonic quasiconformal mappings}

\author[Zhi-Gang Wang and Deguang Zhong
]{Zhi-Gang Wang$^*$ and Deguang Zhong  
}

\address{\noindent Zhi-Gang Wang  \vskip.05in
School of Mathematics and Statistics, and Hunan Provincial University Key Laboratory for Big Data Analysis and Application, Hunan First Normal University,
Changsha 410205, Hunan, P. R. China.}
\email{\textcolor[rgb]{0.00,0.00,0.84}{sjyzhigangwang$@$hnfnu.edu.cn}}

\address{\noindent Deguang Zhong\vskip.05in
Institute of Applied Mathematics, Shenzhen Polytechnic University,
Shenzhen 518055, Guangdong,   P. R.
China.}
\email{\textcolor[rgb]{0.00,0.00,0.84}{zhongdg1014$@$szpu.edu.cn}}

\subjclass[2020]{Primary 30C55, 30C62; Secondary 31A05, 51M10.}
\keywords{Harmonic mappings, quasiconformal mappings, complex dilatation, affine invariance.}
\thanks{$^*$Corresponding author.}
\begin{document}

\begin{abstract}
For orientation-preserving planar harmonic mappings, postcomposition by real-affine mappings induces automorphisms on the space of complex dilatations. Motivated by this fact and the lack of natural affine-invariant geometric quantities in classical distortion theory, we introduce the affine circumradius and affine diameter associated with the image of the complex dilatation. The exponentiated affine circumradius gives a characterization of minimal quasiconformal distortion under affine normalization. Optimal affine balancing is unique up to similarity and yields canonical harmonic mappings with centrally symmetric dilatation. Using a three-point support principle and a sharp hyperbolic Jung theorem, we establish universal sharp two-sided bounds for these invariants and provide their pseudohyperbolic reformulations. For canonically balanced mappings, we prove sharp second-order estimates for pre-Schwarzian derivatives, showing that affine normalization cancels leading-order conformal discrepancies. We develop an affine-stable hierarchical classification for families of harmonic quasiconformal mappings, including normality and an invariant ordering. This framework unifies extremal distortion problems and connects classical analytic function theory with the distortion theory of harmonic quasiconformal mappings.
\end{abstract}

\maketitle


\section{Introduction and statements of the main results}

In the theory of planar harmonic quasiconformal mappings, two fundamental concepts exhibit an inherent mismatch. Quasiconformality is quantified by the pointwise modulus of the complex dilatation, while the classical theory of affine and linear invariant families of harmonic mappings is constructed to be stable under real-affine transformations of the target domain. Postcomposition with such real-affine mappings preserves both harmonicity and univalence, but the standard quasiconformal constant fails to be invariant under these operations. This incompatibility poses a fundamental obstacle to constructing affine-invariant distortion measures for harmonic quasiconformal mappings. Existing distortion estimates for harmonic mappings generally either fix the target domain or abandon affine invariance, leaving this gap largely unaddressed; for representative examples, see \cite{ChuaquiHernandezMartin2017,SheilSmall1990}. While affine and linear invariant families of harmonic mappings have been studied extensively, no systematic study has been undertaken of affine-invariant distortion measures via optimization over the real-affine orbit of the complex dilatation.

To resolve this difficulty, we introduce two geometric invariants for harmonic quasiconformal mappings: the affine circumradius and affine diameter, defined via the hyperbolic geometry of the image of the complex dilatation. We develop the theory of affine balancing, including sharp geometric bounds, pre-Schwarzian derivative estimates, and affine-stable mapping families. This framework describes the extremal distortion phenomena under real-affine postcomposition and establishes a new normalization scheme for harmonic quasiconformal distortion theory.

We begin by recalling some fundamental background. Let
\(
  f=h+\overline g
\)
be a locally univalent orientation-preserving harmonic mapping in the unit disk $\D$.  Then $h$ and $g$ are analytic, $h'\ne0$, and
\(
  \omega_f={g'}/{h'}:\D \rightarrow\D
\)
is analytic.  If $\norm{\omega_f}_\infty=k<1$, then the differential of $f$ has pointwise eccentricity at most
\(
  K={(1+k)}/{(1-k)}.
\)
When $f$ is globally univalent, it is a $K$-quasiconformal homeomorphism onto its image.  This is the standard harmonic formulation of planar quasiconformality; see, for instance, Ahlfors~\cite{Ahlfors2006}, Lehto-Virtanen~\cite{LehtoVirtanen1973}, Duren~\cite{Duren2004}, and the references therein.  For comprehensive background on quasiconformal and quasiregular mappings and their applications, including the higher-dimensional setting, see Gehring-Hag~\cite{GehringHag2012}, Gehring-Martin-Palka~\cite{GehringMartinPalka2017}, 
and Rickman~\cite{Rickman1993}.

An orientation‑preserving real‑affine mapping of the plane has the form
\begin{equation}\label{eq:A-def}
 A(w)=aw+b\overline w+c,
 \quad a,b,c\in\C,
 \quad |a|>|b|.
\end{equation}
Now postcompose $f$ with such a mapping $A$.
The new mapping $A\circ f$ is harmonic, and its complex dilatation is
\begin{equation}\label{eq:affine-action-intro}
 \omega_{A\circ f}
 =\frac{\overline a\,\omega_f+\overline b}{b\,\omega_f+a}.
\end{equation}
The right-hand side is a M\"obius automorphism of \(\mathbb D\). A special case of this transformation rule appears in Chuaqui-Hern\'andez-Mart\'in~\cite[pp. 1106-1107]{ChuaquiHernandezMartin2017}. 
Consequently,
\(\|\omega_f\|_\infty\) depends on the chosen affine gauge, whereas all intrinsic hyperbolic
quantities associated with \(\omega_f(\mathbb D)\) are invariant under target-affine postcompositions.
While formula \eqref{eq:affine-action-intro} is traditionally regarded as a purely algebraic identity, Ivanov ~\cite{Ivanov2017}
gave an intrinsic geometric interpretation of complex dilatation: each dilatation
value corresponds to a point in the hyperbolic space of planar conformal structures,
with real linear transformations acting on this space as hyperbolic isometries. This
geometric picture accounts for the automorphic behavior of \(\omega_{A\circ f}\) and motivates our core dichotomy:
extrinsic Euclidean quantities such as \(\|\omega_f\|_\infty\), versus intrinsic hyperbolic invariants
of the image \(\omega_f(\mathbb D)\). This dichotomy underpins the global optimization problem
investigated in the present paper.

The purpose of this paper is to develop the consequences of this observation at the level of the whole dilatation range, rather than at one distinguished point. Classical affine normalization usually fixes \(\omega_f(0)=0\);
this is fundamental in the work of Sheil-Small~\cite{SheilSmall1990} and in the modern theory of affine and linear invariant families (ALIF). Drawing on the stability theory for analytic and harmonic functions~\cite{HernandezMartin2013}, Chuaqui-Hernández-Martín ~\cite{ChuaquiHernandezMartin2017} gave a systematic treatment of harmonic ALIF.
Our normalization is global: it moves the hyperbolic circumcenter of the entire set $\overline{\omega_f(\D)}$ to the origin.  The resulting representative minimizes the quasiconformal distortion over the complete affine orbit.

The systematic study of harmonic univalent mappings was initiated by Clunie-Sheil-Small~\cite{ClunieSheilSmall1984}; see also Hengartner-Schober~\cite{HengartnerSchober1987} and Duren's monograph~\cite{Duren2004}.  Linear invariant families in the analytic setting go back to Pommerenke~\cite{Pommerenke1964,Pommerenke1975}.  Sheil-Small~\cite{SheilSmall1990} developed the harmonic affine-linear theory, and generalized harmonic Koebe mappings provide extremals for a number of its distortion problems~\cite{FerradaMartin2016}.  Chuaqui-Hern\'andez-Mart\'in~\cite{ChuaquiHernandezMartin2017} connected the order of affine-linear invariant families to the harmonic Schwarzian.  The harmonic pre-Schwarzian and Schwarzian in the general locally univalent setting were developed by 
Hern\'andez-Mart\'in~\cite{HernandezMartin2015}; see also Graf~\cite{Graf2016}, Liu-Ponnusamy~\cite{LiuPonnusamy2018}, and the applications of the harmonic pre-Schwarzian \cite{ChenPonnusamy2022,LiuPonnusamy2019}.  Earlier analytic pre-Schwarzian estimates tied to the Poincar\'e metric go back to Osgood~\cite{Osgood1982}, while Becker's criterion~\cite{Becker1972} illustrates the classical role of pre-Schwarzian control in quasiconformal extension theory.  Harmonic Schwarzian criteria and two-point distortion phenomena were developed by Chuaqui-Duren-Osgood~\cite{ChuaquiDurenOsgood2007,ChuaquiDurenOsgood2010}.  Recent work on harmonic quasiconformal mappings with bounded Schwarzian norm continues this line of investigation~\cite{WangWangRasilaQiu2024}; it was devoted to the construction of concrete extremal model mappings related to Pavlović’s open problem~\cite{p}. By contrast, the present work builds up a general theoretical framework, and those earlier extremal objects arise as special cases of the theory developed here. These works suggest that affine invariance is more than a peripheral symmetry; it may be regarded as one of the organizing themes within harmonic geometric function theory.

The geometric input in the present paper comes from the hyperbolic disk.  
We use the Poincar\'e metric on the unit disk \(\mathbb D\), which has constant Gaussian curvature \(-1\). The corresponding hyperbolic distance is defined by
\begin{equation}\label{eq:hyp-distance-intro}
 d_{\D}(z_1,z_2)
 =2\arctanh\left|\frac{z_1-z_2}{1-\overline{z_2}\, z_1}\right|.
\end{equation}
For a bounded set $E\subset\D$, write $\rad_{\D}(E)$ for its hyperbolic circumradius and $\diam_{\D}(E)$ for its hyperbolic diameter.  The hyperbolic plane is a complete CAT$(-1)$, hence CAT$(0)$ space, so a bounded set admits a unique circumcenter; see Bridson-Haefliger~\cite{BridsonHaefliger1999}.  The sharp relation between circumradius and diameter is the hyperbolic Jung theorem of Dekster~\cite{Dekster1995}, with a broader CAT-type extension in~\cite{Dekster1997}.  Closely related extremal questions for diameter, width, and constant-width bodies in spaces of constant curvature are studied by B\"or\"oczky-Sagmeister~\cite{BoroSag2020,BoroSag2022}.  The Jung radius has also appeared naturally in recent geometric function theory; see Nasser-Rainio-Vuorinen~\cite{NasserRainioVuorinen2022}.

For a harmonic quasiconformal mapping $f$, let \(\omega_f\) denote its complex dilatation.
Associated with the image set $\omega_f(\mathbb D)$ are two hyperbolic‑geometric quantities:
the affine radius \(\rA(f)\) and the affine diameter \(\dA(f)\), which will be formally defined in Definition \ref{def:aff-invariants} below.
The central invariant of the paper is
\(K_{\rm aff}(f)=e^{\rA(f)}.\)

Our first main result is the following exact minimization principle.

\begin{theorem}
\label{thm:intro-optimal}
Let $f=h+\overline g$ be an orientation-preserving harmonic mapping in $\D$ with $\norm{\omega_f}_\infty<1$.  Then
\(
 \KA(f)
 =\inf_A K(A\circ f),
\)
where the infimum is over all orientation-preserving real-affine mappings of the target and $K(A\circ f)$ denotes the maximal differential eccentricity of $A\circ f$.  The infimum is attained.  The minimizing affine mappings are precisely those whose induced disk automorphism sends the unique hyperbolic circumcenter of $\overline{\omega_f(\D)}$ to the origin.  Any two minimizing affine mappings differ by an orientation-preserving Euclidean similarity.  If $f$ is globally univalent, the common minimum is the least quasiconformal constant among all orientation-preserving target-affine postcompositions of $f$.
\end{theorem}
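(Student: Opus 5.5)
The plan is to reduce everything to a statement about Möbius automorphisms of $\D$ acting on the compact set $E=\overline{\omega_f(\D)}$, which lies in a closed disk of radius $\norm{\omega_f}_\infty<1$ and is therefore hyperbolically bounded. Presumably Definition~\ref{def:aff-invariants} sets $\rA(f)=\rad_{\D}(E)$; I will take this as given.

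\emph{Step 1: the eccentricity as a hyperbolic radius.} Since $t\mapsto (1+t)/(1-t)$ is increasing, $K(A\circ f)=(1+k_A)/(1-k_A)$ with $k_A=\sup_{\D}|\omega_{A\circ f}|$. Moreover $d_{\D}(0,w)=2\arctanh|w|=\log\frac{1+|w|}{1-|w|}$, so
\[
\log K(A\circ f)=\sup_{w\in E} d_{\D}\bigl(0,\phi_A(w)\bigr)=\sup_{w\in E} d_{\D}\bigl(\phi_A^{-1}(0),w\bigr),
\]
where $\phi_A(w)=(\overline a w+\overline b)/(bw+a)$ is the automorphism from \eqref{eq:affine-action-intro}, and the second equality uses that $\phi_A$ is a hyperbolic isometry.

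\emph{Step 2: surjectivity of $A\mapsto\phi_A$.} Every automorphism of $\D$ has the form $e^{i\theta}(w-p)/(1-\overline p w)$. Writing it as $(\overline a w+\overline b)/(bw+a)$ with $|a|>|b|$ works by choosing $a=e^{-i\theta/2}$ and $b=-\overline p\,e^{i\theta/2}$; one checks that then $\overline b=-p\,e^{-i\theta/2}$, as required. Hence $\phi_A^{-1}(0)$ ranges over all of $\D$ as $A$ varies, and
\[
\inf_A \log K(A\circ f)=\inf_{p\in\D}\sup_{w\in E} d_{\D}(p,w)=\rad_{\D}(E).
\]
Exponentiating gives $\KA(f)=\inf_A K(A\circ f)$.

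\emph{Step 3: attainment and uniqueness of the minimizer.} The infimum over $p$ is attained at the unique circumcenter $c$ of $E$, which exists because $\D$ is CAT$(-1)$ (Bridson--Haefliger). So $A$ is minimizing exactly when $\phi_A^{-1}(0)=c$, that is, $\phi_A(c)=0$. Two automorphisms sending $c$ to $0$ differ by postcomposition with a rotation $w\mapsto e^{i\alpha}w$.

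It remains to identify the kernel and rotations in terms of affine maps:
\begin{itemize}[label={}]
\item $\phi_A=\mathrm{id}$ forces $b=0$ and $\overline a/a=1$, so $a$ is real and positive; this includes translations.
\item A rotation $w\mapsto e^{i\alpha}w$ is realized by $A(w)=e^{-i\alpha/2}w$.
\item One must also check composition: $\phi_{A_1\circ A_2}=\phi_{A_1}\circ\phi_{A_2}$, which follows directly from \eqref{eq:affine-action-intro} applied twice.
\end{itemize}
Consequently, if $A_1$ and $A_2$ are both minimizing, then $\phi_{A_1\circ A_2^{-1}}$ is a rotation. Hence $A_1\circ A_2^{-1}$ equals $e^{-i\alpha/2}w$ composed with an element of the kernel, i.e.\ $w\mapsto \lambda w+c'$ with $\lambda\neq 0$. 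This is an orientation-preserving similarity.

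\emph{Step 4: the univalent case.} If $f$ is univalent, $A\circ f$ is a homeomorphism whose quasiconformal constant equals its maximal pointwise eccentricity $K(A\circ f)$. The final sentence of the theorem then follows from Steps 2 and 3.

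The main obstacle is only bookkeeping. One must verify that the group map is a homomorphism in the right order (postcomposition $A\circ f$ yields $\phi_A\circ\omega_f$), and that the sup over $\D$ equals the max over the closure $E$, which holds by continuity of $d_{\D}$.
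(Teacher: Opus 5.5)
Your proposal is correct and follows essentially the same route as the paper. You express $\log K(A\circ f)$ as the radius function of $E_f$ evaluated at $\phi_A^{-1}(0)$, then use that $\phi_A^{-1}(0)$ sweeps out all of $\D$ together with uniqueness of the circumcenter. Finally you show that an affine map whose induced automorphism fixes $0$ is a similarity. The paper does this last step in one line via $T_C(0)=\overline\beta/\alpha=0$, whereas you pass through the kernel of $A\mapsto\phi_A$.

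There are two harmless slips. First, to realize $e^{i\theta}(w-p)/(1-\overline p w)$ you need $b=-\overline p\,e^{-i\theta/2}$. Your choice gives $(e^{i\theta}w-p)/(1-\overline p e^{i\theta}w)$ instead, which still suffices because $\phi_A^{-1}(0)=e^{-i\theta}p$ still ranges over $\D$. Second, the kernel consists of the maps $w\mapsto aw+c$ with $a$ real and nonzero, not necessarily positive (e.g.\ $A(w)=-w$ induces the identity); this does not affect the similarity conclusion.
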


\begin{remark}
Theorem \ref{thm:intro-optimal} provides an affine analogue of the classical extremal quasiconformal principle: minimizing the maximal differential eccentricity under target real-affine postcomposition is equivalent to placing the hyperbolic circumcenter of \(\omega_f(\mathbb D)\) at the origin, with uniqueness up to orientation-preserving Euclidean similarity. Unlike classical Teichm\"uller extremal theory, where extremality is governed by holomorphic quadratic differentials modulo M\"obius equivalence, our condition is formulated in terms of the hyperbolic geometry of the image of the complex dilatation. It yields an intrinsic canonical affine normalization for harmonic quasiconformal mappings.
\end{remark}

\begin{figure}[htbp]
\centering
\begin{tikzpicture}[scale=2.2]
\draw[->] (-1.3,0) -- (1.3,0) node[right] {$x$};
\draw[->] (0,-1.3) -- (0,1.3) node[above] {$y$};
\node at (0,0) [below left] {$0$};
\draw[gray!60,dashed] (0,0) circle (1.1);
\draw[blue,thick] plot[smooth cycle] coordinates{
(0.90, 0.10)
(0.45, 0.82)
(-0.40, 0.65)
(-0.80, 0.00)
(-0.55,-0.70)
(0.30,-0.80)
(0.85,-0.25)
};
\node[blue] at (0.35,0.66) {$\omega_f(\mathbb{D})$};
\coordinate (chyp) at (0.32,0.22);
\fill[red] (chyp) circle (0.028);
\node[red,above=2pt] at (chyp) {${\rm c}_{\mathrm{hyp}}$};
\draw[black,thick,->] (chyp) to[out=200,in=20] (0,0);
\draw[green!70!black,dashed,thick]
  (chyp) to[out=45,in=170] (0.90,0.10);
\node[black!70!black,below=3pt] at (0.60,0.26) {$\mathrm{rad}_\mathbb{D}$};
\end{tikzpicture}
\caption{Illustration for Theorem~\ref{thm:intro-optimal}. The point ${\rm c}_{\mathrm{hyp}}$ denotes its hyperbolic center.
The dashed arc represents the hyperbolic radius $\mathrm{rad}_\mathbb{D}$, emanating from ${\rm c}_{\mathrm{hyp}}$ to the boundary of $\omega_f(\mathbb{D})$.
The arrowed curve schematically illustrates the displacement from the hyperbolic center \({\rm c}_{\mathrm{hyp}}\) of \(\omega_f(\mathbb{D})\) to the origin.}
\end{figure}

Thus $\log\KA(f)$ is not merely comparable to an affine optimization problem; it is the value of that problem.  In particular, $f$ is called balanced when the circumcenter is $0$.  In that gauge
\[
 \norm{\omega_f}_\infty
 =\tanh\frac{\rA(f)}2,
\]
so its maximal differential distortion equals $\KA(f)$; if the mapping is univalent, this is its quasiconformal constant.

A second feature is finite support. In the Euclidean plane, the smallest enclosing circle of a compact set is determined by two or three contact points. The same principle persists in the hyperbolic disk.

\begin{theorem}
\label{thm:intro-support}
Let $f$ be as in Theorem \ref{thm:intro-optimal}, let $c=\cA(f)$ be the circumcenter of $\overline{\omega_f(\D)}$, and put $R=\rA(f)$.  If $R>0$, there exist $m\in\{2,3\}$ and points
\(
 \xi_1,\ldots,\xi_m\in\overline{\omega_f(\D)}
\)
with $d_{\D}(c,\xi_j)=R$ such that $c$ is already the circumcenter of $\{\xi_1,\ldots,\xi_m\}$.  If $m=2$, then $c$ is the hyperbolic midpoint of $\xi_1$ and $\xi_2$.  If $m=3$, the outward unit tangent vectors at $c$ toward the $\xi_j$ have the origin in their Euclidean convex hull in $T_c\D$.
\end{theorem}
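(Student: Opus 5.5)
Let $E=\overline{\omega_f(\D)}$; since $\norm{\omega_f}_\infty<1$, $E$ is a compact subset of $\D$. By Theorem~\ref{thm:intro-optimal} and the affine action \eqref{eq:affine-action-intro}, we may apply a disk automorphism (an isometry of $d_{\D}$). This preserves circumcenters, radii, midpoints and the angle data at $c$, so we may assume $c=0$. Then $E$ lies in the closed Euclidean disk $|w|\le t$ with $t=\tanh(R/2)$. Put $S=\{\xi\in E: d_{\D}(0,\xi)=R\}=E\cap\{|w|=t\}$; it is compact and nonempty by the definition of $R$ as a maximum over the compact set $E$. Hyperbolic geodesics through $0$ are Euclidean diameters, so the outward unit tangent at $0$ toward $\xi\in S$ is simply $u(\xi)=\xi/|\xi|$.

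The key step is to show $0\in\conv\{u(\xi):\xi\in S\}$. Suppose not. By Euclidean separation there is a unit vector $v$ with $\langle u(\xi),v\rangle\ge\delta>0$ for all $\xi\in S$. Move the center slightly, to $p_s=sv$ for small $s>0$. For the distance $F(p)=d_{\D}(p,\xi)$, the first variation formula in the Riemannian hyperbolic plane gives $\frac{d}{ds}\big|_{0}d_{\D}(sv,\xi)=-\langle u(\xi),v\rangle$ for $\xi\neq0$. This holds uniformly on a neighbourhood of $S$ in $E$, by continuity of $u$ and smoothness of $d_{\D}$ away from the diagonal. On the complement of that neighbourhood, $d_{\D}(0,\xi)\le R-\eta$, and continuity keeps these distances below $R$ for small $s$. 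Hence $\max_{\xi\in E}d_{\D}(sv,\xi)<R$ for small $s$, contradicting the minimality of $R$.

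Next apply Carathéodory's theorem in $T_0\D\cong\R^2$. Since $0$ lies in the convex hull of the unit vectors $u(S)$, it lies in the hull of at most three of them. It cannot be one vector, since these have norm $1$. If it is two, they are antipodal: $\xi_2=-\xi_1 t/t$, that is, $\xi_2=-\xi_1$, and $0$ is the hyperbolic midpoint of $\xi_1,\xi_2$ (m=2). Otherwise we take three vectors with $0$ in their hull (m=3).

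It remains to show that $0$ is the circumcenter of $F=\{\xi_1,\dots,\xi_m\}$. The circumradius of $F$ is at most $R$, because every $\xi_j$ is at distance $R$ from $0$. Conversely, for any $p\neq0$, write $p=\exp_0(r v)$. The function $\phi_j(r)=d_{\D}(\exp_0(rv),\xi_j)$ is convex along the geodesic (CAT$(0)$) and has $\phi_j'(0)=-\langle u_j,v\rangle$. Since $0=\sum\lambda_ju_j$ with $\lambda_j\ge0$, some $j$ has $\langle u_j,v\rangle\le0$. Then $\phi_j$ is nondecreasing, so $d_{\D}(p,\xi_j)\ge R$. So $R$ is the circumradius of $F$ and $0$ is a center; uniqueness of circumcenters in CAT$(0)$ spaces makes it the circumcenter.

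The main obstacle is the uniform first-variation argument in the key step. It needs $S$ to stay away from $0$, which holds because $R>0$, and a careful compactness split between points near $S$ and points far from $S$. Everything else is finite-dimensional convexity.
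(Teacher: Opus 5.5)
Your proposal is correct and follows essentially the paper's proof. Both arguments run through the same steps: a first-variation contradiction puts $0$ in the convex hull of the contact directions, Carath\'eodory's theorem in $T_c\D$ bounds $m$ by three, antipodal directions give the midpoint when $m=2$, and CAT$(0)$ convexity of $t\mapsto d_{\D}(\gamma(t),\xi_j)$ shows that $c$ is the circumcenter of the selected points. Your direct form of the last step needs only $\lambda_j\ge0$, so it covers $m=2$ and $m=3$ together. One slip is cosmetic: with the Euclidean parametrization $p_s=sv$ the derivative is $-2\langle u(\xi),v\rangle$, because the Poincar\'e metric at $0$ is $2|dz|$, but this does not affect the sign argument.
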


\begin{figure}[htbp]
\centering
\begin{tikzpicture}[scale=2.99]
\draw[->] (-1.2,0)--(1.2,0) node[right] {$x$};
\draw[->] (0,-1.2)--(0,1.2) node[above] {$y$};
\draw[thick,dashed,red] (0,0) circle (0.66);
\node at (0,0) [below left] {$0$};
\fill[blue] (0.65,0) circle (0.5pt) node[right] {$p_1$};
\fill[blue] (-0.325,0.563) circle (0.5pt) node[above left] {$p_2$};
\fill[blue] (-0.325,-0.563) circle (0.5pt) node[below left] {$p_3$};
\draw[thick,blue!70!black] plot[smooth cycle] coordinates{ (0.65,0) (-0.325,0.563) (-0.325,-0.563) (0.2,-0.15) };
\node[green!70!black] at (0.40, 0.37) {$\Omega$};
\fill[black] (0,0) circle (0.5pt); 
\end{tikzpicture}
\caption{Three‑point support extremal configuration in the hyperbolic unit disc. For the extremal set of Theorem
\ref{thm:intro-support}, three boundary points $p_1,p_2,p_3$ lie on the hyperbolic circumcircle of $\Omega$.}
\label{fig:three_point_support}
\end{figure}

Combining the minimization principle with Dekster's sharp hyperbolic Jung theorem gives a distortion theorem that depends only on the pairwise oscillation of the dilatation.

\begin{theorem}
\label{thm:intro-jung}
For every orientation-preserving harmonic mapping $f$ in $\D$ with $\norm{\omega_f}_\infty<1$,
\begin{equation}\label{eq:intro-jung}
 \frac{\dA(f)}{2}
 \le \rA(f)
 \le
 J\bigl(\dA(f)\bigr),
 \quad
 J(t)=\arcsinh\!\left(\frac{2}{\sqrt3}\sinh\frac t2\right).
\end{equation}
Both inequalities are sharp within the class of globally univalent harmonic quasiconformal mappings.
\end{theorem}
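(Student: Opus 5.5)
The plan is to reduce everything to the hyperbolic geometry of the compact set $E=\overline{\omega_f(\D)}\subset\D$, where by definition $\rA(f)=\rad_{\D}(E)$ and $\dA(f)=\diam_{\D}(E)$. Boundedness of $E$ in the hyperbolic metric follows from $\norm{\omega_f}_\infty<1$.

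The lower bound $\dA/2\le\rA$ is the triangle inequality. Take the circumcenter $c$. For any $\xi,\eta\in E$,
\[
d_{\D}(\xi,\eta)\le d_{\D}(\xi,c)+d_{\D}(c,\eta)\le 2R .
\]

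For the upper bound I would invoke Dekster's sharp hyperbolic Jung theorem in curvature $-1$. In dimension two it says that a set of diameter $D$ has circumradius at most the circumradius of the equilateral hyperbolic triangle of side $D$. So the main computation is to check that this circumradius equals $J(D)$.

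Place the centroid at $0$ and the vertices at $\tanh(\rho/2)e^{2\pi i k/3}$. The hyperbolic law of cosines at the center, with angle $2\pi/3$, gives
\[
\cosh D=\cosh^2\rho+\tfrac12\sinh^2\rho=1+\tfrac32\sinh^2\rho .
\]
Since $\cosh D-1=2\sinh^2(D/2)$, this becomes $\sinh^2(D/2)=\tfrac34\sinh^2\rho$, that is, $\rho=\arcsinh\bigl(\tfrac{2}{\sqrt3}\sinh\tfrac D2\bigr)=J(D)$.

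If one prefers to avoid quoting Dekster in full, one can argue directly via Theorem \ref{thm:intro-support}. If $m=2$, then $\dA\ge d_{\D}(\xi_1,\xi_2)=2R$, and $2R\ge 2J^{-1}(R)$ gives the bound. If $m=3$, the tangent directions have $0$ in their convex hull, so some pair of directions makes an angle $\theta\ge2\pi/3$. The law of cosines then gives
\[
\cosh d(\xi_i,\xi_j)=\cosh^2R-\sinh^2R\cos\theta\ge1+\tfrac32\sinh^2R .
\]
Hence $\dA\ge J^{-1}(R)$, and since $J$ is increasing, $R\le J(\dA)$. This shortcut is cleaner, and I would present it as the proof, with Dekster's theorem cited for context.

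For sharpness we need globally univalent harmonic quasiconformal $f$ whose dilatation closure realizes each extremal configuration. The key tool is a univalence criterion: take $h$ convex (for example $h(z)=z$) and $g'=\omega h'$ with $\norm{\omega}_\infty<1$. Then $f=z+\overline{g(z)}$ with $|g'|<1$ is univalent, because $|g(z_1)-g(z_2)|<|z_1-z_2|$. So any analytic $\omega:\D\to\D$ with compact closure of its image inside $\D$ can be used.

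For the lower bound, choose $\omega$ mapping $\D$ onto the interior of the closed hyperbolic disk of radius $R$ centered at $0$, namely $\omega(z)=\tanh(R/2)\,z$. Then $E$ is that closed disk, so $\rA=R$ and $\dA=2R$.

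For the upper bound, choose $\omega$ as a Riemann map onto the interior of the hyperbolic equilateral triangle with vertices $\tanh(\rho/2)e^{2\pi ik/3}$. Then $E$ is the closed triangle. Its diameter is the side length, because the diameter of a geodesically convex polygon is attained at vertices. Its circumcenter is $0$ with radius $\rho$, by the three-point criterion of Theorem \ref{thm:intro-support} or by symmetry. Thus $\rA=J(\dA)$.

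The step I expect to need the most care is this sharpness construction. One must confirm that the closure of the image is exactly the triangle, and that the circumcenter of the triangle is its center; both follow from symmetry and the uniqueness of the circumcenter. The univalence argument via $|g'|<1$ is routine.
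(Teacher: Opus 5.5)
Your lower bound and both sharpness constructions are the paper's. These are the realization $f=z+\overline{g}$ with $g'=\omega$, applied to $\omega(z)=\tanh(R/2)\,z$ and to a Riemann map onto an equilateral hyperbolic triangle.

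The real difference is the upper bound. The paper cites Dekster's theorem as a black box, checking only, via the right-triangle sine rule, that $J(D)$ is the equilateral circumradius. You instead derive the two-dimensional case from the contact balance of Theorem~\ref{thm:intro-support}, and that argument is sound. The angular gaps between the selected unit directions sum to $2\pi$. None exceeds $\pi$, since otherwise all directions would lie in an open half-plane. So some pair subtends $\theta\ge 2\pi/3$ at $c=\cA(f)$, and the law of cosines gives $2\sinh^2(d/2)=(1-\cos\theta)\sinh^2R\ge\frac32\sinh^2R$, i.e.\ $R\le J(d)\le J(\dA(f))$.

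Your route buys three things:
\begin{itemize}
\item it is self-contained;
\item it makes the support theorem, which the paper proves but never uses for the Jung bound, do actual work;
\item it gives the equality case for free. If $R=J(\dA(f))>0$, an opposite pair of contact points is impossible, because it would force $\dA(f)=2R$ and $J(2R)>R$. Then $\theta$ is forced to equal $2\pi/3$, so $E_f$ contains an equilateral triple of side $\dA(f)$ with circumcenter $\cA(f)$.
\end{itemize}
Dekster's theorem, in exchange, is dimension-free.

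Two small repairs are needed. First, in the case $m=2$ the inequality you need is $2R\ge J^{-1}(R)$, i.e.\ $J(2R)\ge R$, which holds since $2/\sqrt3>1$. The inequality $2R\ge 2J^{-1}(R)$ that you wrote is false for every $R>0$, because $J(R)<R$ (equivalently $\cosh(R/2)>1/\sqrt3$). In any case this subcase needs no separate treatment, since opposite directions have $\theta=\pi\ge 2\pi/3$. Second, handle the case $R=0$ (constant $\omega_f$) separately, since the support theorem assumes $R>0$. There all three quantities in the inequality vanish.
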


\begin{figure}[htbp]
\centering
\begin{tikzpicture}[scale=1.05]
\draw[->] (-0.195,0) -- (4.16,0) node[right] {$d$};
\draw[->] (0,-0.195) -- (0,4.94) node[above] {$r$};
\draw[thick,blue] (0,0) -- (3.0,1.5) node[right] {$r=\tfrac12 d$};
\draw[thick,red] plot[smooth] coordinates {
(0.0,0.000)
(0.4,0.460)
(0.8,0.942)
(1.2,1.458)
(1.6,2.013)
(2.0,2.614)
(2.4,3.268)
(2.8,3.981)
(3.0,4.364)
} node[above right] {$r=J(d)$};
\fill[gray!25,opacity=0.6] plot[smooth] coordinates {
(0.0,0.000)
(0.4,0.460)
(0.8,0.942)
(1.2,1.458)
(1.6,2.013)
(2.0,2.614)
(2.4,3.268)
(2.8,3.981)
(3.0,4.364)
} -- (3.0,1.5) -- (0,0) -- cycle;
\fill[blue] (1.6,0.8) circle (1.4pt) node[below right] {segment};
\fill[red] (1.6,2.013) circle (1.4pt) node[above left] {3‑point};
\fill[black] (0,0) circle (0.7pt) node[below left] {$0$};
\end{tikzpicture}
\caption{Illustration for Theorem \ref{thm:intro-jung}.
The shaded region shows the admissible pairs $(d(f),r(f))$,
where $d(f)$ is the hyperbolic diameter and $r(f)$ the hyperbolic circumradius of $\omega_f(\mathbb D)$.
Lower bound $r=d/2$ is attained by hyperbolic segments;
the sharp upper bound $r=J(d)$ corresponds to three‑point extremal sets.}
\label{fig:thm1_3_diam_radius}
\end{figure}

The affine diameter has a useful vanishing characterization: $\dA(f)=0$ exactly when $\omega_f$ is constant, which is equivalent to saying that a target-affine change makes $f$ analytic.  Thus $\dA$ is a genuine quantitative defect from being an affine image of a holomorphic mapping.

There is also a pre-Schwarzian consequence of the affine optimization.
The operator \(P_f\), to be defined in \eqref{eq:harmonic-P}, is invariant under target‑affine transformations, a property established in \cite{Graf2016,HernandezMartin2015}.  A scaled Schwarz-Pick argument gives a sharp estimate in every affine gauge.  The role of balancing is to choose, among all such gauges, the one with the smallest possible dilatation norm.

\begin{theorem}
\label{thm:intro-ps}
Let $f$ be an orientation-preserving harmonic mapping in $\D$ with $\norm{\omega_f}_\infty<1$, and let
\(
 F=\Bal[f]=H+\overline G
\)
be the canonical balanced representative defined in Section \ref{sec:canonical}.  Put
\[
 k=\norm{G'/H'}_\infty=\kA(f)<1.
\]
Then
\begin{equation}\label{eq:intro-ps}
 \norm{P_H-P_f}
 =\norm{P_H-P_F}
 \le \mathcal C(k),
\end{equation}
where, for $0<k<1$,
\begin{equation}\label{eq:Ck-intro}
\mathcal C(k)
 =\frac{\sqrt{x_k}(k^2-x_k)}{k(1-x_k)}
 =\frac{2x_k^{3/2}}{k(1+x_k)},
 \end{equation}
in which $x_k\in(0,1)$ is the unique solution to
\[
\frac{\sqrt{x}(k^2-x)}{k(1-x)}=\frac{2x^{3/2}}{k(1+x)},
\]
given by
\begin{equation}\label{eq:xk-intro}
 x_k=\frac{3-k^2-\sqrt{(1-k^2)(9-k^2)}}{2},
\end{equation}
and $\mathcal C(0)=0$.  The estimate is sharp for every $0<k<1$.  Moreover,
\begin{equation}\label{eq:Ck-asymptotic-intro}
 \mathcal C(k)=\frac{2}{3\sqrt3}k^2+O(k^4)
 \quad(k\to0).
\end{equation}
For any orientation-preserving target-affine mapping $A$, the same universal estimate holds with the parameter $\norm{\omega_{A\circ f}}_\infty$ in place of $k$, and balancing minimizes this parameter over the affine orbit.
\end{theorem}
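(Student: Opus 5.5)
The plan is to reduce the estimate to the dilatation of the balanced representative alone, and then to a one-variable extremal problem. I will use the Hern\'andez--Mart\'in form of the harmonic pre-Schwarzian \eqref{eq:harmonic-P}, namely $P_f=h''/h'-\overline{\omega_f}\,\omega_f'/(1-|\omega_f|^2)$, up to the normalizing constant fixed in \eqref{eq:harmonic-P}. The norm is $\norm{\varphi}=\sup_{z\in\D}(1-|z|^2)|\varphi(z)|$.

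First, by the target-affine invariance of $P_f$ recalled before the statement \cite{Graf2016,HernandezMartin2015}, we have $P_f=P_F$, since $F=\Bal[f]=A\circ f$ for some orientation-preserving affine $A$. This gives the equality in \eqref{eq:intro-ps}. Writing $\omega=G'/H'$, the difference is therefore
\[
(1-|z|^2)\,|P_H-P_F|=\frac{|\omega|\,(1-|z|^2)|\omega'|}{1-|\omega|^2}.
\]
By Theorem \ref{thm:intro-optimal} and the balancing identity, $\norm{\omega}_\infty=\kA(f)=k$.

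Second, I apply a scaled Schwarz--Pick argument. Put $\varphi=\omega/k$, which maps $\D$ into $\overline{\D}$. Then $(1-|z|^2)|\varphi'|\le 1-|\varphi|^2$, which gives $(1-|z|^2)|\omega'|\le (k^2-|\omega|^2)/k$. Setting $x=|\omega(z)|^2\in[0,k^2)$ yields the pointwise bound
\[
(1-|z|^2)|P_H-P_F|\le \Phi_k(x):=\frac{\sqrt x\,(k^2-x)}{k(1-x)}.
\]
So $\norm{P_H-P_F}\le\max_{[0,k^2]}\Phi_k$. Since $\Phi_k$ vanishes at both endpoints, the maximum is attained at an interior critical point. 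Computing $\Phi_k'=0$ gives $x^2-(3-k^2)x+k^2=0$. Its root in $(0,k^2)$ is exactly $x_k$ in \eqref{eq:xk-intro}.

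Third, I simplify the value at the critical point. Substituting the quadratic relation $k^2=x(3-k^2)-x^2$ converts $\Phi_k(x_k)$ into the second form $2x_k^{3/2}/(k(1+x_k))$. The displayed equation defining $x_k$ in the statement is just this identity at the critical point. For the asymptotics \eqref{eq:Ck-asymptotic-intro}, expand $x_k=k^2/3+O(k^4)$ and insert it into $2x_k^{3/2}/(k(1+x_k))$, which gives $\frac{2}{3\sqrt3}k^2+O(k^4)$. The last sentence of the theorem follows because the same argument works in any gauge with $k$ replaced by $\norm{\omega_{A\circ f}}_\infty$. The bound $\max\Phi_k$ is increasing in $k$, and Theorem \ref{thm:intro-optimal} says balancing minimizes this parameter over the affine orbit.

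The main obstacle is sharpness, because it requires a globally univalent, balanced example with equality. I would take $\omega(z)=kz$, for which Schwarz--Pick is an equality everywhere. Concretely, let $H(z)=z$ and $G(z)=kz^2/2$, so $F(z)=z+\tfrac{k}{2}\overline{z}^2$.

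1. The mapping is univalent for $k<1$. Indeed, $H$ is convex and $|G'|<|H'|$, so the Clunie--Sheil-Small shear/close-to-convex criterion applies. Alternatively one can verify $F(z_1)\ne F(z_2)$ directly, since $|G(z_1)-G(z_2)|<|z_1-z_2|$.
2. The mapping is balanced. Here $\overline{\omega(\D)}$ is the closed Euclidean disk of radius $k$ centred at $0$, which is also a hyperbolic disk centred at $0$, so its circumcenter is $0$.
3. Equality is attained. Along a ray one can choose $z$ with $|kz|^2=x_k$, where the supremum equals $\mathcal C(k)$.

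I expect the univalence check to be routine. The step that needs the most care is confirming that the normalizing constant in \eqref{eq:harmonic-P} matches \eqref{eq:Ck-intro} exactly.
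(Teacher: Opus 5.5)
Your proposal is correct and follows the paper's proof essentially step for step. You use the affine invariance $P_F=P_f$ and then the scaled Schwarz--Pick bound for $\Omega/k$, which reduces everything to the one-variable function $\Phi_k$; your variable $x=t^2$ is just a reparametrization of the paper's $t$ and gives the same critical quadratic $x^2+(k^2-3)x+k^2=0$. For sharpness you use the same balanced extremal $f_k(z)=z+\frac{k}{2}\overline{z}^2$ evaluated at $|z|=\sqrt{x_k}/k$. Locating the maximum because $\Phi_k$ vanishes at both endpoints, and checking univalence via Clunie--Sheil-Small or the direct Lipschitz estimate, are only minor variants of the paper's derivative-sign argument and its realization lemma.
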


\begin{figure}[htbp]
\centering
\begin{tikzpicture}[scale=2.25]
\draw[->] (-0.2,0) -- (1.69,0) node[right] {$k$};
\draw[->] (0,-0.2) -- (0,1.8) node[above] {$\mathcal C$};
\draw[thick,red] plot[smooth] coordinates {
(0.00, 0.0000)
(0.10, 0.0038)
(0.20, 0.0153)
(0.30, 0.0350)
(0.40, 0.0642)
(0.50, 0.1049)
(0.60, 0.1604)
(0.70, 0.2359)
(0.80, 0.3411)
(0.85, 0.4147)
(0.90, 0.5263)
(0.94, 0.6876)
(0.97, 0.9472)
(0.99, 1.4326)
} node[above right] {$\mathcal C(k)$};
\draw[blue, dashed, thick] plot[domain=0:1.4] (\x, {2/(3*sqrt(3))*\x*\x}) node[right] {$\frac{2}{3\sqrt{3}}k^2$};
\fill[black] (0,0) circle (0.7pt) node[below left] {$0$};
\end{tikzpicture}
\caption{Illustration for Theorem \ref{thm:intro-ps}.
The solid curve is the sharp constant $\mathcal C(k)$ from \eqref{eq:Ck-intro}.
The dashed curve shows the quadratic asymptotic $\tfrac{2}{3\sqrt 3}k^2$ for small $k$.
$k=k_{\mathrm{aff}}(f)$ is the balanced canonical representative’s dilatation parameter.}
\label{fig:thm1_4_Ck_plot}
\end{figure}

The quadratic behavior in \eqref{eq:Ck-asymptotic-intro} comes from the scaled Schwarz-Pick lemma and is not, by itself, an effect of balancing.  The affine content is the optimization: by Theorem \ref{thm:intro-optimal}, the balanced gauge makes $k$ as small as possible, so the increasing function $\mathcal C(k)$ gives the strongest bound obtainable from this sharp one-parameter estimate.  In particular, when the affine diameter is small, the canonical analytic shadow approximates the affine-invariant harmonic pre-Schwarzian to second order in that intrinsic defect.

For convenience, we explicitly indicate where the proofs for the main results of this part are given: Theorem \ref{thm:intro-optimal}, Theorem \ref{thm:intro-support}, and Theorem \ref{thm:intro-jung} are established in Section \ref{sec:optimal}, Section \ref{sec:support-jung}, and Section \ref{sec:examples}, respectively,  whereas Theorem \ref{thm:intro-ps} is proved in Section \ref{sec:preschwarzian}.

The paper is organized as follows. Section \ref{sec:prelim} reviews harmonic mappings, target-affine actions, and hyperbolic geometry.  In Section \ref{sec:invariants}, we introduce the affine circumradius, affine diameter, and pseudohyperbolic oscillation and prove their invariance.  Section \ref{sec:optimal} gives the proof of Theorem \ref{thm:intro-optimal} and constructs the balanced gauge. Section \ref{sec:support-jung} proves Theorem \ref{thm:intro-support} and the inequality assertion of Theorem \ref{thm:intro-jung}.  Section \ref{sec:examples} supplies the extremal constructions completing the sharpness proof of Theorem \ref{thm:intro-jung}, and explains the relation with the usual normalization $\omega(0)=0$.  In Section \ref{sec:canonical}, we construct a canonical section of the affine quotient and introduce an affine-stable hierarchy of harmonic mappings with uniformly bounded differential distortion.  Section \ref{sec:stability} treats normality and stability of circumcenters.  Section \ref{sec:preschwarzian} proves Theorem \ref{thm:intro-ps}. Section \ref{sec:families} applies the theory to affine-linear invariant families and compares a balanced coefficient order with a pre-Schwarzian order. 
Section \ref{sec:further} collects further geometric consequences that clarify the geometry of the new invariants. Finally, Section \ref{sec:open} presents open problems and perspectives arising from these invariants as natural continuations of the classical ALIF program.

We emphasize that the hyperbolic Jung inequality itself is due to Dekster~\cite{Dekster1995}, and the scaled Schwarz-Pick estimate used later is classical in origin. Our main contribution is the development of the affine optimization framework valid over the full dilatation range: the affine circumradius yields the exact affine minimization of maximal differential distortion, while the balancing procedure selects the affine-optimal parameter for analytic-shadow estimates and the construction of invariant families.

\vskip.20in
\section{Preliminaries}\label{sec:prelim}

\subsection{Orientation-preserving harmonic mappings}

Let $\Omega\subset\C$ be a simply connected domain.  A complex-valued harmonic function $f$ in $\Omega$ can be written as
\(f=h+\overline g\),
where $h$ and $g$ are analytic functions.  The decomposition is unique up to an additive constant if one fixes, for example, $g(z_0)=0$ at a base point.  The Jacobian is
\(
 J_f=|h'|^2-|g'|^2.
\)
By Lewy's theorem~\cite{Lewy1936}, a harmonic homeomorphism has nonvanishing Jacobian.  We call $f$ orientation-preserving if $J_f>0$.  Then $h'\ne0$ and the second complex dilatation
\begin{equation}\label{eq:omega-def}
 \omega_f=\frac{g'}{h'}
\end{equation}
is analytic with $|\omega_f|<1$.

The maximal and minimal stretches of the differential are
\[
 \Lambda_f=|h'|+|g'|,
 \quad
 \lambda_f=|h'|-|g'|,
\]
so the pointwise eccentricity is
\begin{equation}\label{eq:pointwise-K}
 K_f(z)=\frac{\Lambda_f(z)}{\lambda_f(z)}
 =\frac{1+|\omega_f(z)|}{1-|\omega_f(z)|}.
\end{equation}
We write
\begin{equation}\label{eq:global-K}
 K(f)=\sup_{z\in\D}K_f(z)
 =\frac{1+k(f)}{1-k(f)},
 \quad
 k(f)=\norm{\omega_f}_\infty.
\end{equation}
If $k(f)<1$ and $f$ is a homeomorphism, this is the usual quasiconformal constant.

We will often use the normalized family
\[
 \SH=
 \left\{f=h+\overline g:
 f \text{ is sense‑preserving and univalent in }\D,
 f(0)=0,\, h'(0)=1
 \right\},
\]
and its standard subfamily
\[
 \SHo=\{f=h+\overline g\in\SH:g'(0)=0\}.
\]
Different normalization conventions occur in the literature; none of the affine invariants introduced below depend on these choices.

\subsection{Target-affine transformations}

For an orientation‑preserving real‑affine mapping $A$ given by \eqref{eq:A-def},
if $f=h+\overline{g}$, then
\(
A\circ f = H+\overline{G},
\)
where, up to additive constants,
\begin{equation}\label{eq:H-G-A}
 H=ah+bg,
 \quad
 G=\overline a\,g+\overline b\,h.
\end{equation}
Consequently,
\begin{equation}\label{eq:omega-transform}
 \omega_{A\circ f}
 =\frac{G'}{H'}
 =\frac{\overline a\,\omega_f+\overline b}{b\,\omega_f+a}.
\end{equation}

\begin{lemma}\label{lem:affine-aut}
For every $A$ as in \eqref{eq:A-def}, the mapping
\begin{equation}\label{eq:T-A}
 T_A(\zeta)=\frac{\overline a\,\zeta+\overline b}{b\,\zeta+a}
\end{equation}
is an automorphism of $\D$.  Conversely, every automorphism of $\D$ is $T_A$ for some orientation-preserving real-affine $A$.
\end{lemma}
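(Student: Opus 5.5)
The plan is to compare $T_A$ with the standard normal form of disk automorphisms, $\zeta\mapsto e^{i\theta}\frac{\zeta-\alpha}{1-\overline\alpha\zeta}$ with $\alpha\in\D$ and $\theta\in\R$, and then read off both directions from the coefficients. For the forward direction, let $A$ be as in \eqref{eq:A-def}, so $|a|>|b|\ge0$ and in particular $a\neq0$. Dividing numerator and denominator by $a$ gives
\[
 T_A(\zeta)=\frac{\overline a}{a}\cdot\frac{\zeta+\overline b/\overline a}{1+(b/a)\zeta}.
\]
Put $\alpha=-\overline b/\overline a$. Then $|\alpha|=|b|/|a|<1$ and $b/a=-\overline\alpha$, so
\[
 T_A(\zeta)=e^{i\theta}\,\frac{\zeta-\alpha}{1-\overline\alpha\zeta},\qquad e^{i\theta}=\overline a/a .
\]
This is a disk automorphism. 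If one wants to avoid quoting the normal form, the direct check is $|\overline a\zeta+\overline b|^2-|b\zeta+a|^2=(|a|^2-|b|^2)(|\zeta|^2-1)$. This identity shows that $T_A$ maps $\D$ into $\D$ and $\partial\D$ to $\partial\D$. The determinant $|a|^2-|b|^2\neq0$ shows that $T_A$ is a nondegenerate M\"obius map, so its inverse is of the same type and $T_A$ is bijective on $\D$.

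For the converse, take an arbitrary automorphism $\phi(\zeta)=e^{i\theta}(\zeta-\alpha)/(1-\overline\alpha\zeta)$. We need $a,b$ with $|a|>|b|$ and $\overline a/a=e^{i\theta}$, $\overline b/\overline a=-\alpha$. Choose $a=e^{-i\theta/2}$, so that $\overline a/a=e^{i\theta}$, and $b=-\overline\alpha\,\overline a=-\overline\alpha e^{i\theta/2}$. Then $|b|=|\alpha|<1=|a|$. Since $b/a=-\overline\alpha$ and $\overline b/\overline a=-\alpha$, the computation above gives $T_A=\phi$ for $A(w)=aw+b\overline w$, and any constant $c$ can be added.

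The only real subtlety, and the likely obstacle, is the square-root choice of $a$. The factor $e^{i\theta}$ is determined by $\phi$, but $a$ is determined only up to sign and positive scaling, since $T_A$ depends only on $[a:b]$ up to a real factor. The half-angle choice must be shown to be consistent, and it is, because only $\overline a/a$ enters the formula. The two sign choices give $A$ and $-A$, which both induce $\phi$. Note also that $A$ and $rA$ with $r>0$ induce the same $T_A$, so the correspondence is surjective but not injective. The kernel consists of the positive real scalings and $\pm$; this is consistent with the later claim that minimizing affine maps are unique only up to similarity.
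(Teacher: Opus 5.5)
Your approach is the same as the paper's. You divide by $a$ to get $\frac{\overline a}{a}\cdot\frac{\zeta+\overline b/\overline a}{1+(b/a)\zeta}$ and then match it against the normal form $e^{i\theta}\frac{\zeta-\alpha}{1-\overline\alpha\zeta}$. The paper writes $q=b/a$, and for the converse takes $q=-\overline c$, any $a$ with $\overline a/a=e^{i\theta}$, and $b=aq$. Your forward direction is correct. The identity $|\overline a\zeta+\overline b|^2-|b\zeta+a|^2=(|a|^2-|b|^2)(|\zeta|^2-1)$ is a useful self-contained check that the paper omits.

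The converse, however, has an algebra slip that makes your verification false as written. You need $b/a=-\overline\alpha$, i.e.\ $b=-\overline\alpha\,a$, but you set $b=-\overline\alpha\,\overline a=-\overline\alpha e^{i\theta/2}$. With $a=e^{-i\theta/2}$ this gives $b/a=-\overline\alpha e^{i\theta}$ and $\overline b/\overline a=-\alpha e^{-i\theta}$, so
\[
 T_A(\zeta)=\frac{e^{i\theta}\zeta-\alpha}{1-\overline\alpha e^{i\theta}\zeta}.
\]
This is $\eta\mapsto\frac{\eta-\alpha}{1-\overline\alpha\eta}$ evaluated at $\eta=e^{i\theta}\zeta$, not $e^{i\theta}\frac{\zeta-\alpha}{1-\overline\alpha\zeta}$. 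For instance, $T_A(0)=-\alpha$ while $\phi(0)=-e^{i\theta}\alpha$. So your sentence ``since $b/a=-\overline\alpha$ and $\overline b/\overline a=-\alpha$'' does not hold for the $b$ you chose.

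The fix is to take $b=-\overline\alpha\,a=-\overline\alpha e^{-i\theta/2}$. This still satisfies $|b|=|\alpha|<1=|a|$, and it is exactly the paper's $b=aq$ with $q=-\overline c$.

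The half-angle choice of $a$ is not the real subtlety you describe: any $a$ with $\overline a/a=e^{i\theta}$ works. The step that actually needs care is the formula for $b$, which must be $-\overline\alpha$ times $a$, not times $\overline a$.
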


\begin{proof}
Write $q=b/a$, so $|q|<1$.  Then
\[
 T_A(\zeta)
 =\frac{\overline a}{a}\cdot
 \frac{\zeta+\overline q}{1+q\,\zeta},
\]
which is a disk automorphism.  Conversely, every $T\in\Aut(\D)$ has the form
\[
 T(\zeta)=e^{i\theta}\frac{\zeta-c}{1-\overline c\,\zeta}.
\]
Take $q=-\overline c$, choose $a$ with $\overline a/a=e^{i\theta}$, and set $b=aq$.  Then $|b|<|a|$ and $T=T_A$.
\end{proof}

The identity \eqref{eq:omega-transform} is the mechanism behind everything that follows.  Notice also that target similarities $S(w)=\lambda w+c$ with $\lambda\ne0$, induce rotations
\[
 \omega_{S\circ f}=\frac{\overline\lambda}{\lambda}\omega_f.
\]

If $\phi\in\Aut(\D)$, then
\begin{equation}\label{eq:precomp-omega}
 \omega_{f\circ\phi}=\omega_f\circ\phi.
\end{equation}
Thus precomposition by a disk automorphism leaves the dilatation range unchanged as a set.  The additional target normalization used in a harmonic Koebe transform only rotates that range.  This will give automatic linear invariance of our global quantities.

\subsection{The hyperbolic disk}

We equip $\D$ with the Poincar\'e metric of curvature $-1$,
\begin{equation}\label{eq:poincare-line}
 ds_{\D}=\frac{2|dz|}{1-|z|^2}.
\end{equation}
Its distance is
\begin{equation}\label{eq:poincare-distance}
 d_{\D}(z_1,z_2)
 =2\arctanh \delta(z_1,z_2),
 \quad
 \delta(z_1,z_2)=\left|\frac{z_1-z_2}{1-\overline{z_2}\, z_1}\right|.
\end{equation}
Here $\delta$ is the pseudohyperbolic distance.  Every disk automorphism is an isometry for $d_{\D}$.

For a nonempty bounded set $E\subset\D$, define
\begin{align}
 r_E(c)&=\sup_{\zeta\in E}d_{\D}(c,\zeta),\label{eq:rEc}\\
 \rad_{\D}(E)&=\inf_{c\in\D}r_E(c),\label{eq:radE}\\
 \diam_{\D}(E)&=\sup_{\zeta,\eta\in E}d_{\D}(\zeta,\eta).
\end{align}
A point attaining the infimum in \eqref{eq:radE} is a circumcenter.  The hyperbolic disk is a complete CAT$(-1)$ space, hence a complete CAT$(0)$ space.  Thus every bounded nonempty set has a unique circumcenter; see, for example, Bridson-Haefliger~\cite[Chapter II]{BridsonHaefliger1999}.

We record two elementary facts that will be used repeatedly.

\begin{lemma}\label{lem:rad-basic}
Let $E\subset\D$ be nonempty and hyperbolically bounded, with circumradius $R$ and circumcenter $c$.  Then
\begin{enumerate}[label=\textup{(\roman*)}]
\item $\diam_{\D}(E)\le2R$;
\item if $T\in\Aut(\D)$, then
$\rad_{\D}(T(E))=R$,
$\diam_{\D}(T(E))=\diam_{\D}(E)$,
and the circumcenter of $T(E)$ is $T(c)$;
\item $\rad_{\D}(E)=\rad_{\D}(\overline E)$ and
$\diam_{\D}(E)=\diam_{\D}(\overline E)$;
\item $\rad_{\D}(E)=\rad_{\D}(\operatorname{hconv}E)$, where $\operatorname{hconv}$ denotes the hyperbolic geodesic convex hull.
\end{enumerate}
\end{lemma}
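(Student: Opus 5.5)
We prove the four items in order; each follows directly from the definitions \eqref{eq:rEc}--\eqref{eq:radE}, the triangle inequality for $d_{\D}$, and the fact that disk automorphisms are isometries.

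For (i), let $\zeta,\eta\in E$ and let $c$ be the circumcenter. The triangle inequality gives $d_{\D}(\zeta,\eta)\le d_{\D}(\zeta,c)+d_{\D}(c,\eta)\le 2r_E(c)=2R$. Taking the supremum over $\zeta,\eta$ yields $\diam_{\D}(E)\le 2R$. This step does not need the circumcenter to be attained: for any $c'$ one gets $\diam_{\D}(E)\le 2r_E(c')$, and then we take the infimum over $c'$.

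For (ii), since $T$ is a bijective isometry of $\D$, we have $r_{T(E)}(T(c'))=r_E(c')$ for every $c'\in\D$. As $c'$ ranges over $\D$, so does $T(c')$, so the two infima coincide and $\rad_{\D}(T(E))=R$. Moreover $T(c)$ attains the infimum for $T(E)$, and by uniqueness of circumcenters in the CAT$(0)$ space $\D$ it is the circumcenter of $T(E)$. Diameter invariance follows in the same way from $d_{\D}(T\zeta,T\eta)=d_{\D}(\zeta,\eta)$.

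For (iii), the closure $\overline E$ here is taken in $\D$, and $E$ is bounded, so $\overline E$ is compact in $\D$. The function $\zeta\mapsto d_{\D}(c',\zeta)$ is continuous, hence its supremum over $E$ equals its supremum over $\overline E$. Thus $r_E=r_{\overline E}$ pointwise, and the radii agree. The same argument applies to the continuous function $(\zeta,\eta)\mapsto d_{\D}(\zeta,\eta)$ on $E\times E$, which is dense in $\overline E\times\overline E$, so the diameters agree.

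Item (iv) is the only one needing geometric input. Since $E\subset\operatorname{hconv}E$, we immediately get $\rad_{\D}(E)\le\rad_{\D}(\operatorname{hconv}E)$. For the reverse inequality, fix $c'$ and let $\rho=r_E(c')$. The closed hyperbolic ball $\bar B(c',\rho)$ is geodesically convex, because distance to a point is a convex function along geodesics in a CAT$(0)$ space. This ball contains $E$, hence it contains $\operatorname{hconv}E$, the smallest convex set containing $E$ (it can also be built by iteratively adding geodesic segments, each of which stays in the ball). Therefore $r_{\operatorname{hconv}E}(c')\le\rho=r_E(c')$, and taking the infimum over $c'$ gives equality. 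The only point requiring care is this convexity of balls, which we quote from Bridson--Haefliger.
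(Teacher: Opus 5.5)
Your proposal is correct and follows the same route as the paper: the triangle inequality for (i), isometry invariance together with uniqueness of the circumcenter for (ii), continuity of the distance for (iii), and geodesic convexity of closed hyperbolic balls (so any ball containing $E$ contains its hyperbolic convex hull) for (iv). You simply spell out the details that the paper states in one line each.
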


\begin{proof}
Part (i) follows from the triangle inequality.  Part (ii) follows from isometric invariance and uniqueness of the circumcenter.  Part (iii) follows by continuity of the distance.  Finally, hyperbolic balls are geodesically convex.  Hence every ball containing $E$ contains $\operatorname{hconv}E$, while the reverse inequality for circumradii is immediate from inclusion.
\end{proof}

For the diameter, the convex hull can also be inserted without changing the value.  This is a standard consequence of Busemann convexity in CAT$(0)$ spaces. We state the specialized version required for our purposes.

\begin{lemma}\label{lem:diam-convex-hull}
For every bounded $E\subset\D$, we have
\(
 \diam_{\D}(\operatorname{hconv}E)=\diam_{\D}(E).
\)
\end{lemma}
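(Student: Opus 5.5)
The inequality $\diam_{\D}(E)\le\diam_{\D}(\operatorname{hconv}E)$ is immediate from $E\subset\operatorname{hconv}E$. For the reverse inequality I will show that, for each fixed point $p\in\D$, the function $\zeta\mapsto d_{\D}(p,\zeta)$ satisfies $\sup_{\operatorname{hconv}E}d_{\D}(p,\cdot)=\sup_E d_{\D}(p,\cdot)$. Applying this twice gives
\[
\sup_{\zeta,\eta\in\operatorname{hconv}E}d_{\D}(\zeta,\eta)=\sup_{\zeta\in\operatorname{hconv}E}\ \sup_{\eta\in E}d_{\D}(\zeta,\eta)=\sup_{\eta\in E}\ \sup_{\zeta\in\operatorname{hconv}E}d_{\D}(\zeta,\eta)=\sup_{\eta,\zeta\in E}d_{\D}(\zeta,\eta).
\]

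To prove the one-point statement, fix $p\in\D$ and put $s=\sup_{\zeta\in E}d_{\D}(p,\zeta)$, which is finite because $E$ is bounded. The set $E$ lies in the closed hyperbolic ball $\overline B(p,s)$. Hyperbolic balls are geodesically convex, as already used in the proof of Lemma \ref{lem:rad-basic}(iv), and an intersection of convex sets is convex. Since $\operatorname{hconv}E$ is the smallest geodesically convex set containing $E$, we get $\operatorname{hconv}E\subset\overline B(p,s)$. Hence $d_{\D}(p,\zeta)\le s$ for all $\zeta\in\operatorname{hconv}E$, and the reverse bound holds trivially because $E\subset\operatorname{hconv}E$.

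The only point needing care is the convexity of closed hyperbolic balls. In a CAT$(0)$ space the function $t\mapsto d(p,\gamma(t))$ is convex along any geodesic $\gamma$, so its sublevel sets are convex. Alternatively, after moving $p$ to $0$ by an automorphism, the ball becomes a Euclidean disk centred at $0$; the geodesics are circular arcs orthogonal to the unit circle, and along them $|z|$ has no interior maximum, so $|z|$ stays below its endpoint values. This step is the main obstacle, but it is standard (Bridson--Haefliger), and it is the same fact already invoked in Lemma \ref{lem:rad-basic}. If $\operatorname{hconv}$ is read as the closed hull, Lemma \ref{lem:rad-basic}(iii) shows that taking closures changes nothing.
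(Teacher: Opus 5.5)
Your proof is correct, and it organizes the argument differently from the paper. The paper invokes Busemann convexity, i.e.\ joint convexity of $t\mapsto d_{\D}(\gamma_1(t),\gamma_2(t))$ along two geodesics. From this it deduces that replacing either endpoint of a pair by a point on a geodesic segment between old points cannot increase distances. It then builds $\operatorname{hconv}E$ from the inside, as the increasing union of iterated geodesic joins, and finally passes to the closure.

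You instead describe the hull from the outside. You use only convexity of closed balls, i.e.\ sublevel sets of $d_{\D}(p,\cdot)$, which is the special case of Busemann convexity with one geodesic constant. Minimality of the hull then gives $\operatorname{hconv}E\subset\overline B_{\D}(p,\sup_E d_{\D}(p,\cdot))$ for every $p\in\D$. Your exchange of suprema handles the two endpoints one at a time, which is the paper's ``replace either endpoint'' step in different form.

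Your version needs a weaker geometric input. It avoids the inductive construction and the limiting/closure step, which the paper treats only briefly. It applies verbatim to the closed hull, since closed balls are closed, and it is the same ball-containment device the paper already uses for Lemma~\ref{lem:rad-basic}(iv). The paper's constructive route is the more natural one if $\operatorname{hconv}$ is defined through iterated geodesic joins rather than as the smallest convex superset.
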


\begin{proof}
In a CAT$(0)$ space, the distance between points moving proportionally on two geodesic segments is a convex function of the parameter. Therefore, the diameter does not increase when one replaces either endpoint by a point on a geodesic segment joining two old points.  Iterating this observation over finite geodesic hulls and then taking closures proves the claim.
\end{proof}

\subsection{Pre-Schwarzian and Schwarzian derivatives}

For a locally univalent orientation-preserving harmonic mapping $f=h+\overline g$, Hern\'andez-Mart\'in~\cite{HernandezMartin2015} defined the pre-Schwarzian derivative
\begin{equation}\label{eq:harmonic-P}
 P_f=(\log J_f)_z
 =\frac{h''}{h'}-
 \frac{\overline\omega\,\omega'}{1-|\omega|^2}.
\end{equation}
The associated Schwarzian derivative is
\begin{equation}\label{eq:harmonic-S}
 S_f=(P_f)_z-\frac12P_f^2.
\end{equation}
Both enjoy the expected chain rules under analytic precomposition, and both are invariant under orientation-preserving target-affine mappings; see~\cite{Graf2016,HernandezMartin2015}.  We use the norm
\begin{equation}\label{eq:Pnorm}
 \norm{P_f}=\sup_{z\in\D}(1-|z|^2)|P_f(z)|.
\end{equation}
The affine invariance of $P_f$ will be particularly useful because the analytic part of $f$ changes under affine balancing while $P_f$ does not.
\vskip.20in
\section{Affine radius, affine diameter, and affine oscillation}\label{sec:invariants}

We now introduce the main global invariants of the paper.

\begin{definition}\label{def:aff-invariants}
Let $f=h+\overline g$ be orientation-preserving in $\D$ and suppose that $\omega_f(\D)$ is hyperbolically bounded.  Set
\(
 E_f=\overline{\omega_f(\D)}.
\)
The \emph{affine circumcenter}, \emph{affine radius}, and \emph{affine diameter} of $f$ are
\begin{align}
 \cA(f)&= {\rm{c}}_{\rm{hyp}}(E_f),\label{eq:cA}\\
 \rA(f)&=\rad_{\D}(E_f),\label{eq:rA}\\
 \dA(f)&=\diam_{\D}(E_f).
\end{align}

We also define
\begin{equation}\label{eq:kA-KA}
 \kA(f):=\tanh\frac{\rA(f)}2,
 \quad
 \KA(f):=\frac{1+\kA(f)}{1-\kA(f)}=e^{\rA(f)},
\end{equation}
and the \emph{pseudohyperbolic affine oscillation} is
\begin{equation}\label{eq:qA}
 \qA(f):=\tanh\frac{\dA(f)}2.
\end{equation}
\end{definition}

By \eqref{eq:poincare-distance}, we have
\begin{equation}\label{eq:qA-explicit}
 \qA(f)
 =\sup_{z,w\in\D}
 \left|
 \frac{\omega_f(z)-\omega_f(w)}
 {1-\overline{\omega_f(w)}\,\omega_f(z)}
 \right|.
\end{equation}
Thus $\qA$ is a M\"obius-invariant oscillation of the complex dilatation.

\begin{proposition}
\label{prop:full-invariance}
Let $f$ be as in Definition \ref{def:aff-invariants}. 
\begin{enumerate}[label=\textup{(\roman*)}]
\item If $A$ is any orientation-preserving real-affine mapping of the target, then
\[
 \rA(A\circ f)=\rA(f),\ \
 \dA(A\circ f)=\dA(f),\ \
 \KA(A\circ f)=\KA(f),\ \
 \qA(A\circ f)=\qA(f).
\]
Moreover,
\[
 \cA(A\circ f)=T_A(\cA(f)).
\]
\item If $\phi\in\Aut(\D)$, then
\[
 \rA(f\circ\phi)=\rA(f),\quad
 \dA(f\circ\phi)=\dA(f).
\]
The same holds for normalized harmonic Koebe transforms.
\end{enumerate}
\end{proposition}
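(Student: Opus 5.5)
The plan is to reduce everything to the transformation rule \eqref{eq:omega-transform}, Lemma \ref{lem:affine-aut}, and the isometry statements in Lemma \ref{lem:rad-basic}.

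For part (i), fix an orientation-preserving real-affine $A$. By \eqref{eq:omega-transform}, $\omega_{A\circ f}=T_A\circ\omega_f$, so $\omega_{A\circ f}(\D)=T_A(\omega_f(\D))$. By Lemma \ref{lem:affine-aut}, $T_A$ is an automorphism of $\D$, hence a homeomorphism of $\D$. It is moreover a homeomorphism of $\overline\D$, so it commutes with closure relative to $\D$. Since $\omega_f(\D)$ is hyperbolically bounded, its closure lies in a compact subset of $\D$, and therefore
\[
E_{A\circ f}=\overline{T_A(\omega_f(\D))}=T_A(E_f).
\]
In particular $E_{A\circ f}$ is again hyperbolically bounded, so the invariants of $A\circ f$ are defined. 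Lemma \ref{lem:rad-basic}(ii) then gives $\rad_{\D}(T_A(E_f))=\rad_{\D}(E_f)$ and $\diam_{\D}(T_A(E_f))=\diam_{\D}(E_f)$. It also shows that the circumcenter of $T_A(E_f)$ is $T_A$ applied to the circumcenter of $E_f$, which is exactly $\cA(A\circ f)=T_A(\cA(f))$. The invariance of $\KA=e^{\rA}$ and $\qA=\tanh(\dA/2)$ follows at once, since both are functions of $\rA$ and $\dA$ through \eqref{eq:kA-KA} and \eqref{eq:qA}.

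For part (ii), I use \eqref{eq:precomp-omega}: $\omega_{f\circ\phi}=\omega_f\circ\phi$. Because $\phi$ is a bijection of $\D$, the range $\omega_{f\circ\phi}(\D)$ equals $\omega_f(\D)$, so $E_{f\circ\phi}=E_f$ and both invariants coincide trivially.

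For a normalized harmonic Koebe transform, I would write it as a composition of three operations. First precompose with a disk automorphism $\phi$. Then subtract the constant $f(\phi(0))$. Finally divide by the complex number $\lambda=(f\circ\phi)_z(0)=h'(\phi(0))\phi'(0)$, which is nonzero because $h'\ne0$. This last step is postcomposition with the similarity $S(w)=(w-f(\phi(0)))/\lambda$, which is a special orientation-preserving real-affine map. The transform is therefore $S\circ(f\circ\phi)$, and combining part (i) with the first half of part (ii) gives the claim. Equivalently, the similarity only rotates the dilatation range by $\overline\lambda/\lambda$, and rotations are hyperbolic isometries.

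The only point needing care is the closure step in part (i), namely that $\overline{T(\omega_f(\D))}=T(\overline{\omega_f(\D)})$ with closures taken in $\D$. I expect this to be the main, if minor, obstacle, and it is handled by $T$ being a homeomorphism of $\D$. I would also remark that if a given normalization convention for the Koebe transform includes a conjugation or other target-affine factor, the same argument applies, because that factor is again covered by part (i).
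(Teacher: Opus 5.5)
Your proposal is correct and follows the paper's proof essentially line for line: $E_{A\circ f}=T_A(E_f)$ together with Lemma~\ref{lem:rad-basic}(ii) for part (i), invariance of the dilatation range under precomposition by $\phi$, and the observation that the Koebe normalization is a target similarity. Your explicit treatment of the closure step and of the decomposition $S\circ(f\circ\phi)$ only fills in details the paper leaves implicit. There is one immaterial slip: dividing by $\lambda$ rotates the range by $\lambda/\overline\lambda$, not by $\overline\lambda/\lambda$.
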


\begin{proof}
By \eqref{eq:omega-transform}, we have
\(
 E_{A\circ f}=T_A(E_f),
\)
and $T_A$ is a hyperbolic isometry by Lemma \ref{lem:affine-aut}.  Part (i) now follows from Lemma \ref{lem:rad-basic}.  By \eqref{eq:precomp-omega}, precomposition by a disk automorphism does not change the dilatation image as a set.  A target similarity, used in the usual normalization of a Koebe transform, rotates the dilatation image, hence preserves all the listed quantities.
\end{proof}

The next observation identifies the zero level of the new hierarchy.

\begin{proposition}
\label{prop:zero-defect}
For an orientation-preserving harmonic mapping $f=h+\overline g$ in $\D$, the following statements are equivalent:
\begin{enumerate}[label=\textup{(\roman*)}]
\item $\dA(f)=0$;
\item $\rA(f)=0$;
\item $\omega_f$ is constant;
\item there exists an orientation-preserving real-affine mapping $A$ such that $A\circ f$ is analytic.
\end{enumerate}
\end{proposition}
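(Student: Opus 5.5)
We prove the cycle of implications (i)$\Leftrightarrow$(ii)$\Leftrightarrow$(iii), and then (iii)$\Leftrightarrow$(iv). Throughout we use $E_f=\overline{\omega_f(\D)}$ from Definition \ref{def:aff-invariants}, together with the elementary relation $\diam_{\D}(E)\le 2\rad_{\D}(E)$ from Lemma \ref{lem:rad-basic}(i).

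\emph{Equivalence of (i), (ii), (iii).} If $\omega_f\equiv\zeta_0$, then $E_f=\{\zeta_0\}$. Its circumcenter is $\zeta_0$ itself, so $\rA(f)=0$, and then $\dA(f)\le 2\rA(f)=0$. This gives (iii)$\Rightarrow$(ii)$\Rightarrow$(i). For (i)$\Rightarrow$(iii), suppose $\dA(f)=0$. Then $d_{\D}(\omega_f(z),\omega_f(w))=0$ for all $z,w\in\D$. Since $d_{\D}$ is a genuine metric, this forces $\omega_f(z)=\omega_f(w)$, so $\omega_f$ is constant. Only the definitions are needed here, and no analyticity.

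\emph{(iii)$\Rightarrow$(iv).} Suppose $\omega_f\equiv\zeta_0\in\D$. By Lemma \ref{lem:affine-aut}, there is an orientation-preserving real-affine $A$ whose induced automorphism $T_A$ sends $\zeta_0$ to $0$. Concretely, take $a=1$ and $b=-\overline{\zeta_0}$; then $|b|<|a|$ and
\[
T_A(\zeta_0)=\frac{\zeta_0-\zeta_0}{1-\overline{\zeta_0}\zeta_0}=0 .
\]
By \eqref{eq:omega-transform} we get $\omega_{A\circ f}\equiv 0$. Writing $A\circ f=H+\overline G$ as in \eqref{eq:H-G-A}, this means $G'\equiv 0$, so $G$ is constant. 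Hence $A\circ f=H+\text{const}$ is analytic.

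\emph{(iv)$\Rightarrow$(iii).} This is the step that needs a little care. Suppose $A\circ f=H+\overline G$ is analytic. Then $\overline G=A\circ f-H$ is both analytic and antianalytic, so $\overline{G}$ is constant and $G'\equiv0$. Orientation is preserved, since $H'\ne0$ (the Jacobian of $A\circ f$ is positive). Therefore $\omega_{A\circ f}=G'/H'\equiv0$. Inverting \eqref{eq:omega-transform} gives $\omega_f=T_A^{-1}(0)$, which is constant because $T_A$ is a bijection of $\D$.

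The only mildly delicate points are two. First, the decomposition $f=h+\overline g$ is unique only up to constants, which is harmless for $g'$. Second, an analytic function whose conjugate is also analytic must be constant; this follows from the Cauchy–Riemann equations or the open mapping theorem. Beyond these, no real obstacle is expected.
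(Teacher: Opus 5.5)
Your proof is correct and follows the paper's argument. For (iii)$\Rightarrow$(iv) you use the same affine map $A(w)=w-\overline{\zeta_0}\,\overline w$, verifying analyticity via \eqref{eq:omega-transform} where the paper writes $g=\lambda h+C$ and computes $A\circ f$ explicitly; for (iv)$\Rightarrow$(iii) you use injectivity of $T_A$, as the paper does, and your treatment of (i)--(iii) spells out what the paper calls immediate.
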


\begin{proof}
The equivalence of (i)-(iii) is immediate.  Suppose $\omega_f\equiv\lambda$ with $|\lambda|<1$.  Then $g'=\lambda h'$, i.e., $g=\lambda h+C$.  Hence
\[
 f=h+\overline\lambda\,\overline h+\overline C.
\]
For
\[
 A(w)=w-\overline\lambda\,\overline w,
\]
we obtain
\[
 A\circ f=(1-|\lambda|^2)h+C-\lambda\overline{C},
\]
which is analytic.  Conversely, if $A\circ f$ is analytic, then $\omega_{A\circ f}\equiv0$.  Since \(\omega_{A\circ f}=T_A\circ\omega_f\) and $T_A$ is injective, $\omega_f$ is constant.
\end{proof}

\begin{remark}
The usual number $k(f)=\norm{\omega_f}_\infty$ measures the distance of the dilatation range from the distinguished origin of the disk model.  By contrast, $\rA(f)$ measures the radius of the smallest hyperbolic disk containing the range, with the center allowed to move.  Thus $\rA$ removes precisely the non-intrinsic choice introduced by the target affine gauge.
\end{remark}

For an orientation-preserving harmonic mapping, the uniform boundedness of the differential distortion is exactly the condition that the dilatation range be hyperbolically bounded.  If the mapping is also globally univalent, this is equivalent to quasiconformality onto its image.

\begin{proposition}\label{prop:bounded-iff-qc}
For an orientation-preserving harmonic mapping $f$ in $\D$, the following statements are equivalent:
\begin{enumerate}[label=\textup{(\roman*)}]
\item $\norm{\omega_f}_\infty<1$;
\item $\rA(f)<\infty$;
\item $\dA(f)<\infty$;
\item $\qA(f)<1$.
\end{enumerate}
\end{proposition}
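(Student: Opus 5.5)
The plan is to prove the cycle (i)$\Rightarrow$(ii)$\Rightarrow$(iii)$\Rightarrow$(iv)$\Rightarrow$(i), reducing everything to the relation between hyperbolic boundedness of $E_f=\overline{\omega_f(\D)}$ and its containment in a compact Euclidean subdisk of $\D$. The basic observation is that for a subset of $\D$ the following are equivalent: it lies in $\{|\zeta|\le k\}$ for some $k<1$; it lies in some hyperbolic ball centered at $0$; it has finite hyperbolic diameter. This follows from \eqref{eq:poincare-distance}, since $d_{\D}(0,\zeta)=2\arctanh|\zeta|$ is finite exactly when $|\zeta|<1$, together with the triangle inequality.

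For (i)$\Rightarrow$(ii), suppose $k=\norm{\omega_f}_\infty<1$. Then $E_f\subset\{|\zeta|\le k\}$, so
\[
 r_{E_f}(0)\le 2\arctanh k<\infty,
\]
and hence $\rA(f)\le r_{E_f}(0)<\infty$ by \eqref{eq:radE}. For (ii)$\Rightarrow$(iii), use Lemma \ref{lem:rad-basic}(i), which gives $\dA(f)\le 2\rA(f)$. (I note in passing that Definition \ref{def:aff-invariants} presupposes hyperbolic boundedness. For the equivalence I will read $\rA,\dA$ as the defining suprema and infima \eqref{eq:rEc}--\eqref{eq:radE}, allowing the value $+\infty$, so that the statement is meaningful for every orientation-preserving $f$.) The equivalence (iii)$\Leftrightarrow$(iv) is immediate from \eqref{eq:qA}, since $\tanh(t/2)<1$ exactly when $t<\infty$, with the convention $\tanh(\infty)=1$. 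Equivalently one can read it off \eqref{eq:qA-explicit}.

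The only step with content is (iii)$\Rightarrow$(i). Fix a point $\zeta_0=\omega_f(0)\in\D$. If $\dA(f)=D<\infty$, then every $\zeta\in E_f$ satisfies
\[
 d_{\D}(0,\zeta)\le d_{\D}(0,\zeta_0)+d_{\D}(\zeta_0,\zeta)\le 2\arctanh|\zeta_0|+D=:M<\infty.
\]
Therefore $|\omega_f(z)|\le\tanh(M/2)<1$ for all $z\in\D$, which is (i). The main point to handle carefully is this passage from finite hyperbolic diameter to a uniform Euclidean bound strictly below $1$. It rests on the fact that hyperbolic balls about the origin are Euclidean disks of radius $\tanh(M/2)$. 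Once this is in place, the remaining implications are routine. As a final remark, the sup norm is not affine invariant, but each of (ii)--(iv) is by Proposition \ref{prop:full-invariance}. Consequently (i) holds for $f$ if and only if it holds for every $A\circ f$, and this is consistent with Lemma \ref{lem:affine-aut}, since $T_A$ maps compact subsets of $\D$ to compact subsets.
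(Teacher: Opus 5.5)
Your proof is correct and follows essentially the same route as the paper: containment of $E_f$ in $\{|\zeta|\le k\}$ for (i)$\Rightarrow$(ii), the bound $\dA\le 2\rA$ for (ii)$\Rightarrow$(iii), trapping $E_f$ in a hyperbolic ball about a point $\zeta_0\in E_f$ for (iii)$\Rightarrow$(i), and \eqref{eq:qA} for (iii)$\Leftrightarrow$(iv). In (iii)$\Rightarrow$(i) you make the Euclidean bound $\tanh(M/2)$ explicit, where the paper only notes $\overline B_{\D}(\zeta_0,\dA(f))\Subset\D$; your reading of $\rA,\dA$ as possibly infinite suprema and infima is a useful clarification that the paper leaves implicit.
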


\begin{proof}
If $\norm{\omega_f}_\infty\le k<1$, then $E_f$ lies in the compact Euclidean disk $|\zeta|\le k$, hence has finite hyperbolic diameter and radius. Since $\dA\le2\rA$ always holds, (ii) implies (iii).  Conversely, if $E_f$ has finite hyperbolic diameter, choose $\zeta_0\in E_f$.  Then
\(
 E_f\subset \overline B_{\D}(\zeta_0,\dA(f)),
\)
which is compactly contained in $\D$, proving (i).  Finally, (iii) and (iv) are equivalent by \eqref{eq:qA}.
\end{proof}

\vskip.20in
\section{Optimal affine balancing}\label{sec:optimal}

We now prove the central minimization theorem.  The next theorem is a slightly expanded form of Theorem \ref{thm:intro-optimal}; in particular, the proof below is the proof of Theorem \ref{thm:intro-optimal}.

\begin{definition}
An orientation-preserving harmonic mapping $f$ with uniformly bounded differential distortion is called \emph{affine-balanced}, or simply \emph{balanced}, if
\(
 \cA(f)=0.
\)
Equivalently, the smallest hyperbolic disk containing $E_f$ is centered at $0$.
\end{definition}

For an affine mapping $A$ as in \eqref{eq:A-def}, let $T_A$ be its induced disk automorphism from \eqref{eq:T-A}.  Since
\[
 d_{\D}(0,\zeta)=2\arctanh|\zeta|,
\]
we have
\begin{equation}\label{eq:logK-hyp}
 \log K(A\circ f)
 =\sup_{z\in\D}d_{\D}(0,\omega_{A\circ f}(z)).
\end{equation}

\begin{theorem}
\label{thm:optimal}
Let $f$ be an orientation-preserving harmonic mapping in $\D$ with $\norm{\omega_f}_\infty<1$.  Then
\begin{equation}\label{eq:optimal-main}
 \log\KA(f)
 =\rA(f)
 =\inf_A\log K(A\circ f),
\end{equation}
where $A$ ranges over all orientation-preserving target-affine mappings.  The infimum is attained.

Let $c=\cA(f)$.  An affine mapping $A$ is minimizing if and only if
\begin{equation}\label{eq:min-characterization}
 T_A(c)=0.
\end{equation}
Any two minimizing affine mappings differ by an orientation-preserving Euclidean similarity.  In particular, the balanced representative is unique up to similarity.
\end{theorem}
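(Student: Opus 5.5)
The plan is to reduce everything to the hyperbolic geometry of $E_f=\overline{\omega_f(\D)}$ using \eqref{eq:logK-hyp} and \eqref{eq:omega-transform}. Fix an affine $A$ and write $T=T_A$. Since $\omega_{A\circ f}=T\circ\omega_f$ and $d_{\D}(0,\cdot)$ is continuous, \eqref{eq:logK-hyp} gives
\[
 \log K(A\circ f)=\sup_{\zeta\in E_f}d_{\D}(0,T(\zeta))=\sup_{\zeta\in E_f}d_{\D}(T^{-1}(0),\zeta)=r_{E_f}\bigl(T^{-1}(0)\bigr),
\]
where the middle equality holds because $T$ is an isometry. Thus the affine problem becomes the problem of minimizing $r_{E_f}(p)$ over $p=T_A^{-1}(0)$.

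By Lemma \ref{lem:affine-aut}, every automorphism of $\D$ is some $T_A$. Since $\Aut(\D)$ acts transitively, $p$ ranges over all of $\D$ as $A$ varies. Hence
\[
 \inf_A\log K(A\circ f)=\inf_{p\in\D}r_{E_f}(p)=\rad_{\D}(E_f)=\rA(f)=\log\KA(f),
\]
and the last equality is \eqref{eq:kA-KA}. $E_f$ is bounded by Proposition \ref{prop:bounded-iff-qc}, so the circumcenter $c$ exists and is unique. The infimum is therefore attained exactly when $T_A^{-1}(0)=c$, that is, when $T_A(c)=0$; this is \eqref{eq:min-characterization}. Such $A$ exist, again by Lemma \ref{lem:affine-aut}.

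For uniqueness up to similarity, let $A_1,A_2$ both be minimizing. The main point is to identify which affine maps induce rotations. I will work with the linear parts, since translations act trivially on dilatations. Set $B=A_2\circ A_1^{-1}$, which is orientation-preserving affine. Then $T_B=T_{A_2}\circ T_{A_1}^{-1}$, using the composition rule $T_{B\circ A}=T_B\circ T_A$, which follows from \eqref{eq:omega-transform} applied twice. Since $T_B(0)=0$, $T_B$ is a rotation. Writing $B(w)=aw+b\overline w+c'$, we have $T_B(0)=\overline b/a$, which forces $b=0$. So $B$ is a Euclidean similarity and $A_2=B\circ A_1$. The converse also holds: postcomposing a minimizer with a similarity rotates the dilatation, so the result is still minimizing.

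I expect the only delicate step to be the one-line justification of $T_{B\circ A}=T_B\circ T_A$. I would derive it directly from the transformation \eqref{eq:omega-transform} applied to $A\circ f$ and then to $B\circ(A\circ f)$, taking $f$ to be a map with nonconstant dilatation. Alternatively, a direct matrix computation works: the map $(a,b)\mapsto\begin{pmatrix}\overline a&\overline b\\ b&a\end{pmatrix}$ is multiplicative.

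For the balanced representative, take any minimizing $A_0$. Then $\cA(A_0\circ f)=T_{A_0}(c)=0$ by Proposition \ref{prop:full-invariance}, and uniqueness up to similarity follows from the previous paragraph.
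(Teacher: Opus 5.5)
Your proposal is correct and follows essentially the same route as the paper: you rewrite $\log K(A\circ f)$ as $r_{E_f}(T_A^{-1}(0))$, use the surjectivity in Lemma~\ref{lem:affine-aut} and uniqueness of the circumcenter, and then observe that $T_{A_2\circ A_1^{-1}}(0)=0$ forces the anti-linear coefficient to vanish. Your justification of the composition rule $T_{B\circ A}=T_B\circ T_A$, via the multiplicative matrix representation, supplies a step the paper only invokes, and your remark on the converse (a similarity postcomposed with a minimizer is again minimizing) is a harmless addition.
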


The following argument proves Theorem \ref{thm:intro-optimal}. Indeed, it proves the slightly stronger formulation of Theorem \ref{thm:optimal}.

\begin{proof}[Proof of Theorem \ref{thm:intro-optimal}]
By \eqref{eq:omega-transform}, \eqref{eq:logK-hyp},  and hyperbolic invariance, we obtain
\begin{align*}
 \log K(A\circ f)
 =\sup_{\zeta\in E_f}d_{\D}(0,T_A(\zeta))
 =\sup_{\zeta\in E_f}d_{\D}(T_A^{-1}(0),\zeta)
 =r_{E_f}(T_A^{-1}(0)).
\end{align*}
By Lemma \ref{lem:affine-aut}, as $A$ varies, $T_A^{-1}(0)$ ranges over all of $\D$.  Therefore,
\[
 \inf_A\log K(A\circ f)
 =\inf_{p\in\D}r_{E_f}(p)
 =\rA(f).
\]
Since $E_f$ is hyperbolically bounded, it has a unique circumcenter $c$.  Hence the minimum is attained exactly when $T_A^{-1}(0)=c$, equivalently, $T_A(c)=0$.

If $A_1$ and $A_2$ are both minimizing, set $C=A_2\circ A_1^{-1}$.  By the composition rule for the induced disk automorphisms, we have
\(
 T_C=T_{A_2}\circ T_{A_1}^{-1},
\)
and this automorphism fixes $0$.  Hence it is a rotation.  If $C(w)=\alpha w+\beta\overline w+\gamma$, then $T_C(0)=\overline\beta/\alpha=0$, so $\beta=0$.  Therefore, $C(w)=\alpha w+\gamma$ is an orientation-preserving Euclidean similarity.  Thus the minimizing gauge is unique modulo such similarities.

Finally, exponentiating
\[
 \inf_A\log K(A\circ f)=\rA(f)
\]
gives
\[
 \inf_A K(A\circ f)=e^{\rA(f)}=\KA(f),
\]
which is the identity stated in Theorem \ref{thm:intro-optimal}.  If $f$ is globally univalent, every orientation-preserving affine postcomposition remains a homeomorphism onto its image, so this maximal differential distortion is precisely the quasiconformal constant.  The final assertion of Theorem \ref{thm:intro-optimal} follows.
\end{proof}

Two explicit examples are provided to illustrate Theorem \ref{thm:intro-optimal}.

\begin{example} Consider the mapping \begin{equation}\label{63}
 f_k(z)=z+\frac{k}{2}\overline{z}^2\quad({0<k<1}).
\end{equation} Here \(\omega_{f_k}(\mathbb D)=k\mathbb D\), whose hyperbolic circumcenter is the origin. The identity mapping is already a minimizer, so \[K_{\text{aff}}(f_k)=K(f_k)=\frac{1+k}{1-k}.\] Any nontrivial target-affine postcomposition increases the maximal differential eccentricity. Since \(f_k\) is globally univalent, \(K_{\text{aff}}(f_k)\) is indeed the minimal quasiconformal constant within the family \(\{A\circ f_k\}\).
\end{example}

\begin{example}
Choose \[\omega(z)=k\,\frac{z+a}{1+\overline{a}z}\] with \(0<k<1\) and \(0<|a|<1\), a rotated Blaschke factor. The mapping \[f(z)=z+\overline{\int_0^z\omega(\zeta)\,d\zeta}\] is globally univalent. The image set \(\omega_f(\mathbb D)\) is a disk whose hyperbolic circumcenter is displaced away from the origin. The identity mapping is not a minimizer. Theorem~\ref{thm:intro-optimal} guarantees an optimal real-affine postcomposition \(A^*\), shifting the hyperbolic circumcenter to zero, which yields the minimal value \(K_{\text{aff}}(f)=K(A^*\circ f)\). This illustrates that target‑affine normalization can strictly reduce the maximal differential eccentricity of a harmonic mapping.\end{example}


An explicit balancing mapping is useful.  If $c=\cA(f)$, define
\begin{equation}\label{eq:Ac}
 A_c(w)=w-\overline c\,\overline w.
\end{equation}
Then
\begin{equation}\label{eq:phi-c}
 T_{A_c}(\zeta)=\frac{\zeta-c}{1-\overline c\,\zeta}.
\end{equation}
Thus $A_c\circ f$ is balanced.

\begin{corollary}\label{cor:balanced-k}
If $f$ is balanced, then
\begin{equation}\label{eq:balanced-k}
 \norm{\omega_f}_\infty=\kA(f)
 =\tanh\frac{\rA(f)}2,
\end{equation}
and
\begin{equation}\label{eq:balanced-K}
 K(f)=\KA(f)=e^{\rA(f)}.
\end{equation}
For an arbitrary affine representative $F=A\circ f$,
\(
 K(F)\ge\KA(f).
\)
\end{corollary}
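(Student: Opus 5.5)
The plan is to read everything off the identity established in the proof of Theorem~\ref{thm:optimal}. Namely, for every orientation-preserving affine $A$,
\[
 \log K(A\circ f)=r_{E_f}\bigl(T_A^{-1}(0)\bigr).
\]
We specialize this first to $A=\mathrm{id}$ and then to a general $A$. The steps, in order, are as follows.

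First, suppose $f$ is balanced, so $\cA(f)=0$. The circumcenter of $E_f$ is then $0$, and the radius functional evaluated at $0$ equals $\rA(f)$. Since $d_{\D}(0,\zeta)=2\arctanh|\zeta|$ is increasing in $|\zeta|$, we have
\[
 r_{E_f}(0)=\sup_{\zeta\in E_f}2\arctanh|\zeta|=2\arctanh\Bigl(\sup_{\zeta\in E_f}|\zeta|\Bigr).
\]
The supremum of $|\zeta|$ over $E_f=\overline{\omega_f(\D)}$ equals $\norm{\omega_f}_\infty$, because taking the closure does not change the supremum of the continuous function $|\cdot|$. This gives $2\arctanh\norm{\omega_f}_\infty=\rA(f)$, which is \eqref{eq:balanced-k} after applying $\tanh(\cdot/2)$.

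Second, we obtain \eqref{eq:balanced-K} from \eqref{eq:global-K}:
\[
 K(f)=\frac{1+k(f)}{1-k(f)}=\frac{1+\kA(f)}{1-\kA(f)}.
\]
By \eqref{eq:kA-KA} this equals $e^{\rA(f)}$. One can also see it directly, since $\log K(f)=r_{E_f}(0)=\rA(f)$ by \eqref{eq:logK-hyp} with $A=\mathrm{id}$.

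Third, we prove the lower bound for an arbitrary representative $F=A\circ f$. By \eqref{eq:optimal-main},
\[
 \log K(F)\ge\inf_{A'}\log K(A'\circ f)=\rA(f).
\]
Exponentiating gives $K(F)\ge\KA(f)$. An equivalent route is to note that $\rA(F)=\rA(f)$ by Proposition~\ref{prop:full-invariance} and that $\log K(F)=r_{E_F}(0)\ge\rad_{\D}(E_F)$.

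There is no real obstacle here. The only points needing care are the passage from the closure $E_f$ to the supremum norm of $\omega_f$, and the fact that $E_f$ is hyperbolically bounded. Boundedness is guaranteed by Proposition~\ref{prop:bounded-iff-qc}, since balancedness presupposes uniformly bounded distortion.
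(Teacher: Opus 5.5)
Your proposal is correct and follows essentially the same route as the paper: since the circumcenter is $0$, $\rA(f)=\sup_{\zeta\in E_f}d_{\D}(0,\zeta)=2\arctanh\norm{\omega_f}_\infty$, and the lower bound for $K(A\circ f)$ is read off from Theorem~\ref{thm:optimal}. Your extra remarks, that the supremum over the closure equals $\norm{\omega_f}_\infty$ and that $2\arctanh$ commutes with the supremum, simply fill in details the paper leaves implicit.
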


\begin{proof}
If the circumcenter is $0$, then
\[
 \rA(f)=\sup_{\zeta\in E_f}d_{\D}(0,\zeta)
 =2\arctanh\norm{\omega_f}_\infty.
\]
The formulas follow.  The last assertion is Theorem~\ref{thm:optimal}.
\end{proof}

The minimization admits a useful interpretation.  The disk coordinate $0$ corresponds to the Euclidean conformal structure in the target.  An affine mapping changes that reference conformal structure.  The balanced gauge chooses the reference structure minimizing the worst hyperbolic distance to all conformal structures encoded by $\omega_f(z)$.  In this sense, $\cA(f)$ is a Chebyshev center in the Teichm\"uller disk of linear conformal structures.

\begin{proposition}
\label{prop:center-equiv}
For every orientation-preserving real-affine mapping $A$, we have
\(
 \cA(A\circ f)=T_A(\cA(f)).
\)
Consequently, the assignment $f\mapsto\cA(f)$ is not invariant but is equivariant, while $\rA$, $\dA$, $\KA$, and $\qA$ are invariant.
\end{proposition}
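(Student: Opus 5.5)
The plan is to reduce the statement to the transformation rule \eqref{eq:omega-transform} together with Lemma~\ref{lem:rad-basic}(ii). This is essentially the argument already used in Proposition~\ref{prop:full-invariance}(i), now stated as the equivariance of the circumcenter.

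First, by \eqref{eq:omega-transform} we have $\omega_{A\circ f}=T_A\circ\omega_f$ pointwise. Hence $\omega_{A\circ f}(\D)=T_A(\omega_f(\D))$. Since $T_A$ is a homeomorphism of $\D$ (Lemma~\ref{lem:affine-aut}), it commutes with taking closures in $\D$. This gives $E_{A\circ f}=T_A(E_f)$.

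The only point needing a little care is that $E_f$ is the closure in $\D$, not in $\C$. Because $\norm{\omega_f}_\infty<1$, the set $E_f$ is compact in $\D$, and $T_A$ maps it onto a compact subset of $\D$. Consequently $A\circ f$ again has hyperbolically bounded dilatation range, and $\cA(A\circ f)$ is defined.

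Next, apply Lemma~\ref{lem:rad-basic}(ii) with $T=T_A$ and $E=E_f$. Since $T_A$ is a hyperbolic isometry, $r_{T_A(E_f)}(T_A(p))=r_{E_f}(p)$ for every $p\in\D$. Therefore $T_A(c)$ minimizes $r_{T_A(E_f)}$ exactly when $c$ minimizes $r_{E_f}$. By uniqueness of circumcenters in the CAT$(0)$ space $\D$, this yields $\cA(A\circ f)={\rm c}_{\rm hyp}(T_A(E_f))=T_A(\cA(f))$.

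For the second sentence of the statement, invariance of $\rA$, $\dA$, $\KA$ and $\qA$ is Proposition~\ref{prop:full-invariance}(i). To see that $\cA$ itself is genuinely not invariant, we use Lemma~\ref{lem:affine-aut}: $T_A$ can be any disk automorphism. Whenever $c=\cA(f)\neq0$, the map $A_c$ from \eqref{eq:Ac} gives $\cA(A_c\circ f)=0\ne c$. Whenever $c=0$, choosing $T_A$ with $T_A(0)\neq0$ moves the center. Thus $f\mapsto\cA(f)$ is equivariant but not invariant.

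I do not expect a real obstacle; the only step requiring attention is the closure/compactness bookkeeping for $E_{A\circ f}$ described above.
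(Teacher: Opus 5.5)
Your proposal is correct and follows essentially the paper's argument. The paper's proof just points to Proposition~\ref{prop:full-invariance}(i), which is proved exactly as you do: $E_{A\circ f}=T_A(E_f)$ from \eqref{eq:omega-transform}, combined with isometry invariance and uniqueness of circumcenters in Lemma~\ref{lem:rad-basic}(ii). Your closure bookkeeping and your explicit check that $\cA$ genuinely moves (using $A_c$ from \eqref{eq:Ac} and the surjectivity in Lemma~\ref{lem:affine-aut}) are harmless additions that the paper leaves implicit.
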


\begin{proof}
This is Proposition \ref{prop:full-invariance}, but it is worth isolating because it shows that the center itself carries the exact affine covariance needed to define a canonical gauge.
\end{proof}
\vskip.20in
\section{Support points and the hyperbolic Jung theorem}\label{sec:support-jung}

The hyperbolic circumcenter is determined by finitely many support values.  We prove this directly by first variation.

\subsection{A support lemma in the hyperbolic plane}

Let $E\subset\D$ be nonempty and compact, let $c$ be its circumcenter, and let $R=\rad_{\D}(E)>0$.  Define the contact set
\begin{equation}\label{eq:contact-set}
 \Gamma(E)=\{\zeta\in E:d_{\D}(c,\zeta)=R\}.
\end{equation}
For $\zeta\in\Gamma(E)$, let $u_\zeta\in T_c\D$ be the unit tangent vector at $c$ pointing along the geodesic from $c$ to $\zeta$.

\begin{lemma}
\label{lem:contact-balance}
With the notation above,
\begin{equation}\label{eq:contact-hull}
 0\in\conv\{u_\zeta:\zeta\in\Gamma(E)\}
 \subset T_c\D\cong\R^2.
\end{equation}
\end{lemma}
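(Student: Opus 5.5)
We argue by contradiction, using the first variation of the distance function at $c$. First, $\Gamma(E)$ is nonempty and compact. Indeed, $E$ is compact and $\zeta\mapsto d_{\D}(c,\zeta)$ is continuous, so the supremum $r_E(c)=R$ is attained. Since $R>0$, no contact point equals $c$, so each $u_\zeta$ is well defined. The map $\zeta\mapsto u_\zeta$ is continuous on $\Gamma(E)$: conjugate by the automorphism sending $c$ to $0$, and then $u_\zeta$ is simply $\zeta/|\zeta|$ in the new coordinates. Hence $U=\{u_\zeta:\zeta\in\Gamma(E)\}$ is a compact subset of the unit circle, and $\conv U$ is compact and convex.

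Suppose $0\notin\conv U$. By the Euclidean separation theorem in $T_c\D\cong\R^2$, there is a unit vector $v$ and a number $\varepsilon>0$ with $\ip{v}{u_\zeta}\ge\varepsilon$ for all $\zeta\in\Gamma(E)$. Let $\gamma(t)=\exp_c(tv)$ be the geodesic leaving $c$ in direction $v$. For $\zeta\ne c$ the first variation formula gives
\[
\frac{d}{dt}\Big|_{t=0^+}d_{\D}(\gamma(t),\zeta)=-\ip{v}{u_\zeta}.
\]
So every contact point gets strictly closer, at rate at least $\varepsilon$, when $c$ is pushed slightly toward $v$.

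The key step, and the expected main obstacle, is to make this uniform over all of $E$, including the points near but not on $\Gamma(E)$. I would normalize $c=0$ by Lemma~\ref{lem:rad-basic}(ii). Then $\zeta\mapsto u_\zeta$ extends continuously to the annular region $\{\zeta\in E: d_{\D}(0,\zeta)\ge R/2\}$. There is also a uniform second-order bound: $d_{\D}(\gamma(t),\zeta)\le d_{\D}(0,\zeta)-t\ip{v}{u_\zeta}+Ct^2$ for all such $\zeta$ and all $t\in[0,t_0]$. This follows either from a Taylor expansion of the explicit formula $2\arctanh|(\zeta-\gamma(t))/(1-\overline{\gamma(t)}\zeta)|$ with $\gamma(t)=\tanh(t/2)v$, or from the CAT$(-1)$ comparison with the Euclidean law of cosines.

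By continuity there is a neighbourhood $N$ of $\Gamma(E)$ in $E$ on which $\ip{v}{u_\zeta}\ge\varepsilon/2$. On the compact set $E\setminus N$ we have $d_{\D}(0,\zeta)\le R-\eta$ for some $\eta>0$. We then split $E$ into two parts. On $N$, the distance satisfies $d_{\D}(\gamma(t),\zeta)\le R-t\varepsilon/2+Ct^2<R$ for small $t>0$. On $E\setminus N$, the triangle inequality gives $d_{\D}(\gamma(t),\zeta)\le R-\eta+t<R$ for $t<\eta$.

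Combining the two estimates, $r_E(\gamma(t))<R=\rad_{\D}(E)$ for some small $t>0$. This contradicts the definition of the circumradius (and also the minimality of $c$). Hence $0\in\conv U$, which is \eqref{eq:contact-hull}.
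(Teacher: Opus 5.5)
Your proof is correct and follows the same argument as the paper: strict separation in $T_c\D$, first variation along $\exp_c(tv)$, a uniform decrease on a relative neighbourhood of $\Gamma(E)$, and a compactness/triangle-inequality bound on the complement, which together contradict the minimality of the circumradius. The only difference is cosmetic: you get uniformity from a bounded second-order Taylor remainder, whereas the paper uses joint continuity of $\partial_t d_{\D}(c_t,\zeta)$ near $\{0\}\times\Gamma(E)$ and then integrates.
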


\begin{proof}
Suppose not.  Since the convex hull is compact and does not contain $0$, the strict separation theorem gives a unit vector $v\in T_c\D$ and $\eta>0$ such that
\begin{equation}\label{eq:strict-sep}
 \ip{u_\zeta}{v}\ge\eta
 \quad(\zeta\in\Gamma(E)).
\end{equation}
Let $c_t=\exp_c(tv)$ for small $t\ge0$ and put
\(
 F(t,\zeta)=d_{\D}(c_t,\zeta).
\)
Since $R>0$, the compact contact set is separated from $c$.  Thus $F$ is smooth in the first variable on a neighborhood of $\{0\}\times\Gamma(E)$, and the first variation formula gives
\begin{equation}\label{eq:first-var}
 \partial_tF(0,\zeta)
 =-\ip{u_\zeta}{v}
 \le-\eta
 \quad(\zeta\in\Gamma(E)).
\end{equation}
By compactness and continuity of $\partial_tF$, there exists a relative neighborhood $U$ of $\Gamma(E)$ in $E$ and $\tau>0$ such that
\[
 \partial_tF(t,\zeta)\le-\frac{\eta}{2}
 \quad(0\le t\le\tau,\ \zeta\in U).
\]
Consequently,
\[
 F(t,\zeta)\le F(0,\zeta)-\frac{\eta t}{2}
 \le R-\frac{\eta t}{2}
 \quad(0<t\le\tau,\ \zeta\in U).
\]
On the compact complement $E\setminus U$, one has
\(
 \max_{\zeta\in E\setminus U}F(0,\zeta)=R-\varepsilon
\)
for some $\varepsilon>0$.  Uniform continuity of $F$ on a compact parameter interval then implies, after decreasing $t>0$ if necessary,
\[
 F(t,\zeta)\le R-\frac{\varepsilon}{2}
 \quad(\zeta\in E\setminus U).
\]
Thus \(\sup_{\zeta\in E}F(t,\zeta)<R,\) contradicting the minimality of $c$.
\end{proof}

\begin{theorem}
\label{thm:support}
Let $E\subset\D$ be nonempty and compact, with circumcenter $c$ and radius $R>0$.  Then there exist $m\in\{2,3\}$ and contact points $\xi_1,\ldots,\xi_m\in\Gamma(E)$ such that $c$ is the circumcenter of $\{\xi_1,\ldots,\xi_m\}$.
More precisely:
\begin{enumerate}[label=\textup{(\roman*)}]
\item if $m=2$, then the tangent directions $u_{\xi_1}$ and $u_{\xi_2}$ are opposite, so $c$ is the midpoint of the geodesic segment $[\xi_1,\xi_2]$ and
\(
 d_{\D}(\xi_1,\xi_2)=2R;
\)
\item if $m=3$, then there are positive numbers $\lambda_1,\lambda_2,\lambda_3$ with $\lambda_1+\lambda_2+\lambda_3=1$ such that
\begin{equation}\label{eq:three-balance}
 \lambda_1u_{\xi_1}+\lambda_2u_{\xi_2}+\lambda_3u_{\xi_3}=0.
\end{equation}
\end{enumerate}
\end{theorem}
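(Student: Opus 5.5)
The plan has three parts. First, combine Lemma~\ref{lem:contact-balance} with Carath\'eodory's theorem in $T_c\D\cong\R^2$ to select two or three contact points. Next, show that $c$ remains the circumcenter of the selected finite set by a convexity argument. Finally, read off the two cases.

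By Lemma~\ref{lem:contact-balance}, $0\in\conv\{u_\zeta:\zeta\in\Gamma(E)\}$. The set $\Gamma(E)$ is compact, and $\zeta\mapsto u_\zeta$ is continuous on it, since $R>0$ keeps $\Gamma(E)$ away from $c$. Hence $\{u_\zeta\}$ is a compact subset of the unit circle, and its convex hull is the set of finite convex combinations. Carath\'eodory's theorem in $\R^2$ then writes $0$ as a convex combination of at most three vectors $u_{\xi_j}$, and we discard those with zero weight. A single unit vector cannot equal $0$, so at least two remain.

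If exactly two remain, $\lambda_1u_{\xi_1}+\lambda_2u_{\xi_2}=0$ with unit vectors forces $\lambda_1=\lambda_2=\tfrac12$ and $u_{\xi_2}=-u_{\xi_1}$. The two geodesics from $c$ then concatenate into a single geodesic through $c$. Since $d_{\D}(c,\xi_j)=R$, the point $c$ is the midpoint of $[\xi_1,\xi_2]$ and $d_{\D}(\xi_1,\xi_2)=2R$. The circumradius of $\{\xi_1,\xi_2\}$ is at least half the diameter, namely $R$, and $c$ attains this value; uniqueness of circumcenters gives the claim. If exactly three remain with positive weights, we obtain \eqref{eq:three-balance}.

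The main obstacle is showing, in the three-point case (and uniformly in general), that $c$ is the circumcenter of $S=\{\xi_1,\dots,\xi_m\}$. We have $r_S(c)=R$. For any $p\neq c$, write $p=\exp_c(tv)$ with $t>0$ and $|v|=1$. By the first variation formula \eqref{eq:first-var}, together with convexity of $s\mapsto d_{\D}(\exp_c(sv),\xi_j)$ along the geodesic (hyperbolic distance to a point is convex, by CAT$(0)$), we get
\[
 d_{\D}(p,\xi_j)\ge R-t\ip{u_{\xi_j}}{v}.
\]
Since $\sum_j\lambda_j\ip{u_{\xi_j}}{v}=0$ with all $\lambda_j>0$, some $j$ has $\ip{u_{\xi_j}}{v}\le0$, so $r_S(p)\ge R$. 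Therefore $\rad_{\D}(S)=R$ with $c$ a minimizer, and by uniqueness of the circumcenter (CAT$(0)$, Section~\ref{sec:prelim}) $c$ is the circumcenter of $S$. The same argument also covers $m=2$. The only delicate point is to justify the one-sided derivative bound through convexity rather than smoothness; this is legitimate because each $\xi_j$ is at positive distance from $c$.
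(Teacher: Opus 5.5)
Your proposal is correct and follows essentially the paper's route: you combine Lemma~\ref{lem:contact-balance} with Carath\'eodory's theorem to select the contact points, then use convexity of the distance along the geodesic from $c$, together with the first variation formula and the balance relation, to show that $c$ remains the circumcenter of the selected points. The only difference is cosmetic. You use the tangent-line lower bound $d_{\D}(p,\xi_j)\ge R-t\ip{u_{\xi_j}}{v}$ directly, which treats $m=2$ and $m=3$ uniformly. The paper instead argues by contradiction from a hypothetical smaller enclosing ball.
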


\begin{proof}
By Lemma \ref{lem:contact-balance}, $0$ lies in the convex hull of the contact directions.  Carath\'eodory's theorem for convex hulls~\cite[Theorem 17.1]{Rockafellar1970} in the two-dimensional vector space $T_c\D$ shows that $0$ lies in the convex hull of at most three such vectors.  One vector cannot suffice, so $m\in\{2,3\}$ after removing redundancy.

If $m=2$, two unit vectors have $0$ in their segment only when they are opposite.  Thus the corresponding geodesics form one geodesic through $c$, with equal endpoint distance $R$.  Hence $c$ is their midpoint.

If $m=3$ and no two already suffice, then $0$ lies in the interior of the Euclidean triangle spanned by the three unit vectors.  This gives strictly positive barycentric coefficients satisfying \eqref{eq:three-balance}.

It remains to see that $c$ is the circumcenter of the selected points.  If a ball of radius $R'<R$ contained them, let $p$ be its center.  In the two-point case, this contradicts $d(\xi_1,\xi_2)=2R$.  Assume therefore that $m=3$, put $L=d_{\D}(c,p)>0$ and let $\gamma:[0,L]\to\D$ be the unit-speed geodesic from $c$ to $p$, with $v=\gamma'(0)$.  For each $j$, the function
\(
 t\longmapsto d_{\D}(\gamma(t),\xi_j)
\)
is convex, because $\D$ is CAT$(0)$.  Its value at $t=L$ is strictly smaller than its value $R$ at $t=0$, so convexity implies that its right derivative at $0$ is negative.  By the first variation formula, this gives
\[
 \ip{u_{\xi_j}}{v}>0
 \quad(j=1,2,3).
\]
Multiplying by the positive coefficients in \eqref{eq:three-balance} and summing yields
\[
 0=\ip{\lambda_1u_{\xi_1}+\lambda_2u_{\xi_2}+\lambda_3u_{\xi_3}}{v}>0,
\]
which is a contradiction.  Thus the selected finite set has circumradius $R$ and circumcenter $c$.
\end{proof}

We now present the proof of Theorem \ref{thm:intro-support}.
\begin{proof}[Proof of Theorem \ref{thm:intro-support}]
Set
\[
 E=E_f=\overline{\omega_f(\D)},\quad
 c=\cA(f),\quad R=\rA(f)>0,
\]
and let
\[
 \Gamma=\{\zeta\in E:d_{\D}(c,\zeta)=R\}
\]
be the contact set.  It is nonempty and compact.  For $\zeta\in\Gamma$, let $u_\zeta$ be the unit tangent at $c$ pointing toward $\zeta$.

We first prove the balancing relation
\begin{equation}\label{eq:intro-contact-balance}
 0\in\conv\{u_\zeta:\zeta\in\Gamma\}.
\end{equation}
If this were false, strict separation in the Euclidean plane $T_c\D$ would give a unit vector $v$ and $\eta>0$ such that
\[
 \langle u_\zeta,v\rangle\ge\eta
 \quad(\zeta\in\Gamma).
\]
Put $c_t=\exp_c(tv)$.  The first variation formula gives
\[
 \left.\frac{d}{dt}\right|_{t=0}d_{\D}(c_t,\zeta)
 =-\langle u_\zeta,v\rangle\le-\eta
 \quad(\zeta\in\Gamma).
\]
Because $R>0$, the contact set stays away from $c$.  Compactness and continuity of the first derivative therefore give a neighborhood $U$ of $\Gamma$ and $t_0>0$ such that, for $0<t<t_0$ and $\zeta\in E\cap U$,
\[
 d_{\D}(c_t,\zeta)\le d_{\D}(c,\zeta)-\frac{\eta}{2}t
 \le R-\frac{\eta}{2}t.
\]
On the compact complement $E\setminus U$, there is an $\varepsilon>0$ with
\[
 d_{\D}(c,\zeta)\le R-\varepsilon.
\]
Since $d_{\D}(c,c_t)=t$, the triangle inequality gives, after decreasing $t_0$ if necessary,
\[
 d_{\D}(c_t,\zeta)\le R-\frac{\varepsilon}{2}
 \quad(\zeta\in E\setminus U,\ 0<t<t_0).
\]
Thus \[\sup_{\zeta\in E}d_{\D}(c_t,\zeta)<R,\] contradicting the defining minimality of the circumcenter.  This proves \eqref{eq:intro-contact-balance}.

Carath\'eodory's theorem in the two-dimensional vector space $T_c\D$ now yields contact points $\xi_1,\ldots,\xi_m$ with $m\le3$ whose tangent directions already have $0$ in their convex hull.  Since each $u_{\xi_j}$ is a unit vector, one point cannot suffice; hence $m\in\{2,3\}$.

If $m=2$, the two unit vectors must be opposite.  The geodesics from $c$ to $\xi_1$ and $\xi_2$ therefore form one geodesic, and both endpoints are at distance $R$ from $c$.  Hence $c$ is the hyperbolic midpoint and
\(
 d_{\D}(\xi_1,\xi_2)=2R.
\)

Assume $m=3$ and no pair suffices.  Then $0$ lies in the interior of the Euclidean triangle spanned by the three unit directions, so there are $\lambda_j>0$ with $\lambda_1+\lambda_2+\lambda_3=1$ and
\[
 \lambda_1u_{\xi_1}+\lambda_2u_{\xi_2}+\lambda_3u_{\xi_3}=0.
\]
It remains only to check that $c$ is the circumcenter of these selected points.  Suppose instead that some $p$ satisfies
\[
 d_{\D}(p,\xi_j)<R\quad(j=1,2,3).
\]
Let $\gamma:[0,L]\to\D$ be the unit-speed geodesic from $c$ to $p$, where $L=d_{\D}(c,p)>0$, and let $v=\gamma\,'(0)$.  Since the hyperbolic disk is CAT$(0)$, each function
\[
 t\longmapsto d_{\D}(\gamma(t),\xi_j)
\]
is convex.  Its value at $L$ is strictly smaller than its value $R$ at $0$, so its right derivative at $0$ is negative.  By first variation, we obtain
\(
 \langle u_{\xi_j},v\rangle>0.
\)
Multiplying by $\lambda_j$ and summing gives
\[
 0=\left\langle\lambda_1u_{\xi_1}+\lambda_2u_{\xi_2}+\lambda_3u_{\xi_3},v\right\rangle>0,
\]
a contradiction.  Thus $c$ is the circumcenter of the selected two or three contact points, and all assertions of Theorem~\ref{thm:intro-support} follow.
\end{proof}

We now give two examples to illustrate Theorem~\ref{thm:intro-support}.

\begin{example}
Let \(\omega:\mathbb D\to\mathbb D\) be analytic such that \(\omega(\mathbb D)\) is a hyperbolic line segment with endpoints \(\xi_1,\xi_2\). Its hyperbolic circumcenter \(c\) is the hyperbolic midpoint of \(\xi_1\) and \(\xi_2\), and the hyperbolic circumradius equals \(d_{\mathbb D}(c,\xi_1)=d_{\mathbb D}(c,\xi_2)\). Only two extreme points are needed to determine \(c\), so \(m=2\). This realizes the two-point case of Theorem~\ref{thm:intro-support}. The mapping \[f(z)=z+\overline{\int_0^z\omega(\zeta)\,d\zeta}\] constructed via our realization lemma gives a concrete globally univalent harmonic mapping for this configuration.    
\end{example}

\begin{example}
Choose a scaled third‑order Blaschke product \(\omega\), such that \(\omega(\mathbb D)\) is a compact subset of \(\mathbb D\) whose hyperbolic circumcenter \(c\) cannot be determined by only two boundary points. There exist three points \(\xi_1,\xi_2,\xi_3\in\omega(\mathbb D)\) with \(d_{\mathbb D}(c,\xi_j)=R\). The outward unit tangent vectors at \(c\) toward each \(\xi_j\) have the origin inside their Euclidean convex hull within \(T_c\mathbb D\). This configuration realizes the three-point case of Theorem~\ref{thm:intro-support}. Again our realization lemma yields an explicit globally univalent harmonic mapping associated with this dilatation set. 
\end{example}

The support points need not be values $\omega_f(z)$ at interior points; they can be boundary cluster values in the closure.  This is unavoidable even for elementary examples such as $\omega(z)=kz$.

\subsection{Jung's inequality and sharp affine distortion bounds}

We use the sharp hyperbolic Jung theorem in dimension two.  In our curvature $-1$ convention it states that if a bounded subset $E$ of the hyperbolic plane has diameter at most $D$, then
\begin{equation}\label{eq:Dekster}
 \rad_{\D}(E)
 \le J(D),
 \quad
 J(D)=\arcsinh\!\left(\frac{2}{\sqrt3}\sinh\frac D2\right).
\end{equation}
This is the case $n=2$ of Dekster~\cite{Dekster1995}; see also the formulation in Nasser-Rainio-Vuorinen~\cite[Theorem 3.3]{NasserRainioVuorinen2022}.  The constant is sharp and corresponds to a regular equilateral hyperbolic triangle.

The following theorem contains the inequality part of Theorem~\ref{thm:intro-jung} and records the equivalent estimate for $\KA$.  The sharpness assertion in Theorem~\ref{thm:intro-jung} will be completed in Section \ref{sec:examples}.

\begin{theorem}
\label{thm:affine-jung}
For every orientation-preserving harmonic mapping $f$ with $\norm{\omega_f}_\infty<1$,
\begin{equation}\label{eq:affine-jung}
 \frac{\dA(f)}2
 \le \rA(f)
 \le J(\dA(f)).
\end{equation}
Equivalently,
\begin{equation}\label{eq:KA-jung}
 \exp\left(\frac{\dA(f)}{2}\right)
 \le \KA(f)
 \le \exp(J(\dA(f))).
\end{equation}
Both bounds are sharp among globally univalent harmonic quasiconformal mappings.
\end{theorem}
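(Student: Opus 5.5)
The two inequalities in \eqref{eq:affine-jung} are purely metric statements about the compact set $E_f=\overline{\omega_f(\D)}$. I will therefore prove them on $E_f$ and reserve the function-theoretic input for sharpness. By Proposition~\ref{prop:bounded-iff-qc}, $E_f$ is a compact, hyperbolically bounded subset of $\D$. Its diameter is $\dA(f)$ and its circumradius is $\rA(f)$.

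The lower bound $\dA(f)/2\le\rA(f)$ is Lemma~\ref{lem:rad-basic}(i). The upper bound is Dekster's inequality \eqref{eq:Dekster} applied with $D=\dA(f)$. The equivalent form \eqref{eq:KA-jung} follows by exponentiating, since $\KA(f)=e^{\rA(f)}$ by \eqref{eq:kA-KA} and $\exp$ is increasing.

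For sharpness I need two globally univalent harmonic quasiconformal mappings.

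\emph{Lower bound.} Take $\omega(z)=kz$ with $0<k<1$. Then $E_f=\{|\zeta|\le k\}$, and this set is symmetric about $0$. Its circumcenter is therefore $0$, $\rA=2\arctanh k$, and $\dA=d_{\D}(k,-k)=4\arctanh k=2\rA$. A simpler choice is an $\omega$ whose image is a hyperbolic segment, which has the same equality. A concrete univalent mapping with $\omega_f(z)=kz$ is $f_k(z)=z+\tfrac{k}{2}\overline z^{2}$ from the earlier example. It is univalent because $|\partial_{\bar z}(f-z)|\le k<1$: its anti-analytic part is Lipschitz with constant $k$, so $f$ is injective.

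\emph{Upper bound.} Equality in Dekster's theorem requires a set containing the three vertices of a regular hyperbolic triangle centred at $0$, with circumradius $R$ and $\operatorname{hconv}$ of diameter equal to the side length $D$. Such a triangle has vertices $\rho,\rho e^{2\pi i/3},\rho e^{4\pi i/3}$. I want an analytic $\omega:\D\to\D$ with $\overline{\omega(\D)}$ containing these three vertices and contained in their hyperbolic convex hull, the geodesic triangle $\Delta$. By Lemmas~\ref{lem:rad-basic}(iv) and \ref{lem:diam-convex-hull}, $E_f$ then has $\rad_{\D}=R$ and $\diam_{\D}=D$. The quantities are related by $\sinh R=\tfrac{2}{\sqrt3}\sinh\tfrac D2$ from hyperbolic trigonometry, which is exactly the equality case of $J$.

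I will take $\omega$ to be the Riemann map of $\D$ onto the interior of $\Delta$. Because $\Delta$ is a Jordan domain, Carathéodory's theorem gives $\overline{\omega(\D)}=\Delta$. Then I set $f(z)=z+\overline{\int_0^z\omega}$. This has $h=z$, $g'=\omega$, and $\|\omega\|_\infty=\rho<1$. Univalence follows from the same Lipschitz argument as before: $|g(z_1)-g(z_2)|\le\rho|z_1-z_2|<|z_1-z_2|$. This is presumably the paper's ``realization lemma''.

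The main obstacle is making sharpness honest rather than just asserting extremal configurations. Specifically, I must verify that the equilateral triangle really attains equality in Dekster's bound. I will do this by a direct computation in the hyperbolic triangle with centre $0$ and circumradius $R$. By symmetry the circumcenter is $0$. Applying the hyperbolic law of sines in the isosceles subtriangle with apex angle $2\pi/3$ gives $\sinh(D/2)=\sinh R\sin(\pi/3)$, which yields $R=J(D)$. Univalence and the closure identification are routine. In addition, rescaling $\rho$ shows that equality holds for every value of $\dA$, not just one.
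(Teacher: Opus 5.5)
Your proposal is correct and matches the paper's proof: Lemma~\ref{lem:rad-basic}(i) gives the lower bound, Dekster's theorem the upper bound, and exponentiation the $\KA$ form, with sharpness from $f_k(z)=z+\frac{k}{2}\overline z^{2}$ and from the Riemann map onto an equilateral hyperbolic triangle fed into the realization construction. Two small corrections: drop the aside about an $\omega$ whose image is a hyperbolic segment, since a nonconstant analytic map has open image; and $\sinh(D/2)=\frac{\sqrt3}{2}\sinh R$ comes from the right triangle obtained by bisecting the apex angle (or from the law of cosines in the isosceles triangle), not directly from the law of sines there.
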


\begin{proof}
The lower bound follows from $\dA(f)\le2\rA(f)$ in Lemma~\ref{lem:rad-basic}.  The upper bound is Dekster's theorem applied to $E_f$.  Since $\KA(f)=e^{\rA(f)}$, exponentiation gives \eqref{eq:KA-jung}.  This proves the inequality part of Theorem~\ref{thm:intro-jung}; the two sharpness constructions are given in Section~\ref{sec:examples}.
\end{proof}

It is sometimes preferable to express the result using the pseudohyperbolic oscillation $q=\qA(f)$.  Since
\[
 q=\tanh\frac{\dA(f)}2,
\]
we obtain a closed formula for the Jung upper bound on $\kA$.

\begin{corollary}
\label{cor:pseudohyp-jung}
Let $q=\qA(f)\in[0,1)$.  Then
\begin{equation}\label{eq:kA-lower-q}
 \kA(f)\ge \frac{q}{1+\sqrt{1-q^2}}
\end{equation}
and
\begin{equation}\label{eq:kA-upper-q}
 \kA(f)
 \le
 j(q):=
 \frac{2q}
 {\sqrt{3+q^2}+\sqrt{3(1-q^2)}}.
\end{equation}
Consequently,
\begin{equation}\label{eq:KA-q}
 \sqrt{\frac{1+q}{1-q}}
 \le \KA(f)
 \le \frac{1+j(q)}{1-j(q)}.
\end{equation}
\end{corollary}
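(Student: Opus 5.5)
The plan is to translate Theorem~\ref{thm:affine-jung} into pseudohyperbolic variables through the monotone map $t\mapsto\tanh(t/2)$. By definition $\kA(f)=\tanh(\rA(f)/2)$ and $q=\tanh(\dA(f)/2)$. Since $\tanh(\cdot/2)$ is strictly increasing on $[0,\infty)$, the two inequalities $\dA/2\le\rA\le J(\dA)$ become
\[
\tanh\frac{\dA}{4}\le\kA(f)\le\tanh\frac{J(\dA)}{2}.
\]
Each side then has to be rewritten in terms of $q$.

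For the lower bound I use the half-angle identity $\tanh(s/2)=\tanh s/(1+\sqrt{1-\tanh^2 s})$ with $s=\dA/2$. This gives $\tanh(\dA/4)=q/(1+\sqrt{1-q^2})$, which is \eqref{eq:kA-lower-q}.

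For the upper bound I set $D=\dA$ and express everything through $\sinh(D/2)=q/\sqrt{1-q^2}$. Put $y=J(D)$, so $\sinh y=\frac{2}{\sqrt3}\,\frac{q}{\sqrt{1-q^2}}$. Then
\[
\cosh y=\sqrt{1+\sinh^2y}=\sqrt{\frac{3+q^2}{3(1-q^2)}}.
\]
Applying $\tanh(y/2)=\sinh y/(1+\cosh y)$ and multiplying numerator and denominator by $\sqrt{3(1-q^2)}$, I get
\[
\tanh\frac y2=\frac{2q}{\sqrt{3(1-q^2)}+\sqrt{3+q^2}}=j(q),
\]
which is \eqref{eq:kA-upper-q}. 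I expect this algebra to be the only step needing care, mainly keeping track of the common factor $\sqrt{3(1-q^2)}$. The case $q=0$ is consistent, since it gives $j(0)=0$.

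Finally, $K\mapsto(1+k)/(1-k)$ is increasing, so \eqref{eq:KA-q} follows by applying it to the two bounds on $\kA$. The upper bound is immediate. For the lower bound, write $k_0=q/(1+\sqrt{1-q^2})=\tanh(D/4)$. Then $(1+k_0)/(1-k_0)=e^{D/2}$, and $e^{D}=(1+q)/(1-q)$ because $q=\tanh(D/2)$. Hence $(1+k_0)/(1-k_0)=\sqrt{(1+q)/(1-q)}$, which matches \eqref{eq:KA-jung}.
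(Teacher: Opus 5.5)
Your proposal is correct and follows the paper's proof essentially step for step. Both arguments push Theorem~\ref{thm:affine-jung} through the increasing map $t\mapsto\tanh(t/2)$, use the half-angle formula for the lower bound, and evaluate $\tanh(\frac12\arcsinh X)=X/(1+\sqrt{1+X^2})$ with $X=\frac{2q}{\sqrt3\sqrt{1-q^2}}$ (your $\sinh y/(1+\cosh y)$ is the same identity), before passing to $\KA$ by monotonicity of $k\mapsto(1+k)/(1-k)$; your explicit check that the lower $\KA$ bound equals $e^{\dA(f)/2}=\sqrt{(1+q)/(1-q)}$ spells out a step the paper leaves implicit.
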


\begin{proof}
The lower bound follows from
\[
 \kA=\tanh\frac{\rA}{2}
 \ge\tanh\frac{\dA}{4}
 =\tanh\left(\frac12\arctanh q\right)
 =\frac{q}{1+\sqrt{1-q^2}}.
\]
For the upper bound, set
\[
 X=\frac{2}{\sqrt3}\sinh\frac{\dA}{2}
 =\frac{2q}{\sqrt3\sqrt{1-q^2}}.
\]
Then
\[
 \kA\le\tanh\left(\frac12\arcsinh X\right)
 =\frac{X}{1+\sqrt{1+X^2}},
\]
which simplifies to \eqref{eq:kA-upper-q}.  The statement for $\KA$ follows from 
\[\KA=\frac{1+\kA}{1-\kA}.\]
\end{proof}

For small oscillations, the Jung bounds admit a transparent Euclidean limit.

\begin{corollary}
\label{cor:small-jung}
As $D\to0$,
\begin{equation}\label{eq:J-small}
 J(D)=\frac{D}{\sqrt3}+O(D^3).
\end{equation}
Then
\[
 \frac D2\le\rA(f)\le\frac D{\sqrt3}+O(D^3)
 \ \text{when }\dA(f)=D\to0.
\]
\end{corollary}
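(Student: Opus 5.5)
The plan is to Taylor expand $J(D)=\arcsinh\bigl(\tfrac{2}{\sqrt3}\sinh\tfrac D2\bigr)$ at $D=0$ and then substitute into Theorem~\ref{thm:affine-jung}. Both $\sinh$ and $\arcsinh$ are odd analytic functions near $0$, so $J$ is an odd real-analytic function of $D$ on a neighbourhood of $0$. Its expansion therefore contains only odd powers, and it suffices to identify the linear coefficient: the remainder is then automatically $O(D^3)$.

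For the linear term, write
\[
\sinh\tfrac D2=\tfrac D2+\tfrac{D^3}{48}+O(D^5),
\]
so that
\[
X:=\tfrac{2}{\sqrt3}\sinh\tfrac D2=\tfrac{D}{\sqrt3}+\tfrac{D^3}{24\sqrt3}+O(D^5).
\]
Then use $\arcsinh X=X-\tfrac{X^3}{6}+O(X^5)$. This gives $J(D)=\tfrac{D}{\sqrt3}+O(D^3)$, and in fact the explicit cubic coefficient
\[
\tfrac{1}{24\sqrt3}-\tfrac{1}{18\sqrt3}=-\tfrac{1}{72\sqrt3},
\]
though only its boundedness matters here. To make the $O(D^3)$ uniform on, say, $0\le D\le1$, I would either invoke analyticity (a convergent power series has a remainder bounded by $CD^3$ on a compact subinterval of its disk of convergence) or apply Taylor's theorem with the Lagrange remainder to the smooth function $J$ on $[0,1]$.

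With $D=\dA(f)$, Theorem~\ref{thm:affine-jung} gives $\tfrac D2\le\rA(f)\le J(D)$, and substituting the expansion yields the displayed two-sided estimate as $D\to0$. The $O(D^3)$ constant is absolute, since it depends only on the function $J$ and not on $f$.

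There is no real obstacle in this argument. The only points needing care are the odd-function observation, which removes the $D^2$ term, and making sure the remainder is uniform rather than merely asymptotic.
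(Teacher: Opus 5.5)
Your proposal is correct and follows the paper's route: expand $\sinh\frac D2=\frac D2+O(D^3)$ and $\arcsinh x=x+O(x^3)$, compose them to get $J(D)=\frac{D}{\sqrt3}+O(D^3)$, and substitute into Theorem~\ref{thm:affine-jung}. Your odd-function observation is harmless but not needed, since composing two expansions with cubic error already gives the $O(D^3)$ remainder directly; your cubic coefficient $-\frac{1}{72\sqrt3}$ and your remark on uniformity of the constant are both correct.
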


\begin{proof}
Using \[\sinh\frac D2=\frac D2+O(D^3)\] and \[\arcsinh x=x+O(x^3),\] we get the desired asymptotic expansion.
\end{proof}

The constants $1/2$ and $1/\sqrt3$ are exactly the two-dimensional Euclidean circumradius constants for a disk/segment-type configuration and an equilateral triangle, as expected from the infinitesimal flatness of the hyperbolic metric.
\vskip.20in
\section{Extremal constructions and comparison with classical normalization}\label{sec:examples}

We next show that the bounds above are genuinely sharp within harmonic geometric function theory, not merely in abstract hyperbolic geometry.

\subsection{Realizing prescribed dilatation domains}

The following elementary realization device is useful.

\begin{lemma}
\label{lem:realization}
Let $\omega:\D\to\D$ be analytic and suppose $\norm{\omega}_\infty\le k<1$.  Define
\begin{equation}\label{eq:realization-f}
 g(z)=\int_0^z\omega(\zeta)\,d\zeta,
 \quad
 f(z)=z+\overline{g(z)}.
\end{equation}
Then $f$ is orientation-preserving, globally univalent, and $K$-quasiconformal with
\[
 K\le\frac{1+k}{1-k}.
\]
Its second dilatation is exactly $\omega$.
\end{lemma}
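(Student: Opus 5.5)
The plan is to verify the harmonic structure directly, then prove injectivity by a convexity argument on the disk. By construction $g$ is analytic in $\D$ with $g'=\omega$, so $f=h+\overline g$ with $h(z)=z$. Hence $h'\equiv1\neq0$, and
\[
J_f=|h'|^2-|g'|^2=1-|\omega|^2\ge1-k^2>0 .
\]
So $f$ is orientation-preserving, and its second dilatation is $\omega_f=g'/h'=\omega$ exactly. By \eqref{eq:pointwise-K} the pointwise eccentricity is bounded by $(1+k)/(1-k)$, which is the distortion bound once univalence is known.

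Global univalence is the only real step. I would use the convexity of $\D$ and a line-integral estimate. Take $z_1\neq z_2$ in $\D$ and parametrize the segment between them by $\gamma(t)=z_1+t(z_2-z_1)$, which stays in $\D$ by convexity. Then
\[
f(z_2)-f(z_1)=(z_2-z_1)+\overline{\int_{\gamma}\omega(\zeta)\,d\zeta}.
\]
The integral has modulus at most $\sup|\omega|\,|z_2-z_1|\le k|z_2-z_1|$. Therefore
\[
|f(z_2)-f(z_1)|\ge(1-k)|z_2-z_1|>0 .
\]
This gives injectivity, and in fact a bi-Lipschitz lower bound. The matching upper bound $|f(z_2)-f(z_1)|\le(1+k)|z_2-z_1|$ follows the same way.

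With this, $f$ is a homeomorphism onto its image $f(\D)$, which is open by invariance of domain. It is $C^\infty$ with positive Jacobian, and its maximal eccentricity is $\sup(1+|\omega|)/(1-|\omega|)\le(1+k)/(1-k)$. By the analytic definition of quasiconformality for smooth orientation-preserving diffeomorphisms, $f$ is $K$-quasiconformal with $K\le(1+k)/(1-k)$.

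I expect no serious obstacle. The only point needing care is that univalence uses convexity of the source domain $\D$, which justifies integrating along the straight segment, together with the strict inequality $k<1$.
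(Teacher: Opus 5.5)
Your proposal is correct and follows essentially the same route as the paper. You compute $J_f=1-|\omega|^2>0$ and $\omega_f=\omega$ from $h(z)=z$, use convexity of $\D$ to get $|g(z)-g(w)|\le k|z-w|$ and hence the co-Lipschitz bound $|f(z)-f(w)|\ge(1-k)|z-w|$ giving injectivity, and read off the distortion bound from \eqref{eq:pointwise-K}; your remarks on invariance of domain and the analytic definition of quasiconformality only spell out what the paper leaves implicit.
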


\begin{proof}
Note that $h(z)=z$ and $g'(z)=\omega(z)$. The second dilatation is $\omega$ and $J_f=1-|\omega|^2>0$.  Since the unit disk is convex, for $z,w\in\D$, the line segment joining them lies in $\D$, and
\[
 |g(z)-g(w)|
 \le k|z-w|.
\]
Therefore,
\[
 |f(z)-f(w)|
 \ge |z-w|-|g(z)-g(w)|
 \ge(1-k)|z-w|.
\]
Thus $f$ is injective and co-Lipschitz.  The quasiconformal bound follows from \eqref{eq:pointwise-K}.
\end{proof}

This lemma allows us to import sharp hyperbolic configurations into the harmonic category.

\subsection{Sharpness of the lower Jung bound}

Fix $0<k<1$ and let
\begin{equation}\label{eq:omega-kz}
 \omega(z)=kz.
\end{equation}
Then $E_f=\{\zeta:|\zeta|\le k\}$ for the realization in Lemma~\ref{lem:realization}.  This is a hyperbolic disk centered at $0$ of radius
\(
 R=2\arctanh k.
\)
Its hyperbolic diameter is $2R$.  Hence
\[
 \rA(f)=\frac{\dA(f)}2.
\]
This proves sharpness of the lower bound in \eqref{eq:affine-jung}.  Explicitly, the mapping
\(f_k(z)\) given by \eqref{63} is globally univalent and realizes equality.

More generally, $\omega(z)=kz^m$ has the same image $k\D$ for every positive integer $m$ and gives the same affine invariants.  Thus the lower extremal geometry is compatible with arbitrarily high vanishing order of the dilatation at the origin.

\subsection{Sharpness of the upper Jung bound}

Fix $D>0$.  Choose a regular equilateral hyperbolic triangle $\Delta_D$ of side length $D$ whose closed triangle is compactly contained in $\D$.  Its circumradius is
\begin{equation}\label{eq:equilateral-R}
 R=J(D)
 =\arcsinh\!\left(\frac{2}{\sqrt3}\sinh\frac D2\right).
\end{equation}
Indeed, if $c$ is the center and $m$ the midpoint of a side, the hyperbolic right triangle with vertices $c$, $m$, and a vertex of $\Delta_D$ has angle $\pi/3$ at $c$ and opposite side $D/2$, yielding
\[
 \sinh\frac D2=\frac{\sqrt 3}{2}\sinh R.
\]

Let $\Omega_D$ be the interior of $\Delta_D$.  By the Riemann mapping theorem, there is a conformal mapping
\[
 \omega_D:\D\longrightarrow\Omega_D.
\]
Because $\overline{\Omega_D}\Subset\D$, we have $\norm{\omega_D}_\infty<1$. By Lemma~\ref{lem:realization}, the resulting globally univalent harmonic mapping satisfies
\[
 E_f=\overline{\Omega_D},
 \quad
 \dA(f)=D,
 \quad
 \rA(f)=J(D).
\]
Hence the upper Jung bound is sharp in the harmonic univalent class.

\begin{remark}
The sharpness construction uses only two classical ingredients: the Riemann mapping theorem and the realization lemma.  It shows that no holomorphic constraint on the dilatation improves the universal hyperbolic Jung constant.  Although $\omega(\D)$ is open when $\omega$ is nonconstant, the interior of an equilateral hyperbolic  triangle is already an open extremal domain.
\end{remark}

We now proceed to prove the sharpness of Theorem~\ref{thm:intro-jung}.

\begin{proof}[The sharpness of Theorem~\ref{thm:intro-jung}]
Let $E_f=\overline{\omega_f(\D)}$, $R=\rA(f)$, and $D=\dA(f)$.  If $c$ is the circumcenter of $E_f$, then for any $\zeta,\eta\in E_f$, we have
\[
 d_{\D}(\zeta,\eta)\le d_{\D}(\zeta,c)+d_{\D}(c,\eta)\le 2R.
\]
Taking the supremum gives $D/2\le R$.  The upper estimate
\[
 R\le J(D)=\arcsinh\!\left(\frac{2}{\sqrt3}\sinh\frac D2\right)
\]
is exactly the two-dimensional curvature $-1$ case of Dekster's sharp hyperbolic Jung theorem~\cite{Dekster1995}; see also \cite[Theorem 3.3]{NasserRainioVuorinen2022}.

It remains to verify sharpness within the harmonic univalent category.  For the lower bound, given $0<k<1$, Take $\omega(z)=kz$ and realize it via Lemma~\ref{lem:realization}, yielding the mapping $f_k$ given by \eqref{63}.
Then $E_{f_k}=\{\lvert\zeta\rvert\le k\}$ is the hyperbolic disk centered at $0$ of radius $2\arctanh k$, so
\[
 \rA(f_k)=2\arctanh k,\quad \dA(f_k)=4\arctanh k=2\rA(f_k).
\]
Because $4\arctanh k$ ranges over $(0,\infty)$, equality in the lower bound occurs for every prescribed positive affine diameter.

For the upper bound, fix $D>0$ and let $\Omega_D$ be the interior of a regular equilateral hyperbolic triangle of side length $D$, chosen with compact closure in $\D$.  Its circumradius is $J(D)$.  Let $\omega_D:\D\to\Omega_D$ be a Riemann mapping and realize $\omega_D$ by Lemma~\ref{lem:realization}.  Since $\overline{\Omega_D}\Subset\D$, the resulting mapping is globally univalent harmonic quasiconformal, and
\[
 \dA(f)=D,\quad \rA(f)=J(D).
\]
Thus the upper bound is also attained for every $D>0$.  This proves both inequalities and both sharpness assertions.
\end{proof}

Two examples are given to illustrate Theorem~\ref{thm:intro-jung}.

\begin{example}
Choose an analytic mapping \(\omega\colon\mathbb D\to\mathbb D\) such that \(\omega(\mathbb D)\) is a hyperbolic line segment with endpoints \(\xi_1,\xi_2\). The hyperbolic diameter is \(d(f)=d_{\mathbb D}(\xi_1,\xi_2)\). The hyperbolic circumcenter \(c\) is the hyperbolic midpoint of \(\xi_1\) and \(\xi_2\), so that \(r(f)=d(f)/2\). Hence the lower bound in \eqref{eq:intro-jung} is attained. Our realization lemma produces a globally univalent harmonic quasiconformal mapping \[f(z)=z+\overline{\int_0^z\omega(\zeta)\,d\zeta}\] for this configuration.    
\end{example}

\begin{example}
Let \(\omega\) be a suitably scaled third-order Blaschke product such that the hyperbolic convex hull of \(\omega(\mathbb D)\) is an equilateral hyperbolic triangle with vertices \(\xi_1,\xi_2,\xi_3\). All three vertices lie on a common hyperbolic circle centered at \(c\), the hyperbolic circumcenter of the set. The pairwise hyperbolic distances between vertices are equal to \(d(f)\), and the circumradius satisfies \(r(f)=J(d(f))\). The outward unit tangent vectors at \(c\) toward the three vertices contain the origin in their convex hull in \(T_c\mathbb D\), consistent with the three-point support principle. The mapping constructed via the realization lemma is globally univalent and attains the upper bound in \eqref{eq:intro-jung}.   
\end{example}

\subsection{Shifted hyperbolic disks and the gain from balancing}

The affine radius can be much smaller than the ordinary distortion of a poorly chosen affine representative.  Fix $c\in\D$ and $R>0$ such that the closed hyperbolic ball
\(
 \overline B_{\D}(c,R)
\)
is compactly contained in $\D$.  Let $\omega$ mapping $\D$ conformally onto $B_{\D}(c,R)$.  Then
\[
 \rA(f)=R,
 \quad
 \KA(f)=e^R.
\]
On the other hand, the ordinary distortion of this representative is determined by the farthest point of the ball from $0$:
\begin{equation}\label{eq:shifted-K}
 \log K(f)=R+d_{\D}(0,c).
\end{equation}
Thus
\begin{equation}\label{eq:shifted-ratio}
 \frac{K(f)}{\KA(f)}=e^{d_{\D}(0,c)}.
\end{equation}
As $c$ approaches the ideal boundary, while $R$ remains fixed and the ball stays inside $\D$, this ratio can be arbitrarily large over different affine representatives of the same intrinsic range geometry.  This illustrates why a fixed class $K(f)\le K_0$ is not target-affine invariant.

\subsection{The classical normalization \texorpdfstring{$\omega(0)=0$}{omega(0)=0}}

The standard affine normalization in harmonic mapping theory often imposes $\omega(0)=0$.  In that gauge, the ordinary and affine distortions are quantitatively related.

\begin{theorem}
\label{thm:omega0-comparison}
Suppose $f$ is an orientation-preserving harmonic mapping with $\norm{\omega_f}_\infty<1$ and
\(
 \omega_f(0)=0.
\)
Then
\begin{equation}\label{eq:sqrtK}
 \sqrt{K(f)}\le\KA(f)\le K(f).
\end{equation}
Equivalently,
\begin{equation}\label{eq:r0-R}
 \frac12\log K(f)\le\rA(f)\le\log K(f).
\end{equation}
Both constants are best possible; the left-hand constant is sharp in the limiting sense and the right-hand equality occurs, for example, for $\omega(z)=kz$.
\end{theorem}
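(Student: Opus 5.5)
The proof reduces to comparing two ways of measuring the size of $E_f=\overline{\omega_f(\D)}$. By \eqref{eq:logK-hyp} with $A$ the identity, $\log K(f)=\sup_{\zeta\in E_f}d_{\D}(0,\zeta)=r_{E_f}(0)$. Hence the upper bound $\rA(f)\le\log K(f)$ is immediate from the definition $\rA(f)=\inf_{p}r_{E_f}(p)\le r_{E_f}(0)$; equivalently, it is the case $A=\mathrm{id}$ of Theorem~\ref{thm:optimal}. No use of $\omega_f(0)=0$ is needed here.

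For the lower bound I would use the normalization. Since $\omega_f(0)=0$, the origin lies in $E_f$. Let $c=\cA(f)$ and $R=\rA(f)$. Then $d_{\D}(c,0)\le R$ because $0\in E_f$, and for every $\zeta\in E_f$,
\[
 d_{\D}(0,\zeta)\le d_{\D}(0,c)+d_{\D}(c,\zeta)\le 2R.
\]
Taking the supremum gives $\log K(f)\le 2R$, which is $\sqrt{K(f)}\le\KA(f)$. This is the same argument as $\diam\le2\,\rad$ in Lemma~\ref{lem:rad-basic}(i), using $\log K(f)\le\dA(f)$ since $0\in E_f$.

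For sharpness, the right-hand equality holds for $\omega(z)=kz$ by the computation of Section~\ref{sec:examples}: $E_f=\{|\zeta|\le k\}$ is centered at $0$, so $\rA=2\arctanh k=\log K$. For the left-hand constant, I would take $E_f$ to be, in the limit, a hyperbolic segment with endpoint at $0$. Let $\omega$ map $\D$ conformally onto a thin neighborhood $U_\varepsilon$ of the real segment $[0,t]$, with $\omega(0)=0$ and $\overline{U_\varepsilon}\Subset\D$, and realize it by Lemma~\ref{lem:realization}. The circumcenter is then near the hyperbolic midpoint of $[0,t]$. Consequently $\rA\to\frac12 d_{\D}(0,t)$ while $\log K\to d_{\D}(0,t)$ as $\varepsilon\to0$, so the ratio $\rA/\log K\to\frac12$.

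The one point needing care is continuity of the circumradius under Hausdorff convergence of the compact sets $\overline{U_\varepsilon}\to[0,t]$. This follows from the fact that $r_E(p)$ is $1$-Lipschitz in $E$ with respect to the hyperbolic Hausdorff distance. Finally, I would remark that equality on the left is never attained exactly, since $\omega(\D)$ is open. This explains why the statement says the constant is sharp only in the limiting sense.
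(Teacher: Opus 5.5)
Your proof is correct, and the two inequalities are proved exactly as in the paper. The upper bound is $\rA(f)\le r_{E_f}(0)=\log K(f)$ by minimality of the circumradius. The lower bound is $\log K(f)\le 2\rA(f)$, obtained from the triangle inequality through $\cA(f)$ using $0=\omega_f(0)\in E_f$.

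The genuine difference is in how you show the factor $\frac12$ is sharp.

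\textbf{The paper's construction.} It fixes $R>0$ and takes conformal maps $\omega_j$ onto shifted hyperbolic balls $B_{\D}(c_j,R)$, with $\omega_j(0)=0$ and $d_{\D}(0,c_j)\to R^-$. Then $\rA(f_j)=R$ and $\log K(f_j)=R+d_{\D}(0,c_j)$ hold exactly. No continuity property of the circumradius is needed.

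\textbf{Your construction.} Your thin simply connected neighborhoods of the geodesic segment $[0,t]$ degenerate the set itself. So you need a mild continuity estimate for the radius, such as Lemma~\ref{lem:radius-Hausdorff}. In fact only a trivial one-sided bound is used. With $m$ the hyperbolic midpoint of $[0,t]$, you have $\rA(f_\varepsilon)\le r_{E_\varepsilon}(m)\le\frac12 d_{\D}(0,t)+o(1)$. Combined with the obvious $\log K(f_\varepsilon)\ge d_{\D}(0,t)$, this gives the ratio $\frac12$. Your claim that the circumcenter lies near the midpoint is not needed.

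In exchange, your family identifies the limiting extremal configuration. It is the diametral two-support segment with $0$ as a contact point, which is exactly what the equality analysis predicts.

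\textbf{Your remark on non-attainment.} It is right for nonconstant $\omega_f$. Openness of $\omega_f(\D)\subset\overline B_{\D}(c,R)$ puts $0$ in the open ball, so $d_{\D}(0,c)<R$. By compactness of $E_f$ this gives $\log K(f)<2\rA(f)$. You should make this step explicit and also exclude the trivial case $\omega_f\equiv0$. There $K(f)=\KA(f)=1$, so equality does hold.
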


\begin{proof}
Since $0=\omega_f(0)\in E_f$, if $c=\cA(f)$ and $R=\rA(f)$, then
\begin{equation}\label{eq:center-to-zero}
 d_{\D}(0,c)\le R.
\end{equation}
Put
\[
 r_0=\sup_{\zeta\in E_f}d_{\D}(0,\zeta)=\log K(f).
\]
Since $R$ is the minimum possible covering radius, $R\le r_0$.  Conversely, for every $\zeta\in E_f$,
by the triangle inequality and \eqref{eq:center-to-zero},
\[
 d_{\D}(0,\zeta)
 \le d_{\D}(0,c)+d_{\D}(c,\zeta)
 \le2R.
\]
Thus $r_0\le2R$.  Exponentiating gives \eqref{eq:sqrtK}.

For $\omega(z)=kz$, the image is a hyperbolic ball centered at $0$, so $R=r_0$ and $\KA=K$.  To see sharpness of the factor $1/2$, fix $R>0$ and choose hyperbolic balls $B_{\D}(c_j,R)$ with $0$ in their interior and
\[
 d_{\D}(0,c_j)\longrightarrow R^-\ \text{as }j\to\infty.
\]
Choose conformal mappings \[\omega_j:\D\to B_{\D}(c_j,R)\] normalized by $\omega_j(0)=0$, and realize them by Lemma~\ref{lem:realization}.  Then $\rA(f_j)=R$, whereas
\[
 \log K(f_j)=R+d_{\D}(0,c_j)\longrightarrow2R,\quad j\to\infty.
\]
\end{proof}

\begin{remark}
The inequality \eqref{eq:sqrtK} explains the precise limitation of the usual point normalization.  Eliminating $\omega(0)$ can lose at most a square in the maximal differential distortion relative to the globally optimal affine gauge; in the univalent category, this is a statement about quasiconformal constants, and the loss cannot be improved uniformly.
\end{remark}
\vskip.20in
\section{A canonical section of the affine quotient}\label{sec:canonical}

The minimizer in Theorem \ref{thm:optimal} is unique modulo similarities.  This makes it possible to remove the residual ambiguity by one standard normalization and obtain a canonical representative of each target-affine orbit.

\subsection{The balancing operator}

Let $f$ be an orientation-preserving harmonic mapping with $\norm{\omega_f}_\infty<1$ (so that $f_z(0)\ne0$).  Choose any minimizing affine mapping $A$ from Theorem \ref{thm:optimal}, and define
\begin{equation}\label{eq:Bal-def}
 \Bal[f](z)
 =\frac{A(f(z))-A(f(0))}{(A\circ f)_z(0)}.
\end{equation}
Then
\begin{equation}\label{eq:Bal-normalization}
 \Bal[f](0)=0,
 \quad
 \Bal[f]_z(0)=1,
\end{equation}
and $\Bal[f]$ is balanced.

\begin{theorem}
\label{thm:canonical-section}
The definition \eqref{eq:Bal-def} is independent of the chosen minimizing affine mapping $A$.  Moreover, for every orientation-preserving real-affine target mapping $B$,
\begin{equation}\label{eq:Bal-affine-invariant}
 \Bal[B\circ f]=\Bal[f].
\end{equation}
Thus $\Bal$ is a canonical section of the target-affine quotient, normalized by \eqref{eq:Bal-normalization}.
\end{theorem}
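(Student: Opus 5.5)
The argument rests on the uniqueness statement in Theorem~\ref{thm:optimal}: any two minimizing affine mappings differ by an orientation-preserving Euclidean similarity. We then check that the normalization in \eqref{eq:Bal-def} exactly kills a similarity.

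\emph{Step 1: independence of $A$.} Let $A_1,A_2$ be minimizing for $f$. By Theorem~\ref{thm:optimal}, $A_2=S\circ A_1$ with $S(w)=\alpha w+\gamma$ and $\alpha\ne0$. Then
\[
A_2(f(z))-A_2(f(0))=\alpha\bigl(A_1(f(z))-A_1(f(0))\bigr).
\]
Since $S$ is complex-linear up to translation, $(A_2\circ f)_z(0)=\alpha(A_1\circ f)_z(0)$. The factor $\alpha$ cancels in the quotient, so both choices give the same $\Bal[f]$.

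We also need the denominator to be nonzero. Writing $A\circ f=H+\overline G$ as in \eqref{eq:H-G-A}, we have $(A\circ f)_z=H'=ah'+bg'=h'(a+b\omega_f)$. This is nonzero because $h'\ne0$ and $|b\omega_f|<|a|$.

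Next we confirm that $\Bal[f]$ is balanced. Dividing by a nonzero constant and subtracting a constant is a similarity $S_0$, so $\Bal[f]=S_0\circ A\circ f$. The induced automorphism of $S_0$ is a rotation, so by Proposition~\ref{prop:full-invariance} the circumcenter of $\Bal[f]$ is $T_{S_0}(T_A(c))=T_{S_0}(0)=0$.

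\emph{Step 2: affine invariance.} Let $B$ be an orientation-preserving real-affine map. By Proposition~\ref{prop:center-equiv}, $\cA(B\circ f)=T_B(c)$ with $c=\cA(f)$. Take $A$ minimizing for $f$, and set $A'=A\circ B^{-1}$. Using the composition rule $T_{A\circ B^{-1}}=T_A\circ T_B^{-1}$ (already used in the proof of Theorem~\ref{thm:optimal}), we get
\[
T_{A'}(T_B(c))=T_A(c)=0,
\]
so $A'$ is minimizing for $B\circ f$. Moreover $A'\circ(B\circ f)=A\circ f$. The right-hand side of \eqref{eq:Bal-def} is therefore literally the same expression for $B\circ f$ with $A'$ as for $f$ with $A$. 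Together with Step~1, this gives \eqref{eq:Bal-affine-invariant}.

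\emph{Step 3: canonical section.} Invariance shows that $\Bal$ is constant on target-affine orbits. Since $\Bal[f]=S_0\circ A\circ f$ lies in the orbit of $f$, $\Bal$ picks exactly one normalized representative per orbit. Every balanced map with the normalization \eqref{eq:Bal-normalization} is fixed by $\Bal$: for such a map the identity is a minimizing affine mapping, and the normalization makes the quotient trivial.

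The main technical point is the composition rule for induced automorphisms, including the order convention. I would verify it directly from \eqref{eq:omega-transform}: since $\omega_{A\circ B\circ f}=T_A(\omega_{B\circ f})=T_A(T_B(\omega_f))$, we get $T_{A\circ B}=T_A\circ T_B$ on ranges, and hence identically. The rest of the argument is bookkeeping.
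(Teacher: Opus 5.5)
Your proof is correct and follows the paper's route. Step~1 is exactly the paper's cancellation of the similarity factor. Step~2 makes explicit, via $A'=A\circ B^{-1}$ (which is minimizing for $B\circ f$ and satisfies $A'\circ B\circ f=A\circ f$), the paper's brief remark that $f$ and $B\circ f$ share an affine orbit and hence the same balanced representatives modulo similarities. You also supply the nonvanishing of $(A\circ f)_z(0)$ and the balancedness of $\Bal[f]$, which the paper only asserts.

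One small tightening concerns the composition rule. Agreement of $T_{A\circ B}$ and $T_A\circ T_B$ on $\omega_f(\D)$ forces equality of the M\"obius maps only when $\omega_f$ is nonconstant. Since the rule depends only on $A$ and $B$, either test it on a map with open dilatation range, or multiply the coefficient matrices $\bigl(\begin{smallmatrix}\overline a&\overline b\\ b&a\end{smallmatrix}\bigr)$ directly.
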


\begin{proof}
Let $A_1$ and $A_2$ be two minimizing affine mappings.  By Theorem \ref{thm:optimal}, there is a similarity $S(w)=\lambda w+\gamma$ with $\lambda\ne0$, such that
\(
 A_2=S\circ A_1.
\)
Therefore,
\begin{align*}
 \frac{A_2(f(z))-A_2(f(0))}{(A_2\circ f)_z(0)}
 =\frac{\lambda(A_1(f(z))-A_1(f(0)))}
 {\lambda(A_1\circ f)_z(0)}
 =\frac{A_1(f(z))-A_1(f(0))}{(A_1\circ f)_z(0)}.
\end{align*}
This proves well-definedness. 

Now let $B$ be target-affine.  The affine orbit of $B\circ f$ is exactly the affine orbit of $f$, so the set of balanced representatives is the same modulo similarities.  Applying the normalization \eqref{eq:Bal-normalization} gives the same unique element.  Hence \eqref{eq:Bal-affine-invariant} holds.
\end{proof}

The operator also interacts naturally with disk automorphisms.  For $\phi\in\Aut(\D)$, write
\begin{equation}\label{eq:harmonic-Koebe-general}
 \mathcal K_\phi F(z)
 =\frac{F(\phi(z))-F(\phi(0))}
 {F_z(\phi(0))\phi'(0)}.
\end{equation}
This is normalized by value and analytic derivative at the origin.

\begin{proposition}
\label{prop:Bal-Koebe}
For every orientation-preserving harmonic mapping $f$ with $\norm{\omega_f}_\infty<1$ and every $\phi\in\Aut(\D)$,
\begin{equation}\label{eq:Bal-intertwine}
 \Bal[f\circ\phi]
 =\mathcal K_\phi(\Bal[f]).
\end{equation}
In particular, applying a Koebe transform to a balanced mapping and rebalancing produces exactly the normalized Koebe transform of its canonical affine class.
\end{proposition}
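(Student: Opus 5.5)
The plan is to check that $\mathcal K_\phi(\Bal[f])$ has the three properties that single out $\Bal[f\circ\phi]$: it lies in the target-affine orbit of $f\circ\phi$, it is balanced, and it satisfies the normalization \eqref{eq:Bal-normalization}. By Theorem~\ref{thm:canonical-section}, the canonical section is the \emph{unique} element of the affine orbit with these properties. Indeed, the balanced members of an orbit form one similarity class by Theorem~\ref{thm:optimal}. Within that class, a similarity $S(w)=\lambda w+\gamma$ satisfying $S(F(0))=0$ and $(S\circ F)_z(0)=1$ is determined once $F(0)$ and $F_z(0)\neq 0$ are known.

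Write $F=\Bal[f]=(A\circ f-A(f(0)))/(A\circ f)_z(0)$, where $A$ is a minimizing affine map for $f$. Then $F=A'\circ f$ for the affine map $A'=(A-A(f(0)))/(A\circ f)_z(0)$, and $F$ is balanced.

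\emph{Orbit.} We have $F\circ\phi=A'\circ(f\circ\phi)$. Also $\mathcal K_\phi F=S\circ F\circ\phi$, where $S(w)=(w-F(\phi(0)))/(F_z(\phi(0))\phi'(0))$ is a Euclidean similarity. Its denominator is nonzero because $F_z\ne0$ everywhere, since $J_F>0$, and $\phi'(0)\neq 0$. Hence $\mathcal K_\phi F=(S\circ A')\circ(f\circ\phi)$ lies in the affine orbit of $f\circ\phi$.

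\emph{Balance.} By \eqref{eq:precomp-omega}, $\omega_{F\circ\phi}=\omega_F\circ\phi$, so $E_{F\circ\phi}=E_F$ as a set. Postcomposition by the similarity $S$ rotates the dilatation range by the factor $\overline\mu/\mu$, where $\mu=1/(F_z(\phi(0))\phi'(0))$. Rotations about $0$ fix the hyperbolic circumcenter $0$, by Lemma~\ref{lem:rad-basic}(ii). Hence $\cA(\mathcal K_\phi F)=0$.

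\emph{Normalization.} By construction, $\mathcal K_\phi F(0)=0$. For the derivative, $F\circ\phi$ has analytic part $H\circ\phi$, where $F=H+\overline G$. Its $z$-derivative at $0$ is $H'(\phi(0))\phi'(0)=F_z(\phi(0))\phi'(0)$, so $(\mathcal K_\phi F)_z(0)=1$.

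Combining the three properties with the uniqueness from Theorem~\ref{thm:canonical-section} gives \eqref{eq:Bal-intertwine}. The last sentence of the proposition is a restatement of this identity for balanced $f$, where $\Bal[f]$ is just the normalized version of $f$.

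There is no real obstacle here. The only point needing care is to make the uniqueness statement explicit. Theorem~\ref{thm:canonical-section} is phrased as independence of the choice of $A$, so I would first note that any balanced, normalized element $\tilde F$ of the orbit of $f\circ\phi$ equals $\Bal[f\circ\phi]$. To see this, write $\tilde F=\tilde A\circ(f\circ\phi)$. Since $\tilde F$ is balanced, $\tilde A$ is minimizing by \eqref{eq:min-characterization}. Then \eqref{eq:Bal-def} computed with $\tilde A$ returns $\tilde F$ itself, because $\tilde F(0)=0$ and $\tilde F_z(0)=1$.
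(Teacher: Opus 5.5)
Your proof is correct and is essentially the paper's argument. The paper observes that the same balancing map $A$ works for $f\circ\phi$ because $\omega_{f\circ\phi}(\D)=\omega_f(\D)$, substitutes $A\circ f$ into \eqref{eq:Bal-def} normalized at $\phi(0)$, and lets the normalization absorb any similarity discrepancy. Your orbit, balance and normalization checks, followed by the explicit uniqueness of the balanced normalized representative, carry out the same computation and spell out the uniqueness step that the paper leaves implicit in its last sentence.
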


\begin{proof}
Let $A$ balance $f$.  Since $\omega_{f\circ\phi}(\D)=\omega_f(\D)$, the same affine mapping $A$ balances $f\circ\phi$.  Substituting $A\circ f$ into \eqref{eq:Bal-def} and normalizing at $\phi(0)$ gives \eqref{eq:Bal-intertwine}.  Any target similarity discrepancy disappears by the normalization, as in Theorem~\ref{thm:canonical-section}.
\end{proof}

\subsection{An affine-stable hierarchy}

For $D\ge0$, define
\begin{equation}\label{eq:HD-class}
 \mathscr H(D)
 =\left\{
 f:\D\to\C:
 f\ \text{is sense‑preserving and harmonic},
 \norm{\omega_f}_\infty<1, \dA(f)\le D
 \right\}.
\end{equation}
Let $\mathscr S(D)$ denote the univalent subfamily.  These are not normalized families; rather, they are naturally stable under the full affine group.

\begin{proposition}
\label{prop:HD-stable}
For every $D\ge0$, the classes $\mathscr H(D)$ and $\mathscr S(D)$ are invariant under:
\begin{enumerate}[label=\textup{(\roman*)}]
\item all orientation-preserving real-affine postcompositions;
\item all conformal automorphisms of the source disk;
\item all target similarities and the normalizations used in harmonic Koebe transforms.
\end{enumerate}
Furthermore,
\[
 \mathscr H(0)
 =\{\text{Target-affine images of locally univalent analytic mappings}\},
\]
and similarly in the univalent category.
\end{proposition}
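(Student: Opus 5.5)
The plan is to reduce every assertion to the behaviour of the dilatation range $E_f=\overline{\omega_f(\D)}$ and then quote Proposition~\ref{prop:full-invariance} and Proposition~\ref{prop:zero-defect}. Membership in $\mathscr H(D)$ involves four conditions:
\begin{enumerate}[label=\textup{(\alph*)}]
\item sense-preserving harmonicity;
\item $\norm{\omega_f}_\infty<1$;
\item $\dA(f)\le D$;
\item for $\mathscr S(D)$, global univalence.
\end{enumerate}
For each of the three operations I will check that all four conditions survive.

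\emph{Affine postcompositions.} Let $A$ be an orientation-preserving real-affine map. Then $A\circ f=H+\overline G$ with $H,G$ as in \eqref{eq:H-G-A}, so $A\circ f$ is harmonic. By \eqref{eq:omega-transform} its dilatation is $T_A\circ\omega_f$, which maps into $\D$, so $A\circ f$ is sense-preserving. Since $T_A(\overline{\D}_k)$ is a compact subset of $\D$ whenever $k<1$, we get $\norm{\omega_{A\circ f}}_\infty<1$; alternatively one can use Proposition~\ref{prop:bounded-iff-qc} together with $\dA(A\circ f)=\dA(f)<\infty$. The equality $\dA(A\circ f)=\dA(f)$ is Proposition~\ref{prop:full-invariance}(i). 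Univalence is preserved because $A$ is a bijection of $\C$.

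\emph{Source automorphisms and Koebe normalizations.} For $\phi\in\Aut(\D)$, the map $f\circ\phi$ is harmonic and $\omega_{f\circ\phi}=\omega_f\circ\phi$ by \eqref{eq:precomp-omega}. Hence the image set, the sup norm, and $\dA$ are all unchanged, and univalence is clearly preserved. A target similarity is a special affine map, so it is covered by the first case. The Koebe normalization $\mathcal K_\phi$ of \eqref{eq:harmonic-Koebe-general} is a composition of precomposition by $\phi$ with a target similarity; the similarity is well defined because $F_z(\phi(0))=h'(\phi(0))\neq0$. So (iii) follows from (i) and (ii).

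\emph{The description of $\mathscr H(0)$.} Since $\dA(f)=0$, Proposition~\ref{prop:zero-defect} shows that $\omega_f$ is constant and that $A\circ f$ is analytic for some $A$. Analytic implies $A\circ f$ is locally univalent, since $J=|H'|^2>0$. Thus $f=A^{-1}\circ(A\circ f)$ is a target-affine image of a locally univalent analytic map. Conversely, take $F$ analytic with $F'\ne0$ and any affine $A$. Then $\omega_F\equiv0$, so $\omega_{A\circ F}$ is the constant $T_A(0)$, which has modulus less than $1$. Hence $\dA(A\circ F)=0$ and $A\circ F\in\mathscr H(0)$. In the univalent category the same argument works with "locally univalent" replaced by "univalent", because $A$ is bijective.

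There is no real obstacle here. The only point needing care is that the sup-norm condition in \eqref{eq:HD-class} is preserved under affine postcomposition, even though $\norm{\omega}_\infty$ itself is not invariant. This is handled by the equivalence of Proposition~\ref{prop:bounded-iff-qc}.
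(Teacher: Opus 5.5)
Your proposal is correct and takes essentially the same route as the paper: it cites Proposition~\ref{prop:full-invariance} for the invariance statements and Proposition~\ref{prop:zero-defect} for the description of $\mathscr H(0)$. You also spell out the routine checks the paper leaves implicit, namely preservation of sense-preservation, of $\norm{\omega_f}_\infty<1$ (via Proposition~\ref{prop:bounded-iff-qc}), and of univalence, plus the converse inclusion for the zero level.
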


\begin{proof}
The invariance follows from Proposition \ref{prop:full-invariance}, and the zero-level characterization follows from Proposition~\ref{prop:zero-defect}.
\end{proof}

The hierarchy $\mathscr H(D)$ repairs the incompatibility between fixed-$K$ classes and affine invariance.  Although an arbitrary representative in $\mathscr H(D)$ can have large maximal differential distortion, its canonical balanced representative has a uniform bound depending only on $D$. For members of the univalent subfamily, this is a uniform quasiconformal bound.

\begin{corollary}
\label{cor:uniform-balanced}
If $f\in\mathscr H(D)$, then
\begin{equation}\label{eq:uniform-balanced-K}
 K(\Bal[f])=\KA(f)
 \le \exp (J(D)).
\end{equation}
Equivalently,
\begin{equation}\label{eq:uniform-balanced-k}
 \norm{\omega_{\Bal[f]}}_\infty
 \le \tanh\frac{J(D)}2.
\end{equation}
The bound is sharp.
\end{corollary}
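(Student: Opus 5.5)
The plan is to reduce the corollary to three earlier results: the balanced identity of Corollary~\ref{cor:balanced-k}, the affine-Jung inequality of Theorem~\ref{thm:affine-jung}, and the sharpness construction of Section~\ref{sec:examples}.

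\emph{Step 1: the equality.} Write $\Bal[f]=(A\circ f - A(f(0)))/(A\circ f)_z(0)$ with $A$ minimizing. Then $\Bal[f]=S\circ A\circ f$ for the similarity $S(w)=(w-A(f(0)))/\lambda$, where $\lambda=(A\circ f)_z(0)\neq0$. By Proposition~\ref{prop:full-invariance}, $\cA(\Bal[f])=T_S(T_A(\cA(f)))$. We have $T_A(\cA(f))=0$ by Theorem~\ref{thm:optimal}, and $T_S$ is a rotation, so $\Bal[f]$ is balanced. Corollary~\ref{cor:balanced-k} then gives
\[
K(\Bal[f])=\KA(\Bal[f])=\KA(f),
\]
using affine invariance of $\KA$ once more.

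\emph{Step 2: the bound.} Theorem~\ref{thm:affine-jung} gives $\rA(f)\le J(\dA(f))$. Since $J$ is increasing (a composition of increasing functions) and $\dA(f)\le D$, we get $\rA(f)\le J(D)$. Exponentiating gives \eqref{eq:uniform-balanced-K}. For \eqref{eq:uniform-balanced-k}, Corollary~\ref{cor:balanced-k} gives $\norm{\omega_{\Bal[f]}}_\infty=\tanh(\rA(f)/2)$, and $\tanh$ is increasing.

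\emph{Step 3: sharpness.}
\begin{itemize}
\item For $D>0$, take the equilateral-triangle mapping of Section~\ref{sec:examples}, built from a Riemann map onto $\Omega_D$ and Lemma~\ref{lem:realization}. It has $\dA(f)=D$ and $\rA(f)=J(D)$, so $f\in\mathscr H(D)$ and equality holds.
\item For $D=0$, any analytic locally univalent map gives equality, since both sides equal $1$.
\end{itemize}

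The only point needing care is Step 1: one must confirm that the normalization by $(A\circ f)_z(0)$ is a genuine similarity, so that balancedness and $K$ are preserved. This holds because similarities act on the dilatation by rotations, which fix $0$ and preserve $\norm{\cdot}_\infty$. The remaining steps are direct invocations of the cited results.
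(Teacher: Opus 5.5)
Your proposal is correct and follows the paper's own proof, which combines Theorem~\ref{thm:affine-jung} with Corollary~\ref{cor:balanced-k} and takes sharpness from the equilateral-triangle construction of Section~\ref{sec:examples}. Your added checks make explicit what the paper leaves implicit: that the normalizing similarity keeps $\Bal[f]$ balanced, that $J$ is monotone so $\dA(f)\le D$ suffices, and that the case $D=0$ is trivial.
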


\begin{proof}
This is Theorem~\ref{thm:affine-jung} together with Corollary~\ref{cor:balanced-k}.  Sharpness follows from the equilateral-triangle construction in Section~\ref{sec:examples}.
\end{proof}

\begin{remark}
The parameter $D$ measures the oscillation of the complex dilatation rather than its displacement from a chosen origin.  It therefore behaves like a gauge-invariant curvature of the affine orbit: $D=0$ is exactly the affine-analytic locus, while finite $D$ gives a quantitatively controlled departure from that locus.
\end{remark}
\vskip.20in
\section{Stability and normality}\label{sec:stability}

The hyperbolic circumcenter is stable under uniform perturbations of the dilatation.  We first state a metric lemma.

\subsection{Hausdorff stability of circumcenters}

For bounded compact subsets $E,F\subset\D$, let $d_H(E,F)$ denote their Hausdorff distance with respect to $d_{\D}$. We recall the following elementary fact from metric geometry, which will be applied to the image of the complex dilatation.

\begin{lemma}
\label{lem:radius-Hausdorff}
If $d_H(E,F)\le\varepsilon$, then
\begin{equation}\label{eq:radius-lipschitz}
 |\rad_{\D}(E)-\rad_{\D}(F)|\le\varepsilon.
\end{equation}
\end{lemma}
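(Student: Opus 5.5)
The plan is to compare covering functions pointwise in the center variable and then take infima. For a point $p\in\D$, write $r_E(p)=\sup_{\zeta\in E}d_{\D}(p,\zeta)$ as in \eqref{eq:rEc}. Since $\rad_{\D}(E)=\inf_{p\in\D}r_E(p)$ by \eqref{eq:radE}, it suffices to show
\[
 |r_E(p)-r_F(p)|\le\varepsilon\quad(p\in\D).
\]
Passing to the infimum over $p$ then preserves this bound. Indeed, $r_E\le r_F+\varepsilon$ pointwise gives $\rad_{\D}(E)\le\rad_{\D}(F)+\varepsilon$, and the symmetric inequality follows in the same way.

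To get the pointwise bound, fix $p$ and take $\zeta\in E$. The hypothesis $d_H(E,F)\le\varepsilon$ means every point of $E$ is within $\varepsilon$ of $F$. One small point needs attention here: the Hausdorff distance is an infimum, so the nearby point in $F$ may only be within $\varepsilon+\delta$. Because $F$ is compact, the distance $\inf_{\eta\in F}d_{\D}(\zeta,\eta)$ is attained. We may therefore pick $\eta\in F$ with $d_{\D}(\zeta,\eta)\le\varepsilon$ exactly. (Alternatively, we could argue with $\varepsilon+\delta$ and let $\delta\to0$.)

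The triangle inequality then gives
\[
 d_{\D}(p,\zeta)\le d_{\D}(p,\eta)+\varepsilon\le r_F(p)+\varepsilon.
\]
Taking the supremum over $\zeta\in E$ yields $r_E(p)\le r_F(p)+\varepsilon$, and exchanging the roles of $E$ and $F$ gives the reverse inequality. Every quantity involved is finite, because $E$ and $F$ are compact subsets of $\D$ and hence hyperbolically bounded.

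There is no real obstacle in this argument. The only step needing care is the attainment of the nearby point, which compactness resolves. The proof uses nothing specific to hyperbolic geometry, only the triangle inequality for $d_{\D}$, so the lemma is the standard $1$-Lipschitz property of the circumradius with respect to the Hausdorff metric in any metric space.
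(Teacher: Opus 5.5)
Your proposal is correct and follows essentially the same route as the paper: the pointwise bound $r_E(p)\le r_F(p)+\varepsilon$ from the triangle inequality, then infima over $p$, then symmetry. The only difference is that you justify the existence of a point $\eta\in F$ with $d_{\D}(\zeta,\eta)\le\varepsilon$ via compactness (or a $\delta\to0$ argument), a detail the paper's proof leaves implicit.
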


\begin{proof}
For every $p\in\D$ and every $x\in E$, choose $y\in F$ with $d(x,y)\le\varepsilon$.  Then
\[
 d(p,x)\le d(p,y)+\varepsilon
 \le r_F(p)+\varepsilon.
\]
Hence $r_E(p)\le r_F(p)+\varepsilon$.  Taking infima in $p$ gives
\[
 \rad(E)\le\rad(F)+\varepsilon.
\]
Interchanging $E$ and $F$ proves the claim.
\end{proof}

We also need a quantitative center estimate.  The CAT$(0)$ semi-parallelogram inequality implies a strong convexity estimate for radius functions.

\begin{lemma}
\label{lem:center-Hausdorff}
Let $c_E,c_F$ and $R_E,R_F$ be the circumcenters and circumradii of compact hyperbolically bounded sets $E,F\subset\D$.  If $d_H(E,F)\le\varepsilon$, then
\begin{equation}\label{eq:center-stability}
 d_{\D}(c_E,c_F)^2
 \le 8R_E\,\varepsilon+8\varepsilon^2.
\end{equation}
The same estimate holds with $R_E$ replaced by $R_F$.  In particular, if $E_n\to E$ in hyperbolic Hausdorff distance, then $c_{E_n}\to c_E$ and $R_{E_n}\to R_E$.
\end{lemma}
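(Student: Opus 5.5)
The proof rests on the strong convexity of the radius function $r_E(p)=\sup_{\zeta\in E}d_{\D}(p,\zeta)$, which comes from the CAT$(0)$ semi-parallelogram (Bruhat--Tits) inequality. If $m$ is the geodesic midpoint of $p,q\in\D$, then for every $\zeta\in\D$
\[
 d(m,\zeta)^2\le \tfrac12 d(p,\zeta)^2+\tfrac12 d(q,\zeta)^2-\tfrac14 d(p,q)^2 .
\]
Taking the supremum over $\zeta\in E$ gives
\[
 r_E(m)^2\le \tfrac12 r_E(p)^2+\tfrac12 r_E(q)^2-\tfrac14 d(p,q)^2 .
\]
Apply this with $p=c_E$ and $q=c_F$. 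Since $r_E(m)\ge R_E$ by minimality, we obtain
\[
 \tfrac14 d(c_E,c_F)^2\le \tfrac12\bigl(r_E(c_F)^2-R_E^2\bigr).
\]
The problem thus reduces to bounding the excess $r_E(c_F)-R_E$.

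As in the proof of Lemma~\ref{lem:radius-Hausdorff}, the Hausdorff hypothesis gives $r_E(p)\le r_F(p)+\varepsilon$ for every $p$. Hence
\[
 r_E(c_F)\le R_F+\varepsilon\le R_E+2\varepsilon,
\]
where the second step uses Lemma~\ref{lem:radius-Hausdorff}. Therefore
\[
 r_E(c_F)^2-R_E^2\le (R_E+2\varepsilon)^2-R_E^2=4R_E\varepsilon+4\varepsilon^2 .
\]
Inserting this into the inequality above yields
\[
 d(c_E,c_F)^2\le 2\bigl(4R_E\varepsilon+4\varepsilon^2\bigr)=8R_E\varepsilon+8\varepsilon^2,
\]
which is \eqref{eq:center-stability}. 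Exchanging the roles of $E$ and $F$ gives the same estimate with $R_F$ in place of $R_E$.

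The only point needing care is the semi-parallelogram inequality in $\D$ with the curvature $-1$ metric. It holds in every CAT$(0)$ space, and the paper has already recorded that the hyperbolic disk is CAT$(-1)$, hence CAT$(0)$. It is also worth checking that the supremum step is legitimate: the inequality holds pointwise for each $\zeta$ with the same $m,p,q$, and the right-hand side is bounded by its value with $r_E(p)$ and $r_E(q)$ in place of $d(p,\zeta)$ and $d(q,\zeta)$. So no compactness subtlety arises.

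For the final assertion, suppose $E_n\to E$ in hyperbolic Hausdorff distance, with $\varepsilon_n=d_H(E_n,E)\to0$. Lemma~\ref{lem:radius-Hausdorff} gives $R_{E_n}\to R_E$. Applying \eqref{eq:center-stability} with the fixed radius $R_E$ gives
\[
 d(c_{E_n},c_E)^2\le 8R_E\varepsilon_n+8\varepsilon_n^2\to0,
\]
so $c_{E_n}\to c_E$.
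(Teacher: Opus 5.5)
Your proof is correct and is essentially the paper's argument: you apply the semi-parallelogram inequality at the midpoint of $c_E$ and $c_F$, use minimality of $R_E$ to get $d_{\D}(c_E,c_F)^2\le 2\bigl(r_E(c_F)^2-R_E^2\bigr)$, and then use the Hausdorff bound $r_E(c_F)\le R_F+\varepsilon\le R_E+2\varepsilon$ via Lemma~\ref{lem:radius-Hausdorff}. The only cosmetic difference is that the paper first records this strong-convexity estimate for an arbitrary point $x$ and then sets $x=c_F$.
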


\begin{proof}
In a CAT$(0)$ space, if $m$ is the midpoint of $c_E$ and an arbitrary point $x$, then for every $y\in E$,
\[
 d(m,y)^2
 \le\frac12d(c_E,y)^2+\frac12d(x,y)^2
 -\frac14d(c_E,x)^2.
\]
Taking the supremum over $y\in E$ gives
\[
 r_E(m)^2
 \le\frac12R_E^2+\frac12r_E(x)^2
 -\frac14d(c_E,x)^2.
\]
Since $c_E$ minimizes the radius, $R_E\le r_E(m)$, and therefore
\begin{equation}\label{eq:strong-radius}
 d(c_E,x)^2
 \le2\bigl(r_E(x)^2-R_E^2\bigr).
\end{equation}
Take $x=c_F$.  Hausdorff closeness gives
\[
 r_E(c_F)\le R_F+\varepsilon
 \le R_E+2\varepsilon
\]
by Lemma~\ref{lem:radius-Hausdorff}.  Substituting in \eqref{eq:strong-radius}, we have
\begin{align*}
 d(c_E,c_F)^2
 \le2\bigl[(R_E+2\varepsilon)^2-R_E^2\bigr]
 =8R_E\,\varepsilon+8\varepsilon^2.
\end{align*}
The symmetric estimate follows by interchanging $E$ and $F$.  The convergence statement is immediate.
\end{proof}

\subsection{Stability under \texorpdfstring{$H^\infty$}{H-infinity} perturbation}
We investigate the continuity of target-affine invariant quantities under \(H^\infty\) perturbations of the complex dilatation, with \(\|\omega\|_\infty\) uniformly bounded away from \(1\). The following proposition summarizes the stability results.

\begin{proposition}
\label{prop:Hinf-stability}
Let $f_n=h_n+\overline{g_n}$ and $f=h+\overline g$ be orientation-preserving harmonic mappings such that, for some $0\le k<1$,
\[
 \norm{\omega_{f_n}}_\infty\le k,
 \quad
 \norm{\omega_f}_\infty\le k,
\]
and
\begin{equation}\label{eq:Hinf-omega}
 \norm{\omega_{f_n}-\omega_f}_\infty\longrightarrow0.
\end{equation}
Then
\begin{align}
 \rA(f_n)&\to\rA(f),\label{eq:rA-cont}\\
 \dA(f_n)&\to\dA(f),\label{eq:dA-cont}\\
 \cA(f_n)&\to\cA(f)\ \text{in }d_{\D},\label{eq:cA-cont}\\
 \KA(f_n)&\to\KA(f).
\end{align}
\end{proposition}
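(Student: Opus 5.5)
The plan is to reduce everything to Hausdorff convergence of the closed dilatation ranges $E_n=E_{f_n}=\overline{\omega_{f_n}(\D)}$ to $E=E_f=\overline{\omega_f(\D)}$ in the hyperbolic metric. After that, Lemma~\ref{lem:radius-Hausdorff} and Lemma~\ref{lem:center-Hausdorff} give the conclusions directly.

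\emph{Step 1: comparing the two metrics on a compact disk.} All the sets $E_n$ and $E$ lie in the closed Euclidean disk $\overline{D}_k=\{|\zeta|\le k\}$, which is compact in $\D$. On $\overline{D}_k$ the hyperbolic and Euclidean metrics are bi-Lipschitz equivalent. Concretely, the density $2/(1-|z|^2)$ is at most $2/(1-k^2)$ there, and the Euclidean segment between two points of $\overline{D}_k$ stays in $\overline{D}_k$. Integrating the metric along that segment gives
\[
 d_{\D}(\zeta,\eta)\le \frac{2}{1-k^2}\,|\zeta-\eta|\quad(\zeta,\eta\in\overline{D}_k).
\]

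\emph{Step 2: Hausdorff convergence of the ranges.} Put $\varepsilon_n=\norm{\omega_{f_n}-\omega_f}_\infty$. Every point $\omega_{f_n}(z)$ has the companion $\omega_f(z)$ at Euclidean distance at most $\varepsilon_n$, and conversely. Passing to closures, each point of $E_n$ is within Euclidean distance $\varepsilon_n$ of $E$, and each point of $E$ is within $\varepsilon_n$ of $E_n$. This uses only a limiting argument and the compactness of the sets. Step 1 then gives
\[
 d_H(E_n,E)\le \frac{2\varepsilon_n}{1-k^2}\to0 .
\]

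\emph{Step 3: conclusions.} Lemma~\ref{lem:radius-Hausdorff} gives $|\rA(f_n)-\rA(f)|\le d_H(E_n,E)\to0$. Continuity of the exponential then gives $\KA(f_n)=e^{\rA(f_n)}\to\KA(f)$.

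For the centers, apply Lemma~\ref{lem:center-Hausdorff} to get
\[
 d_{\D}(\cA(f_n),\cA(f))^2\le 8\rA(f)\,d_H+8d_H^2\to0 .
\]

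For the diameters we need the analogous Lipschitz estimate $|\diam(E_n)-\diam(E)|\le 2d_H(E_n,E)$. This is proved exactly as Lemma~\ref{lem:radius-Hausdorff}: given $x,x'\in E_n$, choose $y,y'\in E$ within distance $d_H$ of them, and use $d(x,x')\le d(y,y')+2d_H$. Then interchange the roles of $E_n$ and $E$.

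Step 2 is the only point needing any care, and it is mild. The one subtlety is that the uniform bound $k$ is essential: without it the bi-Lipschitz constant degenerates near $\partial\D$, and Euclidean closeness would not imply hyperbolic closeness.
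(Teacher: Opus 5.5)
Your proof is correct and follows the paper's route: the bi-Lipschitz comparison of the hyperbolic and Euclidean metrics on $\{|\zeta|\le k\}$ gives hyperbolic Hausdorff convergence $E_{f_n}\to E_f$, and then Lemmas~\ref{lem:radius-Hausdorff} and \ref{lem:center-Hausdorff}, the $2$-Lipschitz dependence of the diameter, and exponentiation for $\KA$ finish the argument; you also supply the explicit constant $2/(1-k^2)$ and the closure step that the paper leaves implicit. One small quibble with your closing remark: the uniform bound on the $f_n$ is automatic rather than essential, because $\norm{\omega_f}_\infty<1$ and uniform convergence already give $\norm{\omega_{f_n}}_\infty\le\tfrac{1+\norm{\omega_f}_\infty}{2}$ for all large $n$.
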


\begin{proof}
On the compact Euclidean disk $|z|\le k$, the hyperbolic and Euclidean metrics are bi-Lipschitz equivalent.  Thus \eqref{eq:Hinf-omega} implies that the compact sets
\(
 E_{f_n}=\overline{\omega_{f_n}(\D)}
\)
converge to $E_f$ in hyperbolic Hausdorff distance.  The conclusions for radius and center follow from Lemmas~\ref{lem:radius-Hausdorff} and \ref{lem:center-Hausdorff}.  Diameter is Lipschitz with respect to Hausdorff distance with constant $2$, and the conclusion for $\KA$ follows by exponentiation.
\end{proof}

\subsection{Normality of balanced normalized families}

Although balancing is global and need not be closed under mere locally uniform convergence of dilatations, it yields uniform quasiconformal bounds on the canonical representatives.  This gives a useful normality statement.

\begin{theorem}
\label{thm:normality}
Fix $D<\infty$.  Let $f_n\in\mathscr S(D)$ and set
\(
 F_n=\Bal[f_n].
\)
Then $\{F_n\}$ is a normal family in the topology of locally uniform convergence.  More precisely, with
\[
 k_D=\tanh\frac{J(D)}2<1,
\]
every $F_n=H_n+\overline{G_n}$ satisfies
\begin{equation}\label{eq:normal-kD}
 \norm{G_n'/H_n'}_\infty\le k_D,
\end{equation}
and every sequence admits a locally uniformly convergent subsequence whose limit is a normalized orientation-preserving univalent harmonic mapping with quasiconformal constant at most $(1+k_D)/(1-k_D)$.
\end{theorem}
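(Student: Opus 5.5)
First I will check that the $F_n$ belong to a compact normalized class, and then I will invoke the classical compactness theory for normalized univalent harmonic mappings with uniformly bounded dilatation.

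\emph{Step 1 (uniform dilatation bound).} Each $f_n\in\mathscr S(D)$ is univalent and has $\dA(f_n)\le D$. The canonical representative $F_n=\Bal[f_n]$ is obtained from $f_n$ by postcomposing with an affine map and then with a complex-linear normalization. So $F_n$ is again univalent, sense-preserving and balanced, and it satisfies $F_n(0)=0$ and $(H_n)'(0)=1$. Corollary~\ref{cor:uniform-balanced} then gives $\norm{G_n'/H_n'}_\infty\le\tanh(J(D)/2)=k_D$, which is \eqref{eq:normal-kD}. In particular $|G_n'(0)|\le k_D$, so $F_n\in\SH$ with $|\omega_{F_n}|\le k_D$ throughout. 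This step is routine.

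\emph{Step 2 (normality).} Here I would use the compactness results for normalized univalent harmonic mappings (Clunie--Sheil-Small, Duren). The first option is to pass to $F_n^0=(F_n-\overline{b_n F_n})/(1-|b_n|^2)$ with $b_n=\overline{G_n'(0)}$, where $|b_n|\le k_D<1$. These maps lie in $\SHo$, which is a normal family because of the coefficient growth bounds and the distortion theorem. Since $|b_n|\le k_D$ is bounded away from $1$, $F_n$ is a uniformly bounded affine image of $F_n^0$, so $\{F_n\}$ is locally uniformly bounded and hence normal. A more elementary alternative avoids the $\SHo$ theory: $H_n$ is univalent, because the harmonic map is $K$-qc with $k<1$ and the standard argument (Lemma-type estimate) shows its analytic part is close-to-univalent. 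If that is unclear, I would go through the affine map $F_n^0$ as above.

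\emph{Step 3 (the limit).} I pass to a subsequence with $F_n\to F$ locally uniformly. By harmonicity, derivatives also converge, so $H_n'\to H'$ and $G_n'\to G'$. The normalization gives $H'(0)=1$, hence $H'\not\equiv0$, and Hurwitz-type arguments applied to $\omega_n=G_n'/H_n'$ (bounded by $k_D$) yield $|G'|\le k_D|H'|$. A more careful step is needed to exclude zeros of $H'$. For this I would use that the $F_n$ are uniformly $K$-qc with $K=(1+k_D)/(1-k_D)$ and normalized. The limit of normalized $K$-quasiconformal maps is then either constant or $K$-quasiconformal (Lehto--Virtanen), and it cannot be constant because $(F)_z(0)=1$. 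Thus $F$ is a $K$-qc homeomorphism, so $J_F>0$ by Lewy's theorem, and therefore $H'\ne0$, $\omega_F=\lim\omega_n$ and $\norm{\omega_F}_\infty\le k_D$.

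The main obstacle is the non-degeneracy of the limit. Normalization alone does not prevent collapse, since a locally uniform limit of univalent harmonic maps can in principle be non-univalent. The uniform quasiconformality supplied by Step 1 is exactly what rules this out. I note that the limit need not be balanced, and the theorem does not claim that it is.
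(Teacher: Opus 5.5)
Your argument is correct up to a small slip noted below. It agrees with the paper in obtaining \eqref{eq:normal-kD} from Corollary~\ref{cor:uniform-balanced} and in extracting a convergent subsequence through the $\SHo$-normalization. It differs in how you identify the limit.

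The paper uses the full compactness of $\SHo$, including closedness. The limit $\widetilde F$ lies in $\SHo$, and $b_n\to b$ with $|b|\le k_D<1$. Then $F=\widetilde F+\overline b\,\overline{\widetilde F}$ is an orientation-preserving affine image of $\widetilde F$, so univalence, sense-preservation and the normalization are inherited at once. The bound $k_D$ is needed only to keep $|b|<1$ and to pass $|G_n'|\le k_D|H_n'|$ to the limit.

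You use only the local boundedness of $\SHo$ and get non-degeneracy from uniform quasiconformality:
\begin{itemize}
\item by the Lehto--Virtanen limit theorem, $F$ is a homeomorphism or a constant;
\item $H'(0)=1$, via convergence of derivatives, excludes the constant;
\item Lewy's theorem gives $J_F\neq0$, and its sign (hence $H'\neq0$) comes from $|G'|\le k_D|H'|$.
\end{itemize}
Your route replaces the harmonic Hurwitz-type theorem behind the closedness of $\SHo$ with standard quasiconformal compactness. It also makes explicit that the uniform dilatation bound is what prevents collapse. The paper's route stays entirely inside harmonic mapping theory and is shorter, given the cited compactness theorem.

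The slip: with $b_n=\overline{G_n'(0)}$, your map $(F_n-\overline{b_nF_n})/(1-|b_n|^2)$ has co-analytic derivative $G_n'(0)-\overline{G_n'(0)}$ at $0$. So it lies in $\SHo$ only when $G_n'(0)$ is real. Use $(F_n-b_n\overline{F_n})/(1-|b_n|^2)$ instead, which is the paper's \eqref{eq:standard-affine-normalization}.

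Two further points:
\begin{itemize}
\item Drop the aside that $H_n$ is univalent; it is unjustified and you never use it.
\item No Hurwitz argument is needed for $|G'|\le k_D|H'|$; it is just the pointwise limit of $|G_n'|\le k_D|H_n'|$.
\end{itemize}
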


\begin{proof}
The bound \eqref{eq:normal-kD} is Corollary~\ref{cor:uniform-balanced}.  Write
\(
 b_n=G_n'(0),
\ |b_n|\le k_D.
\)
Consider the standard affine normalization
\begin{equation}\label{eq:standard-affine-normalization}
 \widetilde F_n
 =\frac{F_n-\overline{b_n}\,\overline{F_n}}
 {1-|b_n|^2}.
\end{equation}
Then $\widetilde F_n(0)=0$, $(\widetilde F_n)_z(0)=1$, and $(\widetilde F_n)_{\overline z}(0)=0$.  Since affine postcomposition preserves univalence and orientation-preservation, i.e., $\widetilde F_n\in\SHo$.

The standard family $\SHo$ is compact in the topology of locally uniform convergence; see Clunie-Sheil-Small~\cite{ClunieSheilSmall1984} or Duren~\cite{Duren2004}.  Passing to a subsequence, assume
\[
 \widetilde F_n\longrightarrow\widetilde F
\]
locally uniformly and $b_n\to b$ with $|b|\le k_D$.  Solving \eqref{eq:standard-affine-normalization} for $F_n$ gives
\begin{equation}\label{eq:recover-F}
 F_n=\widetilde F_n+\overline{b_n}\,\overline{\widetilde F_n}.
\end{equation}
Hence
\[
 F_n\longrightarrow F
 =\widetilde F+\overline b\,\overline{\widetilde F}
\]
locally uniformly.  By the compactness theorem for $\SHo$, the limit $\widetilde F$ belongs to $\SHo$.  Since $|b|\le k_D<1$, the real-affine mapping $L_b(w)=w+\overline b\,\overline w$ is orientation-preserving, and $F=L_b\circ\widetilde F$ is therefore normalized, orientation-preserving, and univalent.  Local uniform convergence of harmonic mappings implies local uniform convergence of their first derivatives; hence the pointwise bound $|G_n'|\le k_D|H_n'|$ passes to the limit and gives $|G'|\le k_D|H'|$ on $\D$.  Thus $F$ is a quasiconformal homeomorphism onto its image with constant at most $(1+k_D)/(1-k_D)$.
\end{proof}

\begin{remark}
The limit in Theorem~\ref{thm:normality} need not remain balanced without stronger, global convergence of the dilatations.  Boundary-scale oscillation can disappear under locally uniform convergence.  This is why Proposition~\ref{prop:Hinf-stability} is formulated in the $H^\infty$ topology.  The normality theorem should therefore be read as precompactness of the canonical affine quotient section, not as closedness of the balanced locus in the compact-open topology.
\end{remark}
\vskip.20in
\section{A sharp pre-Schwarzian gap and the affine-optimal analytic shadow}\label{sec:preschwarzian}

We begin with an estimate that is valid for every affine gauge.  Let
\(
 F=H+\overline G
\)
be orientation-preserving harmonic in $\D$ and suppose
\[
 \Omega=\frac{G'}{H'},
 \quad
 k=\norm{\Omega}_\infty<1.
\]
By \eqref{eq:harmonic-P}, we have
\begin{equation}\label{eq:P-gap}
 P_H-P_F
 =\frac{\overline\Omega\,\Omega'}{1-|\Omega|^2}.
\end{equation}
The usual Schwarz-Pick estimate applied directly to $\Omega$ gives only
\[
 (1-|z|^2)|P_H-P_F|\le|\Omega(z)|\le k.
\]
The full information $\Omega(\D)\subset k\D$ permits a sharper rescaling.

\begin{lemma}
\label{lem:scaled-SP}
Let $\Omega:\D\to k\D$ be analytic with $0<k<1$.  Then
\begin{equation}\label{eq:scaled-SP}
 |\Omega'(z)|
 \le
 \frac{k^2-|\Omega(z)|^2}
 {k(1-|z|^2)}.
\end{equation}
Equality at one point occurs if and only if $\Omega/k$ is a disk automorphism.
\end{lemma}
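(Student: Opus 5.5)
The plan is to reduce the estimate to the classical Schwarz--Pick lemma by rescaling. Put $\psi=\Omega/k$. Then $\psi:\D\to\D$ is analytic: since $\Omega(\D)\subset k\D$ with $k\D$ open, we have $|\psi|<1$ everywhere. The classical Schwarz--Pick inequality gives
\[
 |\psi'(z)|\le\frac{1-|\psi(z)|^2}{1-|z|^2}\quad(z\in\D).
\]
We will substitute $\psi'=\Omega'/k$ and $|\psi|^2=|\Omega|^2/k^2$. Multiplying through by $k$ yields
\[
 \frac{|\Omega'(z)|}{k}\le\frac{1-|\Omega(z)|^2/k^2}{1-|z|^2},
\]
which rearranges directly to \eqref{eq:scaled-SP}.

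For the equality statement, we use the equality case of Schwarz--Pick: equality at a single point $z_0$ holds if and only if $\psi$ is a disk automorphism. We will recall this quickly rather than cite it. Compose with automorphisms $\alpha,\beta$ with $\alpha(0)=z_0$ and $\beta(\psi(z_0))=0$. The map $\beta\circ\psi\circ\alpha$ fixes $0$, and the chain rule combined with the invariance of the hyperbolic metric turns equality at $z_0$ into $|(\beta\circ\psi\circ\alpha)'(0)|=1$. The equality case of the Schwarz lemma then shows that $\beta\circ\psi\circ\alpha$ is a rotation, so $\psi$ is an automorphism. Conversely, if $\psi$ is an automorphism, then the hyperbolic metric is preserved pointwise, so equality holds at every point. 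The scaling by $k$ is a bijection between the two equality conditions, which gives the claim for $\Omega/k$.

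The only point needing care is that the hypothesis is stated as $\Omega:\D\to k\D$ with $k\D$ open, so $|\psi|<1$ strictly and Schwarz--Pick applies without any boundary degeneracy. No step here is a real obstacle. The lemma is the classical Schwarz--Pick lemma transported by the dilation $\zeta\mapsto\zeta/k$, and the only bookkeeping is tracking the factor $k$ when rearranging.
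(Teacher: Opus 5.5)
Your proposal is correct and follows essentially the same route as the paper: rescale to $\psi=\Omega/k$, apply the classical Schwarz--Pick inequality, substitute back, and invoke the classical equality case. The only difference is that you sketch the equality case via the Schwarz lemma rather than citing it; also, your displayed line is the substituted inequality itself, and the factor $k$ is multiplied through only in the subsequent rearrangement, a harmless wording slip.
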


\begin{proof}
Applying the Schwarz-Pick lemma to $\psi=\Omega/k$, we obtain
\[
 \frac{|\psi'(z)|}{1-|\psi(z)|^2}
 \le\frac1{1-|z|^2}.
\]
Multiplying by $k$ and simplifying gives \eqref{eq:scaled-SP}.  The equality case follows from the classical equality condition for the Schwarz-Pick lemma.
\end{proof}

Combining \eqref{eq:P-gap} with Lemma~\ref{lem:scaled-SP} and writing $t=|\Omega(z)|$ gives
\begin{equation}\label{eq:pointwise-C}
 (1-|z|^2)|P_H(z)-P_F(z)|
 \le
 \Phi_k(t),
 \quad
 \Phi_k(t)=\frac{t(k^2-t^2)}{k(1-t^2)},
 \quad0\le t\le k.
\end{equation}

\begin{lemma}
\label{lem:Ck-max}
For $0<k<1$, $\Phi_k$ has a unique maximum in $(0,k)$.  If
\begin{equation}\label{eq:xk}
 x_k=\frac{3-k^2-\sqrt{(1-k^2)(9-k^2)}}2,
\end{equation}
then the maximizing point is $t_k=\sqrt{x_k}$ and
\begin{equation}\label{eq:Ck}
 \max_{0\le t\le k}\Phi_k(t)
 =\mathcal C(k)
 :=\frac{\sqrt{x_k}(k^2-x_k)}{k(1-x_k)}
 =\frac{2x_k^{3/2}}{k(1+x_k)}.
\end{equation}
Furthermore,
\begin{equation}\label{eq:Ck-small}
 \mathcal C(k)=\frac{2}{3\sqrt3}k^2+O(k^4)
 \quad(k\to0).
\end{equation}
\end{lemma}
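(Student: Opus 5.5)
Since $\Phi_k$ depends on $t$ only through $t^2$ apart from the prefactor $t$, I will substitute $x=t^2\in[0,k^2]$ and maximize the square,
\[
 \Psi(x)=\Phi_k(\sqrt x)^2=\frac{x(k^2-x)^2}{k^2(1-x)^2}.
\]
Then $\Phi_k(0)=\Phi_k(k)=0$ and $\Phi_k>0$ on $(0,k)$, so a maximum is attained in the interior. I will compute the logarithmic derivative:
\[
 \frac{\Psi'(x)}{\Psi(x)}=\frac1x-\frac{2}{k^2-x}+\frac{2}{1-x}.
\]
After multiplying by $x(k^2-x)(1-x)$, the critical point equation becomes the quadratic
\[
 (k^2-x)(1-x)-2x(1-x)+2x(k^2-x)=0,
 \qquad\text{that is,}\qquad
 x^2-(3-k^2)x+k^2=0.
\]
Its roots are $\bigl(3-k^2\pm\sqrt{(3-k^2)^2-4k^2}\bigr)/2$, and $(3-k^2)^2-4k^2=(1-k^2)(9-k^2)$. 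So the smaller root is exactly $x_k$ from \eqref{eq:xk}.

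For uniqueness in $(0,k^2)$, I will check root locations rather than convexity. The product of the roots is $k^2$, the larger root exceeds $1$ (equivalently, $q(1)=-2+2k^2<0$ for $q(x)=x^2-(3-k^2)x+k^2$), and $q(0)=k^2>0$ while $q(k^2)=-2k^2+2k^4<0$. Hence exactly one root $x_k$ lies in $(0,k^2)$. The sign of $\Psi'$ changes from $+$ to $-$ there, so $t_k=\sqrt{x_k}$ is the unique maximizer on $[0,k]$.

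Next I will verify the second closed form in \eqref{eq:Ck}. At $x_k$ we have $k^2-x_k=x_k^2-2x_k+\dots$; more concretely, the quadratic gives $k^2(1-x_k)=3x_k-x_k^2-k^2x_k$. It is cleaner to use the critical equation in the form
\[
 \frac{k^2-x}{1-x}=\frac{2x}{1+x},
\]
which is equivalent to $(k^2-x)(1+x)=2x(1-x)$, i.e.\ $x^2-(3-k^2)x+k^2=0$ again. Substituting this into $\sqrt{x}(k^2-x)/(k(1-x))$ gives $2x_k^{3/2}/(k(1+x_k))$. I will present this as the equation quoted in Theorem \ref{thm:intro-ps}.

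Finally, for the asymptotics, I expand $\sqrt{(1-k^2)(9-k^2)}=3\sqrt{1-\tfrac{10}{9}k^2+\tfrac{k^4}{9}}=3-\tfrac53k^2+O(k^4)$. This gives $x_k=\tfrac{k^2}{3}+O(k^4)$; alternatively, $x_k=k^2/x_k^{+}$ with $x_k^{+}=3+O(k^2)$. Then
\[
 \mathcal C(k)=\frac{2x_k^{3/2}}{k(1+x_k)}=\frac{2(k^2/3)^{3/2}(1+O(k^2))}{k}=\frac{2}{3\sqrt3}k^2+O(k^4).
\]
The only slightly delicate step is making sure the error is $O(k^4)$ rather than $O(k^3)$. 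This follows because $x_k/k^2$ is an analytic function of $k^2$ near $0$, so $x_k^{3/2}=(k^2/3)^{3/2}(1+O(k^2))$.
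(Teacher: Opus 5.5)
Your argument is correct and follows the paper's route: the substitution $x=t^2$ gives the same quadratic $x^2-(3-k^2)x+k^2=0$ (the paper reads it off the numerator $k^2+(k^2-3)t^2+t^4$ of $\Phi_k'$ rather than from the logarithmic derivative of $\Phi_k^2$), the smaller root is singled out by the sign change of this quadratic between $0$ and $k^2$, the second closed form follows from the critical-point relation $(k^2-x)(1+x)=2x(1-x)$, and the asymptotics follow from $x_k=k^2/3+O(k^4)$. Delete the discarded aside $k^2(1-x_k)=3x_k-x_k^2-k^2x_k$, which is false (the quadratic gives $k^2(1+x_k)=3x_k-x_k^2$); your actual derivation does not use it.
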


\begin{proof}
Differentiation shows that, after removal of the positive factor $1/k$, the numerator of $\Phi_k'(t)$ is
\[
 k^2+(k^2-3)t^2+t^4.
\]
With $x=t^2$, the critical-point equation is
\begin{equation}\label{eq:critical-poly}
 x^2+(k^2-3)x+k^2=0.
\end{equation}
The smaller root is $x_k$ in \eqref{eq:xk}.  Since $\Phi_k'(0)>0$ and the left derivative at $t=k$ is negative, this root lies in $(0,k^2)$ and is the unique maximizing critical point.

At the critical point, \eqref{eq:critical-poly} is equivalent to
\begin{equation}\label{eq:k2-xrelation}
 k^2=\frac{x(3-x)}{1+x}.
\end{equation}
Thus
\[
 k^2-x=\frac{2x(1-x)}{1+x},
\]
which gives the second expression in \eqref{eq:Ck}.  Finally,
\[
 \sqrt{(1-k^2)(9-k^2)}
 =3-\frac53k^2+O(k^4),
 \quad
 x_k=\frac{k^2}{3}+O(k^4),
\]
and substitution into \eqref{eq:Ck} gives \eqref{eq:Ck-small}.
\end{proof}

The preceding calculation does not require balancing.

\begin{theorem}
\label{thm:sharp-Pgap}
Let $F=H+\overline G$ be an orientation-preserving harmonic mapping in $\D$ and assume
\[
 k=\norm{G'/H'}_\infty<1.
\]
Then
\begin{equation}\label{eq:sharp-Pgap}
 \norm{P_H-P_F}\le\mathcal C(k),
\end{equation}
where $\mathcal C$ is given by \eqref{eq:Ck} with $\mathcal C(0)=0$.  The inequality is sharp for every $0<k<1$, even among globally univalent harmonic quasiconformal mappings.
\end{theorem}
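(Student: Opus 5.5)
The upper bound comes directly from the pointwise estimate \eqref{eq:pointwise-C}; sharpness comes from an explicit extremal dilatation realized through Lemma~\ref{lem:realization}.

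\emph{Upper bound.} If $k=0$, then $\Omega\equiv0$ and $P_H=P_F$ by \eqref{eq:P-gap}, so the claim is trivial. For $0<k<1$, fix $z\in\D$ and put $t=|\Omega(z)|\in[0,k]$. By \eqref{eq:P-gap} and Lemma~\ref{lem:scaled-SP},
\[
(1-|z|^2)|P_H(z)-P_F(z)|\le \frac{t}{1-t^2}\cdot\frac{k^2-t^2}{k}=\Phi_k(t).
\]
Lemma~\ref{lem:Ck-max} bounds the right-hand side by $\mathcal C(k)$. Taking the supremum over $z$ gives \eqref{eq:sharp-Pgap}.

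\emph{Sharpness.} Equality in the scaled Schwarz--Pick lemma forces $\Omega/k$ to be a disk automorphism. I take $\Omega(z)=kz$, for which $|\Omega'|=k$ and equality holds in \eqref{eq:scaled-SP} at every point. Then
\[
(1-|z|^2)|P_H-P_F|=\frac{k^2|z|(1-|z|^2)}{1-k^2|z|^2}=\Phi_k(k|z|).
\]
As $|z|$ runs over $[0,1)$, the value $t=k|z|$ covers $[0,k)$. The maximizer $t_k=\sqrt{x_k}$ lies in $(0,k)$ by Lemma~\ref{lem:Ck-max}, so it is attained at a point $z$ with $|z|=t_k/k<1$. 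Hence $\norm{P_H-P_F}=\mathcal C(k)$, with the supremum actually achieved.

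\emph{Membership in the univalent class.} Realize $\Omega(z)=kz$ via Lemma~\ref{lem:realization}. This gives $F(z)=z+\tfrac{k}{2}\overline z^{2}$, the mapping \eqref{63}, with $H(z)=z$ and $G'(z)=kz$. It is globally univalent and $\frac{1+k}{1-k}$-quasiconformal, and $\norm{G'/H'}_\infty=k$ exactly. Here $P_H=0$, so $P_H-P_F=\overline\Omega\Omega'/(1-|\Omega|^2)$ and the computation above applies verbatim.

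The argument has no substantive obstacle once Lemmas~\ref{lem:scaled-SP} and~\ref{lem:Ck-max} are available. The one point to verify is that the maximizer $t_k$ is strictly less than $k$, so that $|z|=t_k/k<1$ and the supremum is attained inside $\D$ rather than only approached; this is the statement $x_k\in(0,k^2)$ from Lemma~\ref{lem:Ck-max}.
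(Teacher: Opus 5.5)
Your proposal is correct and follows the paper's proof essentially verbatim: the bound comes from the pointwise estimate \eqref{eq:pointwise-C} combined with Lemma~\ref{lem:Ck-max}, and sharpness comes from the mapping $f_k(z)=z+\frac{k}{2}\overline{z}^{2}$ of \eqref{63}, evaluated at $|z|=\sqrt{x_k}/k\in(0,1)$. Your check that $x_k<k^2$, which puts the extremal point inside $\D$, is also exactly how the paper concludes that the supremum is attained.
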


\begin{proof}
The estimate is immediate from \eqref{eq:pointwise-C} and Lemma~\ref{lem:Ck-max}. To show sharpness, consider
\(F_k:=f_k\) given by \eqref{63}.
Then $H(z)=z$, $G(z)=kz^2/2$, and $\Omega(z)=kz$.  By Lemma~\ref{lem:realization}, $F_k$ is globally univalent and quasiconformal. Moreover, the dilatation has image \(k\mathbb D\), so that equality holds in the Schwarz-Pick lemma.  For $r=|z|$,
\[
 (1-r^2)|P_H-P_{F_k}|
 =\frac{k^2r(1-r^2)}{1-k^2r^2}
 =\Phi_k(kr).
\]
Choosing $r=\sqrt{x_k}/k\in(0,1)$ attains $\mathcal C(k)$.
\end{proof}

\begin{proposition}
\label{prop:C-monotone}
The function $\mathcal C:[0,1)\to[0,1)$ is strictly increasing on $(0,1)$, with
\[
 \mathcal C(0)=0,
 \quad
 \lim_{k\to1^-}\mathcal C(k)=1.
\]
\end{proposition}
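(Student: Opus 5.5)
We prove monotonicity through the variational description of $\mathcal C$ instead of the closed formula \eqref{eq:xk}. By Lemma~\ref{lem:Ck-max},
\[
 \mathcal C(k)=\max_{0\le t\le k}\Phi_k(t),\qquad
 \Phi_k(t)=\frac{t(k^2-t^2)}{k(1-t^2)}=\frac{t}{1-t^2}\Bigl(k-\frac{t^2}{k}\Bigr).
\]
For fixed $t\in(0,1)$, the map $k\mapsto k-t^2/k$ has derivative $1+t^2/k^2>0$. Hence $\Phi_k(t)$ is strictly increasing in $k$ on $[t,1)$. Now take $0<k_1<k_2<1$ and let $t_1=\sqrt{x_{k_1}}\in(0,k_1)$ be the maximizer for $k_1$. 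Then
\[
 \mathcal C(k_1)=\Phi_{k_1}(t_1)<\Phi_{k_2}(t_1)\le\max_{0\le t\le k_2}\Phi_{k_2}(t)=\mathcal C(k_2),
\]
because $t_1<k_1<k_2$ is admissible for $k_2$. This gives strict monotonicity on $(0,1)$. Since $\mathcal C(k)>0$ for $k>0$, monotonicity extends to $[0,1)$.

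For the range, every $t\in[0,k]$ satisfies $k^2-t^2\le k(1-t^2)$, since $k^2-kt^2\le k-kt^2$. Thus $\Phi_k(t)\le t<1$, so $\mathcal C(k)\le k<1$ and $\mathcal C$ maps into $[0,1)$. The value $\mathcal C(0)=0$ holds by convention, and it is consistent with \eqref{eq:Ck-small}.

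The step needing some care is the limit $k\to1^-$; we will not pass to $k=1$ inside \eqref{eq:xk}. Monotonicity and $\mathcal C(k)<1$ give existence of the limit $L\le1$. For the reverse bound, fix $t\in(0,1)$. For $k\in(t,1)$ we have $\mathcal C(k)\ge\Phi_k(t)$, and $\Phi_k(t)\to t(1-t^2)/(1-t^2)=t$ as $k\to1^-$. Therefore $L\ge t$ for every $t<1$, so $L=1$.

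As a cross-check with the explicit formula, \eqref{eq:xk} gives $x_k\to1$ as $k\to1^-$, and then the second expression in \eqref{eq:Ck}, $2x_k^{3/2}/(k(1+x_k))$, tends to $1$. This cross-check is not needed for the argument.
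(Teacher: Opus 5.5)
Your proof is correct. The monotonicity step is the same as the paper's: $k\mapsto\Phi_k(t)=\frac{t}{1-t^2}\bigl(k-\frac{t^2}{k}\bigr)$ is strictly increasing for fixed $t>0$, and you evaluate it at the maximizer $t_1$ for $k_1$.

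The difference is at the endpoint. The paper gets $\lim_{k\to1^-}\mathcal C(k)=1$ by direct computation from the closed form: it notes $x_k\to1$, so $2x_k^{3/2}/(k(1+x_k))\to1$. You instead squeeze $\mathcal C(k)$ between two bounds. From below you use $\Phi_k(t)$, which tends to $t$ for each fixed $t<1$. From above you use $\Phi_k(t)\le t$, which follows from $k(1-t^2)-(k^2-t^2)=(1-k)(k+t^2)\ge0$. That identity is a cleaner one-line justification than the inequality you cite.

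The paper's route is shorter once Lemma~\ref{lem:Ck-max} is available. Yours does not need the explicit root $x_k$ at all: an interior maximizer exists by compactness, since $\Phi_k$ vanishes at $t=0$ and $t=k$ and is positive in between. Your route also proves the codomain claim $\mathcal C(k)<1$ directly, in the sharper form $\mathcal C(k)\le k$. The paper leaves this implicit; it follows there only from strict monotonicity together with the limit. Finally, $\mathcal C(k)\le k$ gives continuity of $\mathcal C$ at $0$ without appealing to the expansion \eqref{eq:Ck-small}.
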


\begin{proof}
For fixed $0<t<k$,
\[
 \Phi_k(t)
 =\frac{t}{1-t^2}\left(k-\frac{t^2}{k}\right)
\]
is strictly increasing in $k$.  If $0<k_1<k_2<1$ and $t_1$ is the unique maximizer for $k_1$, then
\[
 \mathcal C(k_2)\ge \Phi_{k_2}(t_1)
 >\Phi_{k_1}(t_1)=\mathcal C(k_1).
\]
The endpoint limits follow from Lemma~\ref{lem:Ck-max}; in particular, $x_k\to1$ as $k\to1^-$ and the second expression in \eqref{eq:Ck} tends to $1$.
\end{proof}

We now reintroduce affine balancing.  For an orientation-preserving harmonic mapping $f$ with $\norm{\omega_f}_\infty<1$, write
\(
 F=\Bal[f]=H+\overline G.
\)
By Theorems~\ref{thm:optimal} and \ref{thm:canonical-section}, we have
\begin{equation}\label{eq:k-affine-min}
 \norm{G'/H'}_\infty
 =\kA(f)
 =\inf_A\norm{\omega_{A\circ f}}_\infty,
\end{equation}
where $A$ ranges over orientation-preserving real-affine target mappings.  Indeed, if $k_A=\norm{\omega_{A\circ f}}_\infty$, then \[K(A\circ f)=\frac{1+k_A}{1-k_A},\] and this function is strictly increasing in $k_A\in[0,1)$.  Since $\mathcal C$ is increasing, the balanced gauge minimizes the right-hand side of the universal estimate \eqref{eq:sharp-Pgap}.

\begin{corollary}
\label{cor:affine-optimal-Pgap}
Under the preceding assumptions,
\begin{equation}\label{eq:affine-optimal-Pgap}
 \norm{P_H-P_f}
 =\norm{P_H-P_F}
 \le\mathcal C(\kA(f)).
\end{equation}
For every target-affine representative $A\circ f$ with analytic part $H_A$,
\[
 \norm{P_{H_A}-P_f}
 \le \mathcal C\!\left(\norm{\omega_{A\circ f}}_\infty\right),
\]
and the parameter on the right is minimized by a balancing mapping.  The bound \eqref{eq:affine-optimal-Pgap} is sharp for every $0<\kA(f)<1$.
\end{corollary}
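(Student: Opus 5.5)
The plan is to assemble Corollary~\ref{cor:affine-optimal-Pgap} from three ingredients already in place: the target-affine invariance of $P_f$, the gauge-free estimate of Theorem~\ref{thm:sharp-Pgap}, and the minimization in Theorem~\ref{thm:optimal}. I will take the steps in the following order.

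\textbf{Step 1: the equality $\norm{P_H-P_f}=\norm{P_H-P_F}$.} Here $F=\Bal[f]$, and by \eqref{eq:Bal-def} we have $F=(A\circ f-A(f(0)))/(A\circ f)_z(0)$ for a balancing map $A$. So $F$ is an orientation-preserving real-affine postcomposition of $f$, namely $A$ followed by a similarity. The harmonic pre-Schwarzian $P_f=(\log J_f)_z$ is invariant under such postcompositions, as recalled in Section~\ref{sec:prelim}. One can also see this directly: $J_{B\circ f}=(|\alpha|^2-|\beta|^2)J_f$ for $B(w)=\alpha w+\beta\overline w+\gamma$, and the constant factor disappears after taking $\log$ and differentiating. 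Hence $P_F=P_f$ pointwise, which gives the equality.

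\textbf{Step 2: the bound in the balanced gauge.} I apply Theorem~\ref{thm:sharp-Pgap} to $F=H+\overline G$. This gives $\norm{P_H-P_F}\le\mathcal C(\norm{G'/H'}_\infty)$. By Corollary~\ref{cor:balanced-k}, since $F$ is balanced, $\norm{G'/H'}_\infty=\kA(F)$. By Proposition~\ref{prop:full-invariance}, $\kA(F)=\kA(f)$. This proves \eqref{eq:affine-optimal-Pgap}.

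\textbf{Step 3: arbitrary representatives and minimality.} For any $A$, let $H_A$ be the analytic part of $A\circ f$. Step~1 gives $P_{A\circ f}=P_f$, and Theorem~\ref{thm:sharp-Pgap} applied to $A\circ f$ gives the bound with parameter $\norm{\omega_{A\circ f}}_\infty$. By Theorem~\ref{thm:optimal}, $K(A\circ f)\ge\KA(f)$. Since $k\mapsto(1+k)/(1-k)$ is strictly increasing, this yields $\norm{\omega_{A\circ f}}_\infty\ge\kA(f)$, with equality exactly for balancing maps; this is \eqref{eq:k-affine-min}. Proposition~\ref{prop:C-monotone} then shows that balancing also minimizes the right-hand side.

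\textbf{Step 4: sharpness, the step needing the most care.} For a given $k\in(0,1)$ I need an $f$ with $\kA(f)=k$ that attains equality. I will use $f_k$ from \eqref{63}. Its dilatation is $\omega(z)=kz$, whose image $k\D$ has hyperbolic circumcenter $0$, so $f_k$ is already balanced and $\kA(f_k)=k$.

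The remaining point is that $\Bal[f_k]=f_k$. Since $f_k$ is balanced, the identity is a minimizing map, so Theorem~\ref{thm:canonical-section} lets me compute $\Bal[f_k]$ with $A=\mathrm{id}$. Then $f_k(0)=0$ and $(f_k)_z(0)=1$ show that the normalization in \eqref{eq:Bal-def} changes nothing.

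With $H(z)=z$, the computation in the proof of Theorem~\ref{thm:sharp-Pgap} gives $(1-r^2)|P_H-P_{f_k}|=\Phi_k(kr)$. Choosing $r=\sqrt{x_k}/k$ attains $\mathcal C(k)$, which shows the bound is sharp for every $0<k<1$.
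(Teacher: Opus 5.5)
Your proposal is correct and follows essentially the same route as the paper's proof: $P_F=P_f$ from target-affine invariance, Theorem~\ref{thm:sharp-Pgap} applied to $F=\Bal[f]$ and to each $A\circ f$, minimization of the parameter via Theorem~\ref{thm:optimal} together with the monotonicity in Proposition~\ref{prop:C-monotone}, and sharpness via the balanced maps $f_k$ of \eqref{63}. You fill in details the paper leaves implicit, namely that $\Bal[f_k]=f_k$ and the explicit evaluation $(1-r^2)|P_H-P_{f_k}|=\Phi_k(kr)$.
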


\begin{proof}
The equality $P_F=P_f$ follows from target-affine invariance of the harmonic pre-Schwarzian.  Apply Theorem~\ref{thm:sharp-Pgap} to $F$ and use \eqref{eq:k-affine-min}.  The second assertion follows from the same universal theorem applied to $A\circ f$, and monotonicity follows from Proposition~\ref{prop:C-monotone}.  Sharpness is realized by the balanced mappings $F_k$ in \eqref{63}.
\end{proof}

In what follows, we give the proof of Theorem~\ref{thm:intro-ps}.
\begin{proof}[Proof of Theorem~\ref{thm:intro-ps}]
Let $F=\Bal[f]=H+\overline G$ and put $\Omega=G'/H'$ and $k=\|\Omega\|_\infty=\kA(f)$.  Target-affine invariance of the harmonic pre-Schwarzian gives $P_F=P_f$, hence
\[
 P_H-P_f=P_H-P_F=\frac{\overline\Omega\,\Omega'}{1-|\Omega|^2}.
\]
If $k=0$, then $\Omega\equiv0$ and the assertion is immediate.  Assume $0<k<1$.  Applying Schwarz-Pick lemma to $\Omega/k:\D\to\D$ yields
\[
 |\Omega'(z)|\le\frac{k^2-|\Omega(z)|^2}{k(1-|z|^2)}.
\]
Therefore, with $t=|\Omega(z)|$,
\[
 (1-|z|^2)|P_H(z)-P_f(z)|
 \le\frac{t(k^2-t^2)}{k(1-t^2)}=\Phi_k(t).
\]
By Lemma~\ref{lem:Ck-max}, we get \[\max_{0\le t\le k}\Phi_k(t)=\mathcal C(k)\] with $\mathcal C$ given by \eqref{eq:Ck}; this proves \eqref{eq:intro-ps}.  The expansion \eqref{eq:Ck-asymptotic-intro} is \eqref{eq:Ck-small}.

For sharpness, consider the balanced globally univalent harmonic quasiconformal mapping $F_k = f_k$ given by \eqref{63}, with
\(\Omega(z)=kz\).
Equality holds in the Schwarz-Pick lemma for $\Omega/k = z$, and at $|z|=\sqrt{x_k}/k$, the function $\Phi_k(k|z|)$ equals $\mathcal C(k)$.  Thus equality is attained for every $0<k<1$.

Finally, if $A$ is any orientation-preserving target-affine mapping and $k_A=\|\omega_{A\circ f}\|_\infty$, the same argument gives the universal bound with $\mathcal C(k_A)$.  By Theorem~\ref{thm:intro-optimal}, $k_A$ is minimized exactly by a balancing gauge, and by Propostion~\ref{prop:C-monotone} so is $\mathcal C(k_A)$.  This proves the affine-optimal assertion.
\end{proof}

\begin{example}
Consider \(f_k(z)\) given by \eqref{63}. By our realization lemma, since \(\omega_f(\mathbb D)=k\mathbb D\) is centered at the origin, \(f_k\) is already its own canonical balanced representative, so \(F=B[f_k]=f_k\) and \(k_{\text{aff}}(f_k)=k\). Theorem~\ref{thm:intro-ps} yields \(\|P_H-P_{f_k}\|\le C(k)\). This mapping does not realize equality; hence the inequality is strict. Moreover, for any orientation-preserving real-affine target mapping \(A\), we have \(\|\omega_{A\circ f_k}\|_\infty\ge k\), confirming that balancing minimizes the relevant parameter over the affine orbit.    
\end{example}

\begin{example}
For fixed \(0<k<1\), take an analytic dilatation \(\omega\) whose image set satisfies the three‑point support geometry realizing the extremum of the pre-Schwarzian shadow functional, with parameters governed by the root \(x_k\) from formula \eqref{eq:xk-intro}. Applying our realization lemma and the canonical balancing procedure yields the balanced representative \(F=H+\overline G\). By construction \(\|G'/H'\|_\infty=k_{\text{aff}}(f)=k\) and \(\|P_H-P_F\|=C(k)\), so equality holds in estimate \eqref{eq:intro-ps}. As \(k\to 0\), the shadow magnitude behaves as \(\frac{2}{3\sqrt 3}k^2+O(k^4)\), consistent with the asymptotic expansion \eqref{eq:Ck-asymptotic-intro}. This example demonstrates sharpness for every admissible \(k\). 
\end{example}

\begin{corollary}
\label{cor:Pnorm-comparison}
If $F=H+\overline G$ is as in Theorem~\ref{thm:sharp-Pgap}, then
\begin{equation}\label{eq:Pnorm-comparison}
 \bigl|\norm{P_H}-\norm{P_F}\bigr|
 \le\mathcal C(k).
\end{equation}
In particular,
\[
 \bigl|\norm{P_H}-\norm{P_F}\bigr|
 \le\frac{2}{3\sqrt3}k^2+O(k^4)
 \quad(k\to0).
\]
\end{corollary}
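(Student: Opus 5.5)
This follows from the reverse triangle inequality for the seminorm $\norm{\cdot}$ of \eqref{eq:Pnorm}, combined with Theorem~\ref{thm:sharp-Pgap}. The only thing to check is that $\norm{P_H}$ and $\norm{P_F}$ are finite, or else to read the inequality in the extended sense.

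First, for each $z\in\D$ the pointwise triangle inequality gives
\[
 (1-|z|^2)|P_H(z)|\le (1-|z|^2)|P_F(z)|+(1-|z|^2)|P_H(z)-P_F(z)|\le \norm{P_F}+\norm{P_H-P_F}.
\]
Taking the supremum over $z$ yields $\norm{P_H}\le\norm{P_F}+\norm{P_H-P_F}$. Exchanging the roles of $H$ and $F$ gives the symmetric inequality. Hence, whenever one of the two norms is finite, both are finite and
\[
 \bigl|\norm{P_H}-\norm{P_F}\bigr|\le\norm{P_H-P_F}.
\]
Now apply Theorem~\ref{thm:sharp-Pgap}, which gives $\norm{P_H-P_F}\le\mathcal C(k)$. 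This proves \eqref{eq:Pnorm-comparison}. The asymptotic form then follows by substituting the expansion \eqref{eq:Ck-small} of Lemma~\ref{lem:Ck-max}.

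The only delicate point is finiteness. The difference $P_H-P_F$ is always bounded in the weighted norm, with bound $\mathcal C(k)<1$. Therefore $\norm{P_H}<\infty$ if and only if $\norm{P_F}<\infty$. If both norms are infinite, the left-hand side is not defined, and I would state the corollary under the tacit assumption that one of the norms is finite. This holds, for instance, when $F$ is univalent, since then $H$ is univalent by Clunie--Sheil-Small and $\norm{P_H}\le 6$. I expect no further obstacle.
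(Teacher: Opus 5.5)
Your argument is the paper's: the reverse triangle inequality in the weighted supremum norm \eqref{eq:Pnorm}, followed by Theorem~\ref{thm:sharp-Pgap} and the expansion \eqref{eq:Ck-small}. Your remark that $\norm{P_H}<\infty$ if and only if $\norm{P_F}<\infty$ is a legitimate point that the paper passes over. It means the left side of \eqref{eq:Pnorm-comparison} only makes sense when one of the two norms is finite.

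One correction to your illustrative aside. Univalence of $F$ does \emph{not} force $H$ to be univalent. For the harmonic Koebe function $K=h+\overline g$ one has $h(z)=z+\tfrac52 z^2+\cdots$. The dilation $K_r(z)=K(rz)/r$ is univalent and quasiconformal, with dilatation $rz$. Yet its analytic part has second coefficient $\tfrac52 r>2$ for $r>4/5$, which is impossible for a univalent function by Bieberbach's theorem.

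Finiteness of $\norm{P_F}$ for univalent $F$ is nevertheless true, for a different reason. The family $\SH$ is affine and linear invariant of finite order (Clunie--Sheil-Small). Since $P$ is affine invariant, $\sup_{F\in\SH}|P_F(0)|$ is twice that order, and the chain-rule argument used in Proposition~\ref{prop:Pnorm-order} then bounds $\norm{P_F}$. Your own observation finally gives $\norm{P_H}\le\norm{P_F}+\mathcal C(k)<\infty$.
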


\begin{proof}
Applying the triangle inequality in the weighted supremum norm and invoking Theorem~\ref{thm:sharp-Pgap}, we
get the desired assertion.
\end{proof}

\begin{corollary}
\label{cor:a2-shadow}
Suppose that
\(
 F(0)=0,\ H'(0)=1,
\)
and write
\[
 H(z)=z+a_2z^2+a_3z^3+\cdots.
\]
Then
\begin{equation}\label{eq:a2-shadow}
 \left|2a_2-P_F(0)\right|
 \le\mathcal C(k).
\end{equation}
The constant is sharp, even in the class of balanced normalized globally univalent harmonic quasiconformal mappings.
\end{corollary}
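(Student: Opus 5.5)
Since $H$ is analytic with $H'(0)=1$ and $H''(0)=2a_2$, we have $P_H(0)=H''(0)/H'(0)=2a_2$. The inequality \eqref{eq:a2-shadow} is therefore the evaluation at $z=0$ of the weighted norm estimate in Theorem~\ref{thm:sharp-Pgap}:
\[
 \left|2a_2-P_F(0)\right|=(1-|0|^2)\,|P_H(0)-P_F(0)|\le\norm{P_H-P_F}\le\mathcal C(k).
\]
The only real work is sharpness. The extremal mapping $f_k$ from \eqref{63} does not give equality at the origin, because $\Omega(0)=0$ there and the gap vanishes. The maximum of $(1-|z|^2)|P_H-P_{f_k}|$ is attained on the circle $|z|=\sqrt{x_k}/k$, so the plan is to move such a point to the origin with a disk automorphism and renormalize.

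Concretely, fix $z_0$ with $|z_0|=\sqrt{x_k}/k$, let $\phi(z)=(z+z_0)/(1+\overline{z_0}z)$, and set $F=\mathcal K_\phi f_k$ as in \eqref{eq:harmonic-Koebe-general}. I would check the required properties in order.
\begin{enumerate}[label=\textup{(\alph*)}]
\item $F$ is univalent, orientation-preserving, and satisfies $F(0)=0$ and $H'(0)=1$ by construction.
\item The dilatation of $F$ is a unimodular constant times $\omega_{f_k}\circ\phi$, by \eqref{eq:precomp-omega} and the remark that similarities rotate dilatations. Its image is therefore still the disk $k\D$, which has circumcenter $0$. Hence $F$ is balanced and its dilatation norm is still $k$. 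Alternatively, this follows from Proposition~\ref{prop:Bal-Koebe}, since $F=\Bal[f_k\circ\phi]$.
\item The pre-Schwarzian gap transforms covariantly. Both $P_{H\circ\phi}$ and $P_{f\circ\phi}$ pick up the same additive term $\phi''/\phi'$, so
\[
 P_{H\circ\phi}-P_{f\circ\phi}=\bigl((P_H-P_f)\circ\phi\bigr)\phi'.
\]
Dividing $F$ by a complex constant $\lambda$ changes the analytic part to $H/\lambda$ and leaves both pre-Schwarzians unchanged, by affine invariance.
\end{enumerate}

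Combining (c) with $(1-|z|^2)|\phi'(z)|=1-|\phi(z)|^2$ gives
\[
 |2a_2(F)-P_F(0)|=(1-|z_0|^2)\,|P_{H_k}(z_0)-P_{f_k}(z_0)|=\Phi_k(k|z_0|)=\mathcal C(k),
\]
by the computation in the proof of Theorem~\ref{thm:sharp-Pgap}. This gives equality in \eqref{eq:a2-shadow} within the stated class.

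I expect the main point needing care to be the verification that the normalized Koebe transform stays balanced with the same $k$ while the pre-Schwarzian gap transforms isometrically in the weighted sense. Both facts reduce to the transformation rules already recorded in the paper, namely \eqref{eq:precomp-omega}, the target-affine invariance of $P_f$, and the chain rule for $P$.
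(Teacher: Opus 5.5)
Your proof is correct. The inequality is handled exactly as in the paper. For sharpness you take a related but different route.

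\textbf{The paper's extremal.} The paper realizes the dilatation $\Omega_{k,a}(z)=k(z+a)/(1+az)$, with $a=\sqrt{x_k}/k$, directly through Lemma~\ref{lem:realization} with analytic part $H(z)=z$. So $a_2=0$, and the whole gap sits in $P_F(0)=-\overline{\Omega(0)}\,\Omega'(0)/(1-|\Omega(0)|^2)$. Equality then comes from the equality case of Lemma~\ref{lem:scaled-SP} at the origin, together with $|\Omega(0)|=\sqrt{x_k}$.

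\textbf{Your extremal.} Take the normalized Koebe transform of $f_k$ at $z_0=a$. Here $\lambda=h'(z_0)\phi'(0)=1-a^2>0$, so there is no rotation, and its dilatation is exactly $\Omega_{k,a}$. Its analytic part, however, is $H(z)=z/(1+az)$. Hence $2a_2=-2a\neq 0$, and the gap is split between $P_H(0)$ and $P_F(0)$.

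\textbf{Comparison.} Since $P_H-P_F=\overline\Omega\,\Omega'/(1-|\Omega|^2)$ depends only on the dilatation, the two extremals have identical gap functions. Your chain-rule bookkeeping in step (c) could therefore be replaced by this one-line observation.

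What your route buys is a structural fact. The weighted gap $(1-|z|^2)|P_H-P_F|$ is invariant under normalized Koebe transforms, and these preserve balancedness and $k$ (Proposition~\ref{prop:Bal-Koebe}). So in any Koebe-invariant class, the extremal problem for $|2a_2-P_F(0)|$ has the same value as the norm problem for $\norm{P_H-P_F}$.

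The paper's route is more economical. It needs only the realization lemma and the equality case of Schwarz--Pick, with no chain rule and no Koebe transform.
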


\begin{proof}
Since $P_H(0)=H''(0)/H'(0)=2a_2$, the inequality is the pointwise consequence of Theorem~\ref{thm:sharp-Pgap}. For sharpness, let $t_k=\sqrt{x_k}$ and choose $a=t_k/k\in(0,1)$.  Put
\[
 \Omega_{k,a}(z)
 =k\frac{z+a}{1+az}.
\]
Then $\Omega_{k,a}/k$ is a disk automorphism, $\Omega_{k,a}(\D)=k\D$, and $|\Omega_{k,a}(0)|=t_k$.  Define
\[
 G_{k,a}(z)=\int_0^z\Omega_{k,a}(\zeta)\,d\zeta,
 \quad
 F_{k,a}(z)=z+\overline{G_{k,a}(z)}.
\]
By Lemma~\ref{lem:realization}, $F_{k,a}$ is globally univalent and quasiconformal; it is balanced because its dilatation image is $k\D$, and it satisfies $F_{k,a}(0)=0$, $H'(0)=1$.  Equality holds in Lemma~\ref{lem:scaled-SP} at $0$, while $|\Omega_{k,a}(0)|=t_k$ is the maximizing value in Lemma~\ref{lem:Ck-max}.  Hence
\[
 |2a_2-P_{F_{k,a}}(0)|=\mathcal C(k).
\]
\end{proof}

\begin{remark}\label{rem:quadratic-significance}
The quadratic small-$k$ behavior is a consequence of using the full bound $\Omega(\D)\subset k\D$ in Schwarz-Pick lemma; it is available in every fixed affine gauge.  The genuinely affine statement is that balancing replaces the gauge-dependent number $\norm{\omega_{A\circ f}}_\infty$ by its minimum $\kA(f)$.  Thus balancing gives the strongest member of the sharp family of estimates \eqref{eq:sharp-Pgap}, rather than changing the order of the Schwarz-Pick estimate itself.
\end{remark}
\vskip.20in
\section{Affine-balanced linear invariant families}\label{sec:families}

We now connect the global affine geometry to the classical language of invariant families.  Our goal is not to replace the established ALIF theory of Sheil-Small~\cite{SheilSmall1990} and Chuaqui-Hern\'andez-Mart\'in~\cite{ChuaquiHernandezMartin2017}, but to define a quotient-level normalization on which bounded-distortion constraints become genuinely affine stable.

\subsection{Balanced linear invariance}

Let $\mathcal F$ be a class of target-affine equivalence classes of univalent harmonic mappings in $\D$.  Its balanced section is
\[
 \mathcal F^{\flat}
 =\{\Bal[f]:[f]\in\mathcal F\}.
\]
We say that $\mathcal F$ is \emph{balanced-linearly invariant} if, for every $F\in\mathcal F^{\flat}$ and every $\phi\in\Aut(\D)$,
\(
 \Bal[F\circ\phi]\in\mathcal F^{\flat}.
\)
By Proposition~\ref{prop:Bal-Koebe}, this is the natural quotient-level analogue of the usual linear invariance.

The universal example is
\begin{equation}\label{eq:SD}
 \mathcal S^{\flat}(D)
 =\{\Bal[f]:f\in\mathscr S(D)\}.
\end{equation}
It is balanced-linearly invariant, and by Corollary~\ref{cor:uniform-balanced}, all its members have
\[
 \norm{\omega}_\infty\le k_D
 =\tanh\frac{J(D)}2.
\]

\subsection{A balanced coefficient order}

Let $\mathcal F^{\flat}$ be a balanced-linearly invariant family of normalized mappings
\[
 F=H+\overline G,
 \quad
 F(0)=0,
 \quad H'(0)=1,
\]
with a uniform bound
\begin{equation}\label{eq:family-k0}
 \norm{G'/H'}_\infty\le k_0<1.
\end{equation}
Write
\[
 H(z)=z+a_2(F)z^2+\cdots.
\]
Define the \emph{balanced coefficient order}
\begin{equation}\label{eq:alpha-flat}
 \alpha_\flat(\mathcal F)
 =\sup_{F\in\mathcal F^{\flat}}|a_2(F)|
\end{equation}
and the \emph{affine pre-Schwarzian order}
\begin{equation}\label{eq:sigma-flat}
 \sigma_\flat(\mathcal F)
 =\frac12\sup_{F\in\mathcal F^{\flat}}|P_F(0)|.
\end{equation}
The second quantity is particularly natural because $P_F$ is invariant under target-affine transformations.

\begin{theorem}
\label{thm:orders}
Under \eqref{eq:family-k0}, we have
\begin{equation}\label{eq:order-comparison}
 \left|
 \alpha_\flat(\mathcal F)-\sigma_\flat(\mathcal F)
 \right|
 \le\frac12\mathcal C(k_0).
\end{equation}
If the family is contained in $\mathcal S^{\flat}(D)$, one may take
\begin{equation}\label{eq:k0-D}
 k_0=\tanh\frac{J(D)}2.
\end{equation}
Consequently, for small $D$ the difference between the two orders is quadratic in the affine defect.
\end{theorem}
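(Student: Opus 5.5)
The plan is to reduce the statement to the pointwise coefficient estimate of Corollary~\ref{cor:a2-shadow}, applied uniformly over the family, and then pass to suprema.

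\textbf{Pointwise comparison.} Fix $F=H+\overline G\in\mathcal F^{\flat}$ with $F(0)=0$ and $H'(0)=1$. Its dilatation norm $k_F=\norm{G'/H'}_\infty$ satisfies $k_F\le k_0$ by \eqref{eq:family-k0}. Since $P_H(0)=2a_2(F)$, Theorem~\ref{thm:sharp-Pgap} evaluated at $z=0$ gives
\[
 |2a_2(F)-P_F(0)|\le\mathcal C(k_F).
\]
Proposition~\ref{prop:C-monotone} says $\mathcal C$ is increasing, so $\mathcal C(k_F)\le\mathcal C(k_0)$. (If $k_F=0$, then $\mathcal C(0)=0$ and the bound holds trivially.) Dividing by $2$ and applying the reverse triangle inequality yields
\[
 \bigl||a_2(F)|-\tfrac12|P_F(0)|\bigr|\le\tfrac12\mathcal C(k_0)
 \quad\text{for every } F\in\mathcal F^{\flat}.
\]

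\textbf{Passage to suprema.} If two functions $u,v$ on the same index set satisfy $|u-v|\le\varepsilon$ pointwise, then $|\sup u-\sup v|\le\varepsilon$. Indeed, $u\le v+\varepsilon\le\sup v+\varepsilon$ gives $\sup u\le\sup v+\varepsilon$, and the reverse inequality follows by symmetry. Applying this with $u(F)=|a_2(F)|$ and $v(F)=\frac12|P_F(0)|$ gives \eqref{eq:order-comparison}.

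One point needs care: infinite suprema. In that case the pointwise inequality shows that both orders are infinite simultaneously, and the difference should be read with that convention. For a family of normalized univalent mappings this does not actually occur: after the affine normalization used in Theorem~\ref{thm:normality}, the family embeds in a compact class, so $|a_2|$ is bounded there. I would note this briefly rather than dwell on it.

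\textbf{The case $\mathcal F^\flat\subset\mathcal S^\flat(D)$.} Corollary~\ref{cor:uniform-balanced} gives $\norm{\omega_F}_\infty\le\tanh(J(D)/2)$ for every member. This is exactly hypothesis \eqref{eq:family-k0} with $k_0$ as in \eqref{eq:k0-D}.

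\textbf{Small-$D$ behaviour.} Corollary~\ref{cor:small-jung} gives $J(D)=D/\sqrt3+O(D^3)$, hence $k_0=D/(2\sqrt3)+O(D^3)$. Inserting this into \eqref{eq:Ck-small} gives
\[
 \tfrac12\mathcal C(k_0)=\tfrac1{3\sqrt3}k_0^2+O(k_0^4)=\tfrac{D^2}{36\sqrt3}+O(D^4),
\]
which is quadratic in the affine defect.

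There is no real obstacle here. The only delicate point is handling the suprema correctly, including the finiteness convention.
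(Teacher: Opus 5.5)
Your proposal is correct and follows essentially the same route as the paper. You use the pointwise bound $|2a_2(F)-P_F(0)|\le\mathcal C(k_0)$ from Corollary~\ref{cor:a2-shadow} (that is, Theorem~\ref{thm:sharp-Pgap} at $z=0$) together with monotonicity of $\mathcal C$, pass to suprema, take $k_0$ from Corollary~\ref{cor:uniform-balanced}, and expand $J(D)=D/\sqrt3+O(D^3)$. Your extra care with infinite suprema and your explicit constant $\tfrac12\mathcal C(k_0)=D^2/(36\sqrt3)+O(D^4)$, which is consistent with Corollary~\ref{cor:second-order}, are harmless refinements of the same argument.
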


\begin{proof}
For each $F=H+\overline G$ in the balanced family, Corollary~\ref{cor:a2-shadow} gives
\[
 |2a_2(F)-P_F(0)|\le\mathcal C(k_0),
\]
because $\mathcal C$ is increasing by Proposition~\ref{prop:C-monotone}.  Hence
\[
 2|a_2(F)|\le|P_F(0)|+\mathcal C(k_0)
\]
and
\[
 |P_F(0)|\le2|a_2(F)|+\mathcal C(k_0).
\]
Taking suprema yields \eqref{eq:order-comparison}.  The choice \eqref{eq:k0-D} follows from Corollary~\ref{cor:uniform-balanced}.  Finally, \[J(D)=\frac{D}{\sqrt3}+O(D^3)\] and \[k_0=\frac{J(D)}{2}+O(D^3),\] while \[\mathcal C(k_0)=O(k_0^2)=O(D^2).\]
\end{proof}

This should be compared with the classical order notions in ALIF theory, which first moves $\omega(0)$ to $0$ by an affine transformation and then studies the analytic second coefficient; see~\cite{ChuaquiHernandezMartin2017,Graf2016,SheilSmall1990}.  The balanced order uses the entire dilatation range, so it is adapted to affine-invariant bounded-distortion geometry rather than to a single base point.

\subsection{Global pre-Schwarzian control from the balanced order}

The chain rule for the harmonic pre-Schwarzian is
\begin{equation}\label{eq:P-chain}
 P_{F\circ\phi}
 =(P_F\circ\phi)\phi'+P_\phi.
\end{equation}
For
\[
 \phi_a(z)=\frac{z+a}{1+\overline a z},
\]
we have
\[
 \phi_a(0)=a,
 \quad
 \phi_a'(0)=1-|a|^2,
 \quad
 P_{\phi_a}(0)=-2\overline a.
\]
Target balancing and normalization do not change $P$ because $P$ is affine invariant.  Thus balanced-linear invariance gives a standard order-to-norm estimate.

\begin{proposition}
\label{prop:Pnorm-order}
Let $\mathcal F^{\flat}$ be balanced-linearly invariant and let $\sigma_\flat=\sigma_\flat(\mathcal F)<\infty$.  Then every $F\in\mathcal F^{\flat}$ satisfies
\begin{equation}\label{eq:Pnorm-sigma}
 \norm{P_F}\le2(\sigma_\flat+1).
\end{equation}
Consequently, under \eqref{eq:family-k0},
\begin{equation}\label{eq:Pnorm-alpha}
 \norm{P_F}
 \le2\alpha_\flat(\mathcal F)
 +\mathcal C(k_0)+2.
\end{equation}
\end{proposition}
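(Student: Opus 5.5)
The plan is to derive the weighted bound at an arbitrary point $a\in\D$ from the bound at the origin, using balanced-linear invariance and the chain rule \eqref{eq:P-chain}. Fix $F\in\mathcal F^{\flat}$ and $a\in\D$, and set $\phi_a(z)=(z+a)/(1+\overline a z)$. By balanced-linear invariance, $\Phi:=\Bal[F\circ\phi_a]\in\mathcal F^{\flat}$. By the definition \eqref{eq:sigma-flat}, this gives $|P_\Phi(0)|\le2\sigma_\flat$. Since $\Bal$ only postcomposes with a target-affine map and then applies an affine normalization, the target-affine invariance of the harmonic pre-Schwarzian gives $P_\Phi=P_{F\circ\phi_a}$.

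Next we evaluate \eqref{eq:P-chain} at $z=0$. Using $\phi_a(0)=a$, $\phi_a'(0)=1-|a|^2$ and $P_{\phi_a}(0)=-2\overline a$, we obtain
\[
 P_\Phi(0)=P_F(a)(1-|a|^2)-2\overline a .
\]
Hence
\[
 (1-|a|^2)|P_F(a)|\le|P_\Phi(0)|+2|a|\le2\sigma_\flat+2 .
\]
Taking the supremum over $a\in\D$ yields \eqref{eq:Pnorm-sigma}.

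One point needs care: the chain rule \eqref{eq:P-chain} for the harmonic pre-Schwarzian must hold under precomposition by the analytic map $\phi_a$. I will take this from \cite{HernandezMartin2015}, as the paper asserts. It can also be checked directly. We have $J_{F\circ\phi}=(J_F\circ\phi)|\phi'|^2$, so
\[
 \log J_{F\circ\phi}=\log J_F\circ\phi+\log\phi'+\log\overline{\phi'} .
\]
Differentiating in $z$ gives $(P_F\circ\phi)\phi'+\phi''/\phi'$, because $\log\overline{\phi'}$ is antiholomorphic. Here $P_\phi=\phi''/\phi'$, and $\phi_a''(0)/\phi_a'(0)=-2\overline a$ by direct computation.

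For \eqref{eq:Pnorm-alpha}, I will use Theorem \ref{thm:orders}. Under \eqref{eq:family-k0}, it gives $\sigma_\flat\le\alpha_\flat+\tfrac12\mathcal C(k_0)$. Substituting this into \eqref{eq:Pnorm-sigma} gives $2\sigma_\flat+2\le2\alpha_\flat+\mathcal C(k_0)+2$. This also shows $\sigma_\flat<\infty$ whenever $\alpha_\flat<\infty$.

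The main obstacle is bookkeeping rather than analysis. We must confirm that the member of $\mathcal F^{\flat}$ produced by invariance has the same pre-Schwarzian as $F\circ\phi_a$. This holds because $\Bal$ and the Koebe normalization in Proposition \ref{prop:Bal-Koebe} differ from $F\circ\phi_a$ only by target-affine maps and additive constants, and neither of these changes $P$.
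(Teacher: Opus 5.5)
Your proposal is correct and matches the paper's proof step for step. You rebalance the Koebe transform $F\circ\phi_a$ so that it lands in $\mathcal F^{\flat}$, and use target-affine invariance of $P$ together with the chain rule at $0$ to obtain $|(1-|a|^2)P_F(a)-2\overline a|\le2\sigma_\flat$. The triangle inequality and a supremum over $a$ then finish the first bound, and Theorem~\ref{thm:orders} gives the second; your direct check of the chain rule via $J_{F\circ\phi}=(J_F\circ\phi)|\phi'|^2$ is a sound detail that the paper leaves implicit.
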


\begin{proof}
Fix $a\in\D$.  Apply the normalized Koebe transform corresponding to $\phi_a$, followed by balancing.  The resulting mapping belongs to $\mathcal F^{\flat}$, and target-affine transformations do not change the pre-Schwarzian.  Hence the chain rule at $0$ gives
\[
 \left|(1-|a|^2)P_F(a)-2\overline a\right|
 \le2\sigma_\flat.
\]
Therefore,
\[
 (1-|a|^2)|P_F(a)|
 \le2\sigma_\flat+2|a|
 \le2(\sigma_\flat+1).
\]
Taking the supremum over $a$ proves \eqref{eq:Pnorm-sigma}.  The estimate \eqref{eq:Pnorm-alpha} follows from Theorem~\ref{thm:orders}.
\end{proof}

The form of \eqref{eq:Pnorm-sigma} is analogous to the classical order estimates for affine-linear invariant families; compare Graf~\cite{Graf2016} and the discussion in~\cite{LiuPonnusamy2019}.  The new feature is that $\sigma_\flat$ can be compared, with an explicit affine-defect error, to the analytic coefficient of the canonical balanced shadow.

\subsection{The small-defect regime}

The previous results become especially transparent near the affine-analytic locus.  Suppose $\dA(f)=D\ll1$.  Then
\[
 \rA(f)\le\frac D{\sqrt3}+O(D^3),
\]
hence
\begin{equation}\label{eq:k-D-small}
 \kA(f)
 =\tanh\frac{\rA(f)}2
 \le\frac{D}{2\sqrt3}+O(D^3).
\end{equation}
Combining with \eqref{eq:Ck-small}, we have
\begin{equation}\label{eq:C-D-small}
 \mathcal C(\kA(f))
 \le\frac{D^2}{18\sqrt3}+O(D^4).
\end{equation}
Thus the canonical analytic shadow approximates the harmonic pre-Schwarzian to second order in the affine diameter.

\begin{corollary}
\label{cor:second-order}
Let $f$ be an orientation-preserving harmonic mapping with $\norm{\omega_f}_\infty<1$ and $\dA(f)=D$, and let
\(
 F=\Bal[f]=H+\overline G.
\)
As $D\to0$,
\begin{equation}\label{eq:second-order}
 \norm{P_H-P_F}
 \le\frac{D^2}{18\sqrt3}+O(D^4).
\end{equation}
Since $P_F=P_f$ by target-affine invariance,
\begin{equation}\label{eq:second-order-original}
 \norm{P_H-P_f}
 \le\frac{D^2}{18\sqrt3}+O(D^4).
\end{equation}
\end{corollary}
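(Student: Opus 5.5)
The plan is to chain three earlier results: the sharp one-gauge estimate for the balanced representative, the monotonicity of $\mathcal C$, and the small-diameter form of the Jung bound. We first replace $P_f$ by $P_F$ and reduce everything to bounding $\mathcal C(\kA(f))$ in terms of $D$.

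\textbf{Step 1: reduce to the balanced representative.} Since $F=\Bal[f]=A\circ f$ normalized by a target similarity, the harmonic pre-Schwarzian is unchanged, so $P_F=P_f$. This shows that \eqref{eq:second-order-original} is identical to \eqref{eq:second-order}. Corollary~\ref{cor:affine-optimal-Pgap} (equivalently Theorem~\ref{thm:sharp-Pgap} applied to $F$, with $\norm{G'/H'}_\infty=\kA(f)$ by Corollary~\ref{cor:balanced-k}) gives
\[
\norm{P_H-P_F}\le \mathcal C(\kA(f)).
\]
If $D=0$, then Proposition~\ref{prop:zero-defect} gives $\kA(f)=0$, so the left side vanishes. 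Hence we may assume $D>0$.

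\textbf{Step 2: bound $\kA(f)$ by $D$.} Theorem~\ref{thm:affine-jung} gives $\rA(f)\le J(D)$. Because $\tanh(\cdot/2)$ is increasing, $\kA(f)\le \tanh(J(D)/2)$. Corollary~\ref{cor:small-jung} gives $J(D)=D/\sqrt3+O(D^3)$. Since $\tanh(x/2)=x/2+O(x^3)$, we obtain
\[
\kA(f)\le k_D:=\tanh\frac{J(D)}{2}=\frac{D}{2\sqrt3}+O(D^3).
\]
This is \eqref{eq:k-D-small}.

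\textbf{Step 3: compose with $\mathcal C$.} Proposition~\ref{prop:C-monotone} says $\mathcal C$ is increasing, so $\mathcal C(\kA(f))\le\mathcal C(k_D)$. By Lemma~\ref{lem:Ck-max}, $\mathcal C(k)=\tfrac{2}{3\sqrt3}k^2+O(k^4)$. Substituting $k_D^2=D^2/12+O(D^4)$ gives
\[
\mathcal C(k_D)=\frac{2}{3\sqrt3}\cdot\frac{D^2}{12}+O(D^4)=\frac{D^2}{18\sqrt3}+O(D^4).
\]
This is \eqref{eq:C-D-small}, and it yields both displayed inequalities.

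\textbf{Main obstacle: uniformity of the error terms.} The $O$-terms must be uniform, meaning they depend only on $D$ and not on $f$. This holds because the bound $\mathcal C(k_D)$ is a fixed function of $D$ alone. It remains to check that $\mathcal C(k)-\tfrac{2}{3\sqrt3}k^2$ is genuinely $O(k^4)$. For this we note that $x_k$ is real-analytic and even in $k$ near $0$, by \eqref{eq:xk}, and $x_k/k^2\to1/3$. Consequently $\mathcal C(k)=2x_k^{3/2}/(k(1+x_k))$ is $k^2$ times an analytic function of $k^2$, so no odd-order terms such as $k^3$ appear.
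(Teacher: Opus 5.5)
Your proof is correct and follows the paper's own argument. The paper likewise applies Theorem~\ref{thm:sharp-Pgap} to the balanced representative (with $P_F=P_f$), bounds $\kA(f)$ via the small-diameter Jung estimate \eqref{eq:k-D-small}, and substitutes into \eqref{eq:Ck-small} to obtain the coefficient $\frac{2}{3\sqrt3}\cdot\frac{1}{12}=\frac{1}{18\sqrt3}$. You also spell out two points the paper leaves implicit: the monotonicity of $\mathcal C$ (Proposition~\ref{prop:C-monotone}) needed to insert the upper bound for $\kA(f)$ into $\mathcal C$, and the uniformity of the error term, since the final bound $\mathcal C\bigl(\tanh(J(D)/2)\bigr)$ depends on $D$ alone.
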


\begin{proof}
By \eqref{eq:Ck-small}, \eqref{eq:k-D-small}, and Theorem~\ref{thm:sharp-Pgap}, the leading coefficient is
\[
 \frac{2}{3\sqrt3}\left(\frac{1}{2\sqrt3}\right)^2
 =\frac1{18\sqrt3}.
\]
\end{proof}

This gives a perturbative principle: families with small affine dilatation diameter can be studied, to first order, through the analytic part of the balanced representative, while the harmonic correction enters only at second order at the pre-Schwarzian level.
\vskip.20in
\section{Further geometric consequences}\label{sec:further}

We collect several consequences that clarify the geometry of the new invariants.

\subsection{The center-displacement estimate}

Let $f$ be an orientation-preserving harmonic mapping with $\norm{\omega_f}_\infty<1$, let $c=\cA(f)$, and set $R=\rA(f)$.  Define the ordinary hyperbolic radius from the origin
\begin{equation}\label{eq:r0-def}
 r_0(f)=\sup_{\zeta\in E_f}d_{\D}(0,\zeta)=\log K(f).
\end{equation}
Then
\begin{equation}\label{eq:r0-triangle}
 R\le r_0(f)\le R+d_{\D}(0,c).
\end{equation}
The first inequality is minimality of the circumradius, and the second is the triangle inequality.  If $E_f$ is a hyperbolic ball centered at $c$, equality holds in the second estimate.  Thus the excess ordinary distortion over the optimal affine distortion is controlled by the circumradius together with the displacement of the chosen conformal origin from the circumcenter; for hyperbolic-ball dilatation ranges this control is exact.

In exponential form,
\begin{equation}\label{eq:K-center-displacement}
 \KA(f)
 \le K(f)
 \le \KA(f)e^{d_{\D}(0,\cA(f))}.
\end{equation}
This makes the gauge dependence explicit.

\subsection{Support-type dichotomy}

The support theorem separates the smallest enclosing disk into two geometric types.  We formulate the classification for the compact set $E_f$.

\begin{definition}
An orientation-preserving harmonic mapping of uniformly bounded differential distortion is of \emph{two-support type} if the circumcenter of $E_f$ is determined by two contact points; it is of \emph{three-support type} if no pair of contact points determines the circumcenter.
\end{definition}

If $f$ is of two-support type, Theorem~\ref{thm:support} gives contact points $\xi_1,\xi_2$ with
\(
 d_{\D}(\xi_1,\xi_2)=2\rA(f),
\)
and therefore
\begin{equation}\label{eq:two-support-diam}
 \rA(f)=\frac{\dA(f)}2.
\end{equation}
Conversely, because $E_f$ is compact, its diameter is attained.  If $\dA(f)=2\rA(f)$ and $\xi,\eta\in E_f$ satisfy $d_{\D}(\xi,\eta)=\dA(f)$, then
\[
 2\rA(f)=d_{\D}(\xi,\eta)
 \le d_{\D}(\xi,\cA(f))+d_{\D}(\cA(f),\eta)
 \le2\rA(f).
\]
Equality throughout forces both points to be contact points and $\cA(f)$ to lie on the geodesic segment $[\xi,\eta]$; because the two endpoint distances are both $\rA(f)$, the center is their midpoint.  Hence
\begin{equation}\label{eq:lower-equality-classification}
 \rA(f)=\frac{\dA(f)}2
\Longleftrightarrow
 f\text{ is of two-support type}.
\end{equation}

Three-support type is the genuinely two-dimensional alternative.  The regular equilateral hyperbolic triangle realizes the largest possible radius for fixed diameter, but a general three-support configuration need not be equilateral.  Thus the interval
\[
 \frac D2\le R\le J(D)
\]
interpolates between a diametral two-point configuration and the equilateral extremal configuration.
\subsection{Convex-hull reduction}

By Lemmas~\ref{lem:rad-basic} and \ref{lem:diam-convex-hull}, we have
\begin{equation}\label{eq:convex-hull-invariants}
 \rA(f)
 =\rad_{\D}(\operatorname{hconv}E_f),
 \quad
 \dA(f)
 =\diam_{\D}(\operatorname{hconv}E_f).
\end{equation}
Thus all affine distortion information developed here depends only on the hyperbolic convex hull of the dilatation range.  Fine internal topology of $\omega_f(\D)$ is invisible to the first-order optimization problem.  This suggests that additional affine invariants should be sought among hyperbolic convex-geometric functionals of $E_f$, for example inradius, width, area of the convex hull, or capacities.  The work of Nasser-Rainio-Vuorinen~\cite{NasserRainioVuorinen2022} shows that hyperbolic diameter and Jung radius interact nontrivially with conformal capacity, making this a potentially useful direction.

\subsection{A quantitative characterization of affine analyticity}

Proposition~\ref{prop:zero-defect} says that $\dA=0$ is equivalent to affine analyticity.  The small-defect results make this stable.

\begin{proposition}
\label{prop:quant-affine-analytic}
Let $f$ be an orientation-preserving harmonic mapping with $\norm{\omega_f}_\infty<1$ and $\dA(f)=D$.  Then there exists an orientation-preserving target-affine mapping $A$ such that $F=A\circ f=H+\overline G$ satisfies
\begin{equation}\label{eq:quant-omega}
 \norm{G'/H'}_\infty
 \le\tanh\left[
 \frac12\arcsinh\!\left(
 \frac{2}{\sqrt3}\sinh\frac D2
 \right)
 \right].
\end{equation}
The right-hand side is sharp.  In particular,
\begin{equation}\label{eq:quant-omega-small}
 \norm{G'/H'}_\infty
 \le\frac{D}{2\sqrt3}+O(D^3).
\end{equation}
\end{proposition}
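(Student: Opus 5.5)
The plan is to take $A$ to be a balancing map from Theorem~\ref{thm:optimal}, for instance $A=A_c$ from \eqref{eq:Ac} with $c=\cA(f)$. Then $F=A\circ f$ is balanced, so Corollary~\ref{cor:balanced-k} gives
\[
 \norm{G'/H'}_\infty=\kA(f)=\tanh\frac{\rA(f)}2 .
\]

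Next, Theorem~\ref{thm:affine-jung} gives $\rA(f)\le J(\dA(f))=J(D)$. Since $t\mapsto\tanh(t/2)$ is increasing, this yields \eqref{eq:quant-omega} directly. One could use the canonical representative $\Bal[f]$ instead; it differs from $A_c\circ f$ only by a similarity, which merely rotates the dilatation, so the bound is the same.

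For sharpness, I will invoke the equilateral-triangle construction of Section~\ref{sec:examples}. Fix $D>0$, take $\omega_D$ a Riemann map onto the interior of a regular hyperbolic triangle of side $D$ compactly inside $\D$, and realize it via Lemma~\ref{lem:realization}. For that $f$ we have $\dA(f)=D$ and $\rA(f)=J(D)$.

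The point needing care is that sharpness must hold for \emph{every} admissible $A$, not only the balanced one. This follows from Theorem~\ref{thm:optimal}: for any $A$, $\norm{\omega_{A\circ f}}_\infty=\tanh(\log K(A\circ f)/2)\ge\tanh(\rA(f)/2)$. So no affine representative beats the right-hand side of \eqref{eq:quant-omega}, and the constant cannot be lowered. For $D=0$ both sides vanish, consistent with Proposition~\ref{prop:zero-defect}.

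Finally, the small-$D$ expansion \eqref{eq:quant-omega-small} follows from Corollary~\ref{cor:small-jung}, $J(D)=D/\sqrt3+O(D^3)$, together with $\tanh(x/2)=x/2+O(x^3)$. The only subtlety is that $J$ is odd in $D$ (an $\arcsinh$ of an odd function), so no $D^2$ term appears and the error is $O(D^3)$ as claimed. I expect no real obstacle; the most delicate point is stating sharpness correctly as the infimum over all affine gauges, which Theorem~\ref{thm:optimal} supplies.
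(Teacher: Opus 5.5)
Your proposal is correct and is essentially the paper's proof: balance $f$, apply Corollary~\ref{cor:balanced-k} together with the Jung bound of Theorem~\ref{thm:affine-jung}, take sharpness from the equilateral-triangle realization, and get the small-$D$ expansion from Corollary~\ref{cor:small-jung}. Your additional step, using Theorem~\ref{thm:optimal} to show that no affine gauge beats the bound for the extremal mapping, spells out a point the paper's proof leaves implicit.
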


\begin{proof}
Take $A$ to be a balancing affine mapping and use Theorem \ref{thm:affine-jung} and Corollary~\ref{cor:balanced-k}.  Sharpness follows from the equilateral triangle construction.
\end{proof}

This proposition may be viewed as an affine rigidity statement: small hyperbolic oscillation of the dilatation forces the mapping, after one global affine correction, to have uniformly small complex dilatation, i.e., a uniformly small anti-analytic derivative relative to the analytic derivative.
\vskip.20in
\section{Open problems and perspectives}\label{sec:open}

The affine radius and diameter lead to several questions that appear to be natural continuations of the classical ALIF program.

\begin{problem}
\label{prob:balanced-order}
Determine the exact balanced coefficient order of the affine quotient of $\SH$ under a fixed affine diameter constraint:
\[
 \alpha_\flat(D)
 =\sup\{|a_2(\Bal[f])|:f\in\SH,\ \dA(f)\le D\}.
\]
Find the extremal mappings and determine whether they are generated by a hyperbolically symmetric dilatation domain.
\end{problem}

The classical order problem for $\SH$ is closely related to the harmonic Koebe function and remains one of the central questions in the subject; see~\cite{ChuaquiHernandezMartin2017,ClunieSheilSmall1984,Duren2004}.  For recent investigations on harmonic quasiconformal mappings, see also Liu-Zhu \cite{LiuZhu2023}. 
The constrained quantity above separates the analytic coefficient growth from the affine oscillation of the dilatation and may be more rigid.

\begin{problem}
\label{prob:Schwarzian-shadow}
Find the sharp function $\mathcal S(k,M)$ such that every balanced harmonic mapping $f=H+\overline G$ with $\kA(f)\le k$ and $\norm{P_f}\le M$ satisfies
\(
 \norm{S_H-S_f}\le\mathcal S(k,M).
\)
Determine the optimal small-$k$ asymptotics.
\end{problem}

The explicit formula for $S_f$ in terms of $H$ and $\omega$ from~\cite{HernandezMartin2015} provides a natural starting point, but second derivatives of the dilatation enter and no second-order conclusion is asserted here.

\begin{problem}
\label{prob:higher-moments}
Develop affine-invariant distortion theory based on other hyperbolic convex-geometric functionals of $E_f$, such as width, inradius, area of $\operatorname{hconv}E_f$, or conformal capacity.  Determine which of these control geometric properties of $f(\D)$ more efficiently than $\rA$ and $\dA$ alone.
\end{problem}

\begin{problem}
\label{prob:boundary}
Let $f:\D\to\Omega$ be a harmonic quasiconformal homeomorphism onto a smooth or chord-arc Jordan domain.  Determine whether boundary Lipschitz, co-Lipschitz, or H\"older constants can be estimated naturally in terms of $\KA(f)$ or $\dA(f)$ rather than the gauge-dependent $K(f)$.
\end{problem}

There is a substantial literature on Lipschitz, bi-Lipschitz, and H\"older boundary behavior of harmonic quasiconformal mappings; representative results include Kalaj~\cite{Kalaj2008,Kalaj2012,Kalaj2015,Kalaj2022}.  The affine optimization suggests that some constants may admit stronger gauge-invariant formulations.

\begin{problem}
\label{prob:higher-dim}
Find an analogue of affine balancing for harmonic quasiconformal mappings in $\R^n$, $n\ge3$.  In the plane, the space of linear conformal structures is the hyperbolic disk and the complex dilatation provides a holomorphic coordinate.  In higher dimensions, the space of oriented linear conformal structures, after quotienting by scale, may be identified with the symmetric space
\(
 \mathrm{SL}(n,\R)/\mathrm{SO}(n),
\)
and the local distortion tensor replaces $\omega$.  Is the optimal target-affine distortion determined by a circumcenter problem in this nonpositively curved symmetric space, and under what hypotheses does this recover the quasiconformal constant of a harmonic homeomorphism?
\end{problem}

The last problem indicates that the planar theory may be the rank-one case of a more general metric projection principle for distortion tensors.  The CAT$(0)$ geometry used in Section~\ref{sec:stability} is already compatible with such an extension, although harmonicity no longer supplies a scalar holomorphic dilatation.
\vskip.20in
\section*{Concluding remarks}\label{sec:conclusion}

The maximal differential distortion of a harmonic mapping depends on the affine gauge because a target-affine mapping acts nontrivially on the complex dilatation; for a univalent harmonic mapping, this is the quasiconformal constant.  Instead of treating this as an obstruction to affine invariance, one can use the action itself: it is exactly the isometry group action of $\Aut(\D)$ on the Poincar\'e disk.  The smallest possible maximal differential distortion in an affine orbit is therefore a hyperbolic smallest-enclosing-ball problem.

This viewpoint produces several concrete consequences.  The affine circumradius is the logarithm of the optimal maximal differential distortion (hence of the optimal quasiconformal constant in the univalent category); the minimizer is unique modulo similarities; two or three support points in the closure of the dilatation range determine it; the sharp hyperbolic Jung theorem converts pairwise dilatation oscillation into a sharp affine distortion estimate; and the zero-diameter locus is exactly the class of affine images of analytic mappings.  Passing to the balanced representative gives a canonical section of the affine quotient and turns fixed affine diameter into a genuinely affine-linear invariant bounded-distortion constraint.

At the differential level, the scaled Schwarz-Pick lemma yields sharp pre-Schwarzian gap estimates within each affine gauge. Balancing contributes the optimization: it minimizes the dilatation norm and hence the parameter in the increasing sharp function $\mathcal C$.  For the canonical balanced representative \(F=\Bal[f]=H+\overline G,\) this gives
\[
 \norm{P_H-P_f}
 =\norm{P_H-P_F}
 \le\mathcal C(\kA(f)),
\]
with a quadratic small-defect asymptotic.  Thus the analytic part of the canonical balanced mapping is a quantitatively controlled analytic shadow of the harmonic affine orbit.

The resulting framework places classical analytic linear invariant families at the zero-defect endpoint and harmonic bounded-distortion families at positive affine diameter.  It also suggests a broader principle: when a geometric distortion parameter transforms by isometries of a nonpositively curved parameter space, the correct invariant normalization is often obtained by taking the circumcenter of its full range rather than normalizing it at one base point.
\vskip.20in
\appendix

\section{Auxiliary formulas and normalization conventions}\label{app:formulas}

For convenience, we collect several formulas used throughout the paper.

\subsection{From hyperbolic radius to quasiconformal constant}

If $r\in[0,1)$ and
\[
 R=d_{\D}(0,r)=2\arctanh r,
\]
then
\begin{equation*}\label{eq:appendix-exp}
 e^R
 =e^{2\arctanh r}
 =\frac{1+r}{1-r}.
\end{equation*}
Thus a centered hyperbolic ball of radius $R$ corresponds to the Euclidean disk of radius
\begin{equation*}\label{eq:appendix-tanh}
 r=\tanh\frac R2.
\end{equation*}
This is the reason for the definitions of $\kA$ and $\KA$ in \eqref{eq:kA-KA}.

\subsection{The affine mapping sending a center to zero}

Given $c\in\D$, the affine mapping
\[
 A_c(w)=w-\overline c\,\overline w
\]
has Jacobian determinant $1-|c|^2>0$.  If $f=h+\overline g$ has dilatation $\omega$, then
\[
 A_c\circ f
 =(h-\overline c\,g)
 +\overline{(g-ch)}.
\]
Hence
\begin{equation*}\label{eq:appendix-Ac-omega}
 \omega_{A_c\circ f}
 =\frac{\omega-c}{1-\overline c\,\omega}.
\end{equation*}
This is the standard disk automorphism carrying $c$ to $0$.

\subsection{Recovering a mapping from standard point normalization}

If $F=H+\overline G$ is normalized by $H'(0)=1$ and $b=G'(0)$, define
\[
 \widetilde F
 =\frac{F-\overline b\,\overline F}{1-|b|^2}.
\]
Then $\widetilde F_z(0)=1$ and $\widetilde F_{\overline z}(0)=0$.  Conversely,
\begin{equation}\label{eq:appendix-recover}
 F=\widetilde F+\overline b\,\overline{\widetilde F}.
\end{equation}
Indeed,
\begin{align*}
 (1-|b|^2)\widetilde F&=F-\overline b\,\overline F,\\
 (1-|b|^2)\overline{\widetilde F}&=\overline F-bF,
\end{align*}
and adding $\overline b$ times the second identity to the first gives \eqref{eq:appendix-recover}.

\subsection{A direct derivation of the pre-Schwarzian formula}

Since
\[
 J_f=|h'|^2(1-|\omega|^2),
\]
we have
\[
 \log J_f
 =\log h'+\log\overline{h'}+\log(1-|\omega|^2),
\]
locally after choosing a branch of $\log h'$.  Taking the $z$ derivative gives
\[
 (\log J_f)_z
 =\frac{h''}{h'}-
 \frac{\omega'\overline\omega}{1-|\omega|^2},
\]
which is \eqref{eq:harmonic-P}.  Under a target-affine mapping $A$, the Jacobian is multiplied by the positive constant $|a|^2-|b|^2$, so
\(
 P_{A\circ f}=P_f.
\)
This gives a short proof of target-affine invariance of the harmonic pre-Schwarzian.

\subsection{Why the Jung constant has the stated form}

For a regular equilateral hyperbolic triangle of side length $D$ and circumradius $R$, join the center to a vertex and to the midpoint of an adjacent side.  The resulting right triangle has hypotenuse $R$, side opposite angle $\pi/3$ equal to $D/2$, and hence the hyperbolic right-triangle sine rule gives
\[
 \sinh\frac{D}{2}
 =\frac{\sqrt3}{2}\sinh R.
\]
Therefore,
\[
 R=\arcsinh\!\left(\frac2{\sqrt3}\sinh\frac D2\right)=J(D).
\]
Dekster's theorem~\cite{Dekster1995} asserts that no set of diameter $D$ in the hyperbolic plane has larger circumradius.








\vskip .10in
	\noindent{\bf  Acknowledgements.}
Z.-G. Wang was partially supported by the \textit{Key Project of Education Department of Hunan Province} under Grant no. 25A0668, and
the \textit{Natural Science Foundation of Changsha} under Grant no. kq2502003
of the P. R. China. D. Zhong was partially supported by the \textit{Guangdong Basic and Applied Basic Research Foundation} under Grant nos. 2022A1515110967 and 2023A1515011809 of the P. R. China. 

\vskip .10in
\noindent{\bf
Contribution statement.}
All authors contributed equally to this work.

\vskip .10in
\noindent{\bf Conflicts of interest.} The authors declare that they have no conflict of interest.

\vskip .10in
\noindent{\bf Data availability statement.}  Data sharing is not applicable to this article as no datasets were generated or analysed during the current study.

\end{document}